\newtheorem{theorem}{Theorem}
\newtheorem{remark}{Remark}%
\begin{document}

\title[Measurability and continuity of parametric low-rank approximation]{Measurability and continuity of parametric low-rank approximation in Hilbert spaces: linear operators and random variables}


\author*[1]{\fnm{Nicola Rares} \sur{Franco}}\email{nicolarares.franco@polimi.it}

\affil*[1]{\orgdiv{MOX, Department of Mathematics}, \orgname{Politecnico di Milano}, \orgaddress{\street{P.zza Leonardo da Vinci 32}, \city{Milan}, \postcode{20133}, \state{Italy}}}

\abstract{
We present a unified theoretical framework for parametric low-rank approximation, 
\review{a research area devoted to the development of} efficient algorithms that act as adaptive alternatives of traditional methods such as Singular Value Decomposition (SVD), Proper Orthogonal Decomposition (POD), and Principal Component Analysis (PCA).
Applications include, e.g., the numerical treatment of parameter-dependent partial differential equations, where operators vary with parameters, and the statistical analysis of longitudinal data, where complex measurements, like audio signals and images, are collected over time. Recently, several adaptive algorithms have emerged, but a common mathematical foundation is still lacking, and existing solutions remain constrained to specific applications. As a result, key theoretical questions—--such as the existence and  regularity of optimal parametric low-rank approximants—--remain inadequately addressed. Our goal is to bridge this gap between theory and practice by establishing a rigorous framework for parametric low-rank approximation under minimal assumptions, specifically focusing on cases where parameterizations are either measurable or continuous. The analysis is carried out within the context of separable Hilbert spaces, ensuring applicability to both finite and infinite-dimensional settings. Finally, connections to recently emerging trends in the Deep Learning literature, relevant for engineering and data science, are also discussed.
}

\keywords{Low rank, Parametric, SVD, POD, PCA}


\newtheorem{lemma}{Lemma}[section]
\newtheorem{corollary}{Corollary}[section]
\newcommand{\blabla}{\textit{Work in progress...}}
\newcommand{\gparam}{\xi}
\newcommand{\gspace}{\Xi}
\newcommand{\rank}{\textnormal{rank}}

\newcommand{\review}[1]{#1}
\newcommand{\latest}[1]{#1}

\newcommand{\betab}{\boldsymbol{\beta}}
\newcommand{\sqrtt}[1]{\left(#1\right)^{1/2}}
\newcommand{\omegafp}{(\mathbb{S},\mathcal{A},\mathbb{P})}
\newcommand{\bl}{\mathscr{BL}}
\newcommand{\expe}{\mathbb{E}}
\newcommand{\psd}{\mathsf{PSD}}
\newcommand{\scl}[2]{\langle#1,#2\rangle}
\newcommand{\wto}{\rightharpoonup}
\newcommand{\trace}{\mathscr{T}}
\newcommand{\hilbs}{\mathscr{H}}
\newcommand{\cmpts}{\mathscr{K}}
\newcommand{\schatten}{\mathscr{S}}
\newcommand{\opnorm}[1]{{\left\vert\kern-0.25ex\left\vert\kern-0.25ex\left\vert #1
    \right\vert\kern-0.25ex\right\vert\kern-0.25ex\right\vert}}
\newcommand{\opnormsmall}[1]{{\vert\kern-0.25ex\vert\kern-0.25ex\vert #1
    \vert\kern-0.25ex\vert\kern-0.25ex\vert}}
\newcommand{\hsnorm}[1]{\|#1\|_{\textnormal{HS}}}

\maketitle

\section{Introduction}

Techniques for low-rank approximation are ubiquitous in many areas of applied mathematics, from engineering, numerical analysis and linear algebra, where they are commonly employed to enhance the efficiency of numerical algorithms, to statistics and data science, where they offer reliable approaches for data compression and noise reduction. In this sense, low-rank approximation is a very broad term that can be related to a multitude of different problems. Here, we shall focus on two particular cases of primary importance for many applications.

The first one concerns the low-rank approximation of linear operators. For instance, when the ambient dimension is finite, given a matrix $\mathbf{A}\in\mathbb{R}^{N\times N}$, low-rank approximation techniques aim at finding a suitable surrogate, $\mathbf{A}_n\approx\mathbf{A}$ such that
$$\rank(\mathbf{A}_n)\ll\rank(\mathbf{A}).$$
A classical approach is to leverage the so-called Singular Value Decomposition (SVD). The latter is based on the idea that any matrix $\mathbf{A}\in\mathbb{R}^{N\times N}$ can be decomposed as
$$\mathbf{A}=\mathbf{V}\boldsymbol{\Sigma}\mathbf{U}^\top,$$
where $\mathbf{U},\mathbf{V}\in\mathbb{R}^{N\times r}$ are orthonormal matrices, whereas $\boldsymbol{\Sigma}=\text{diag}(\sigma_1,\dots,\sigma_r)\in\mathbb{R}^{r\times r}$ is diagonal, with entries sorted such that $\sigma_1\ge\dots\sigma_r\ge0$. Here, $r:=\rank(\mathbf{A})$. Then, for any $n < r$, a suitable low-rank approximant can be found by truncating the SVD as
$$\mathbf{A}_n:=\mathbf{V}_n\boldsymbol{\Sigma}_n\mathbf{U}_n^\top,$$
where, $\mathbf{U}_n$ and $\mathbf{V}_n$ are obtained by neglecting the last $r-n$ columns of $\mathbf{U}$ and $\mathbf{V}$, respectively. Similarly, $\boldsymbol{\Sigma}_n:=\text{diag}(\sigma_1,\dots,\sigma_n).$ This approximation can be shown to be optimal in both the spectral and the Frobenius norm \cite{eckart1936approximation}. 
Under suitable assumptions, the same ideas can be extended to the \review{case of compact operators in infinite-dimensional Hilbert spaces}. 

\review{The second application of interest, instead, concerns the problem of} 
data compression, or, equivalently, dimensionality reduction for random variables in high-dimensional settings. For instance, given an $N$-dimensional random vector $X$, one might be interested in finding a lower-dimensional representation of $X$, denoted as $\beta$, of dimension $n\ll N$. To this end, a classical approach consists in finding a suitable basis $\mathbf{V}\in\mathbb{R}^{N\times n}$, such that
$$X\approx \mathbf{V}\beta.$$
Here, the matrix $\mathbf{V}$ is deterministic, whereas the vector of coefficients, $\beta$, is random and it serves the purpose of modeling the stochasticity in $X$. In the literature, a popular algorithm for this task is the so-called Proper Orthogonal Decomposition (POD) \cite{quarteroni2015reduced}. Simply put, the latter looks for the matrix $\mathbf{V}$ that minimizes the mean squared projection error,
$$\mathbb{E}\|X-\mathbf{V}\mathbf{V}^\top X\|^2,$$
where $\mathbb{E}$ denotes the expectation operator. Then, the lower-dimensional representation is defined as $\beta:=\mathbf{V}^\top X$. The solution to such problem is known in closed form, and it ultimately involves finding a low-rank approximation of the (uncentered) covariance matrix $\mathbf{C}=(c_{i,j})_{i,j=1}^{N}$, 
$$c_{i,j}:=\mathbb{E}[X_iX_j],$$
where $X_i$ represents the $i$th component of the random vector $X$. We mention that, in the context of statistical applications, POD is usually replaced with a similar algorithm called Principal Component Analysis (PCA) \cite{jolliffe1990principal}. In this case, the random vector $X=(X_1,\dots,X_N)$ is first standardized as $\tilde{X}=(\tilde{X}_1,\dots,\tilde{X}_N)$, where either\footnote{We adopt the common convention according to which $\mathbb{E}^{1/2}[\;\cdot\;]:=\sqrt{\mathbb{E}[\;\cdot\;].}$}
$$\tilde{X}_i:=\frac{X_i-\mathbb{E}[X_i]}{\mathbb{E}^{1/2}|X_i-\mathbb{E}[X_i]|^2},$$
or $\tilde{X}_i:= X_i-\mathbb{E}[X_i]$, depending on the problem at hand. Then, a POD is performed over the standardized vector $\tilde{X}$.
%
%
As in the case of linear operators, both POD and PCA have a natural generalization to the case of Hilbert-valued random variables, where $X$ can be infinite-dimensional: see, e.g., \cite{singler2014new}, Functional Principal Component Analysis \cite{ramsay2005principal, horvath2012inference} and the Kosambi-Karhunen-Loève expansion \cite{kosambi2016statistics, karhunen1947under}.

\subsection{\bf Parameter dependent problems}
While classical low-rank approximation techniques are very well understood, both theoretically and practically, things become more subtle when we move to parametrized scenarios, which, however, are of remarkable importance. For instance, in parameter-dependent partial differential equations (PDEs), it is common to encounter linear operators $A = A_\gparam$ that depend on a specific parameter $\gparam$. We can think of, e.g., the volatility coefficient in a diffusion process, or the viscosity coefficient in a fluid-flow simulation. Likewise, in the discrete setting, one frequently encounters parameter-dependent matrices $\mathbf{A} = \mathbf{A}_\gparam$. Of note, this also includes the case of nonautonomous dynamical systems, where the time variable $t$ acts as the parameter for the evolution operator, meaning that $\gparam = t$: see, e.g., \cite{musharbash2015error}.

Parametric dependency also arises quite naturally when considering high-dimensional random variables. For instance, aside from the case of longitudinal data (time varying), a typical application is that of \textit{conditional observations}. Assume, e.g., that $X$ is a random vector in $\mathbb{R}^N$, whereas $Y$ is a random variable taking values in a suitable set $\gspace$. Then, if $X$ and $Y$ are observed jointly, one can be interested in the conditional distribution of $X$ given $Y$, which naturally brings one to consider the family of random vectors $\{X_\gparam\}_{\gparam\in\gspace}$ defined as
$$X_\gparam:=X\mid Y = \gparam.$$
Equivalently, 
one can think of $Y=\gparam$ as a contextual variable parametrizing 
$X_\gparam$: see, e.g., \cite{gupta2018parameterized}.
\\\\
In both scenarios, practical applications, such as uncertainty quantification, optimal control and precision medicine, which are characterized by the necessity of exploring the parameter space, may demand for \textit{parametric} low-rank approximation. 
In principle, this issue could be tackled using the previously mentioned techniques (SVD and POD) by repeating the computation for every $\gparam\in\gspace$. While this method would provide optimal low-rank approximations, the associated computational cost can quickly become prohibitive, thus bringing domain practitioners to look for alternative approaches. Mathematically speaking, this challenge translates into the search for effective surrogates that can emulate the performance of the maps
\begin{equation}
    \label{eq:ideal-optimum}
    \gspace\ni\gparam\mapsto\mathbf{A}_{n}(\gparam)\in\mathbb{R}^{N\times n}\quad\text{and}\quad\gspace\ni\gparam\mapsto\mathbf{V}(\gparam)\in\mathbb{R}^{N\times n},
\end{equation}
corresponding to parameter-wise SVD and POD, respectively\footnote{Note: in practice, the rigorous definition of these maps may require user-defined preferences. In fact, depending on multiplicities, SVD and POD truncations may fail to be unique.} (hereon also referred to as \emph{parametric SVD} and \emph{parametric POD});  two maps that are optimal in theory but prohibitively expensive to compute in practice.
%
%
As of today, several approaches have emerged in this direction. For instance, in the case of linear operators, adaptive versions of SVD and related methods have been proposed in \cite{bunse1991numerical, berman2024colora}. Similarly, there has also been an increasing interest in deriving conditional/parametric versions of PCA and POD, as well as dynamical ones: see, e.g., \cite{cardot2007conditional, gupta2018parameterized, amsallem2008interpolation,  amsallem2012nonlinear, boncoraglio2021active, boncoraglio2021model, franco2024deep, koch2007dynamical, sapsis2009dynamically,  peherstorfer2020model}.  However, 
driven by specific applications, these approaches fail to recognize the existence of a common mathematical structure, and, most importantly, do not consider the fact that adaptive approaches are ultimately approximations of the optimal algorithms in Eq. \eqref{eq:ideal-optimum}, whose approximability is dictated by their parametric regularity. To the best of our knowledge, these issues were only partially addressed in the specific case of analytic parametrizations featuring a single scalar parameter $\gparam$ (allowed to be either real or complex): see, e.g., \cite{bunse1991numerical}. 
In fact, this scenario is of fundamental importance in the so-called Perturbation Theory, of which a comprehensive overview is found in the celebrated book by T. Kato \cite{kato2013perturbation}. However, things quickly become more complicated when additional parameters are introduced, or when the regularity assumptions are weakened, cf. \cite{turner1969perturbation, kato2013perturbation}.


In this sense, our work aims to take a step further by establishing a common theoretical foundation that can withstand minimal assumptions. Specifically, we will examine fundamental regularity properties ---parametric measurability and continuity--- of the ideal algorithms described in Eq. \eqref{eq:ideal-optimum}, and explore their implications for practical applications involving parametric low-rank approximation.
This will concern both the case of linear operators and that of high-dimensional random variables. To ensure a broader applicability of our results, we frame our analysis within the context of separable Hilbert spaces, thereby addressing both infinite and finite-dimensional settings. In general, the ideas explored in this work will necessitate of basic results from Set-Valued Analysis, Functional Analysis, and Operator Theory, of which the reader can find a suitable reference in \cite{aubin2009set}, \cite{yosida2012functional} and \cite{conway2019course}, respectively.

\subsection{\bf Outline of the paper}
The paper is organized as follows. First, in Section~\ref{sec:preliminaries}, we \review{introduce some notation and recall some preliminary results on low-rank approximation and parametric minimization problems.}
Things are then put into action in Section~\ref{sec:results}, where we address the problem of parametric low-rank approximation under minimal regularity assumptions; in terms of novelty, this is our main contribution. Finally, in Section~\ref{sec:corollaries}, \review{we explore the practical implications of our findings, focusing on numerical algorithms that are based upon universal approximators—such as deep neural networks. In doing so, we also present numerical experiments that support and illustrate our conclusions.}
Concluding remarks are reported in Section~\ref{sec:conclusions}. For the sake of better readability, technical proofs and supplemental results are postponed to the Appendix.

\section{Preliminaries and notation}
\label{sec:preliminaries}

In this Section we provide a synthetic overview of the fundamental concepts and notions required to properly address the problem of low-rank approximation in Hilbert spaces. 
Specifically, we take the chance to introduce some notation and present the general ideas behind two fundamental algorithms, SVD and POD, that are commonly employed in nonparametric settings. \review{Then, at the end of this Section, we recall some useful facts about parametrized minimization problems, which are propedeutical to our derivation.}

\paragraph{\review{Notation for compact operators}}

Given \review{two separable Hilbert spaces  $(H_1,\|\cdot\|_{H_1})$ and $(H_2,\|\cdot\|_{H_2})$, we denote by $\bl(H_1,H_2)$ the space of bounded linear operators from $H_1$ to $H_2$, equipped with the operator norm
$$\opnorm{T}:=\sup_{x\in B_{H_1}}\|T(x)\|_{H_2},$$
where $B_{H_1}=\{x\in H_1\;:\;\|x\|_{H_1}\le1\}$ is the unit ball in $H_1$. Given any $T\in \bl(H_1,H_2)$, we define the \textit{rank} of $T$, and we write $\rank(T)$, for the dimension of the image $T(H_1)\subseteq H_2$. We write  $\cmpts(H_1,H_2)$ to intend the subspace of $\bl(H_1,H_2)$ consisting of \textit{compact operators}, namely,
$$\cmpts(H_1,H_2):=\overline{\{T\in\bl(H_1,H_2)\;:\;\rank(T)<+\infty\}}^{\opnorm{\cdot}},$$
cf. \cite[Theorem VI.13]{reed1980methods}.
Following the characterization by  Allahverdiev, see \cite[Theorem 2.1]{gohberg1978introduction}, given any $A\in\cmpts(H_1,H_2)$, we define its $n$th singular value, and we write $\sigma_n(A)$, as
\begin{equation}
    \label{eq:svalues}
    \sigma_{n}(A):=\inf\left\{\opnorm{A-L}\;:\;L\in\cmpts(H_1,H_2),\;\rank(L)\le n-1\right\}.
\end{equation}
It is well-known that for all $A\in\cmpts(H_1,H_2)$ the sequence $\{\sigma_{n}(A)\}_{n=1}^{+\infty}$ is bounded, nonincreasing and vanishing to 0 for $n\to+\infty$. Furthermore, $\opnorm{A}=\sigma_1(A).$ Stronger properties involving the $p$-summability of the singular values, instead, give rise to \textit{Schatten class} operators.  Specifically, for all $1\le p<+\infty$, we set
$$\schatten_p(H_1,H_2):=\left\{A\in\cmpts(H_1,H_2)\;:\;\sum_{n=1}^{+\infty}\sigma_n(A)^{p}<+\infty\right\}\subset \cmpts(H_1,H_2),$$
which, we recall, are all separable Banach spaces under the \latest{Schatten} $p$-norm \cite{conway2019course}
$\|A\|_p:=\left(\sum_{n=1}^{+\infty}\sigma_n(A)^{p}\right)^{1/p}.$}
\review{Whenever $H_1=H_2=H$, we also introduce a special notation for the case of \emph{trace class} and \emph{Hilbert-Schmidt} operators,
$$\trace(H):=\schatten_1(H,H)\quad\text{and}\quad\hilbs(H):=\schatten_2(H,H),$$
respectively.}
We recall that, for all trace class operators $A\in\review{\trace(H)}$, one has $\sum_{i=1}^{+\infty}|\scl{Ae_i}{e_i}|<+\infty$ for \latest{any} orthonormal basis $\{e_i\}_i\subset H$.
Furthermore, \latest{the} \textit{trace} of the operator $A$, i.e.
$\mathsf{Tr}(A):=\sum_{i=1}^{+\infty}\scl{Ae_i}{e_i},$ solely depends on $A$ and it is actually independent of the basis $\{e_i\}_i$, cf. \cite{conway2019course}.

\review{$\hilbs(H)$ 
is a Hilbert space under the inner product}
$$\scl{A}{B}_\text{HS}:=\mathsf{Tr}(A^*B)=\mathsf{Tr}(B^*A).$$
Here, $A^*$ denotes the \emph{adjoint} of $A$, that is, the unique operator satisfying $\scl{Ax}{y}=\scl{x}{A^*y}$ for all $x,y\in H$. It is well known that $A$ and $A^*$ share the same singular values \review{\cite[Chapter V.3]{kato2013perturbation}}; in particular, they belong to the same Schatten class . \review{The same holds even if $H_1\neq H_2.$}



\paragraph{\review{Notation for Hilbert-valued random variables}}

Let $(H,\|\cdot\|)$ be a separable Hilbert space. Given a separable probability space $\omegafp$, where $\mathbb{S}$ is the sample space, $\mathcal{A}$ a given sigma field, and $\mathbb{P}$ a probability distribution, \review{we define} an $H$-valued random variable \review{to be} a Borel measurable map\footnote{Hereon, the existence of the probability space $(\mathbb{S},\mathcal{A},\mathbb{P})$ will be omitted: we will simply say that $X$ is an $H$-valued random variable, without specifying the underlying probability space (which, for simplicity, we always assume to be complete). Clearly, when considering $L^p_H$ spaces, the probability space is intended to be the same for all $H$-valued random variables under study.} $$X:\omegafp\to(H,\|\cdot\|).$$
For any exponent $p\in[1,+\infty)$, we denote by
$$L^p_H:=\left\{X\;H\textnormal{-valued r.v.}\;:\;\mathbb{E}\|X\|^p<+\infty\right\},$$
the \review{Bochner spaces of $p$-integrable $H$-valued random variables}, where $\mathbb{E}$ is the expectation operator, defined with respect to $\mathbb{P}.$ \review{Up to quotients,} the latter are all separable Banach spaces under the norm
$$\|X\|_{L^p_H}:=\left(\mathbb{E}\|X\|^p\right)^{1/p},$$
\review{Given $X\in L^1_H$, we denote by $\mathbb{E}[X]$ the unique element in $H$ satisfying
$\scl{\mathbb{E}[X]}{z}=\mathbb{E}[\scl{X}{z}]$ for all $z\in H,$
that is, the Bochner integral of $X$ \cite[Chapter V.5]{yosida2012functional}.}

\subsection{Fundamentals of low-rank approximation}
\review{We now recall some fundamental results in the theory of low-rank approximation, starting with the case of linear operators and then moving to Hilbert-valued random variables.}

\subsubsection{\review{Linear operators: the Singular Value Decomposition (SVD)}}

The SVD, also known as \emph{polar form}, or \emph{canonical form} of compact operators, is a powerful tool that allows one to represent compact operators as an infinite series (resp., sum, if the operator has finite rank) of rank-1 operators: see, e.g., \cite[Theorem VI.17]{reed1980methods}. Precisely, if \review{$A:H_1\to H_2$} is a compact operator, then one has the representation formula
\begin{equation}
    \label{eq:svd}
    A=\sum_{i=1}^{r} s_i\scl{\cdot}{u_i}_{H_1}v_i,
\end{equation}
where $r=\rank(A)\in\mathbb{N}\cup\{+\infty\}$, 
for suitable $s_1\ge s_2\ge\dots\ge0$, and orthonormal sets \review{$\{u_i\}_i\subset H_1$ and $\{v_i\}_i\subset H_2.$} Notably, $s_i=\sigma_i(A)$ are the singular values of $A$. The vectors $u_i$ and $v_i$ are often called the left and right singular vectors of $A$, respectively. If $A$ is self-adjoint, then $u_i=v_i$; in particular, singular values and singular vectors coincide with the notion of eigenvalues and eigenvectors \review{\cite[Chapter V.3]{kato2013perturbation}.}

From the perspective of low-rank approximation, SVD is extremely useful as it allows to easily identify optimal low-rank approximants. In fact, one can prove that for every $n\le r$, the best $n$-rank approximation of $A$ is given by truncating the series in Eq. \eqref{eq:svd} at $i=n.$ For later reference, we formalize this fact in the Lemma right below, which is ultimately a more abstract version of the well-known Eckart-Young Lemma \cite{eckart1936approximation}. \review{Notably, while the representation formula \eqref{eq:svd} is commonly found in standard textbooks—see, for example, \cite[Chapter V.3]{kato2013perturbation} or \cite[Exercise VI.47]{reed1980methods}—discussions on the optimality and uniqueness of the singular value decomposition (SVD) are typically restricted to the finite-dimensional setting, as in \cite{eckart1936approximation}, and are rarely addressed in the broader context of separable Hilbert spaces. To bridge this gap, we provide the interested reader with a complementary proof of the Lemma presented below: see Appendix~\ref{appendix:classical-proofs}.}\\

\begin{lemma}
    \label{lemma:svd}
    Let \review{$(H_1,\|\cdot\|_{H_1})$ and $(H_2,\|\cdot\|_{H_2})$ be two separable Hilbert spaces. Let $A\in\cmpts(H_1, H_2)$}. There exist two orthonormal sequences, \review{$\{u_i\}_{i=1}^{+\infty}\subset H_1$ and $\{v_i\}_{i=1}^{+\infty}\subset H_2$}, respectively, such that
    $A=\sum_{i=1}^{+\infty}\sigma_i(A)\scl{\cdot}{u_i}_{H_1}v_i.$
    Additionally, for every $n\in\mathbb{N}_+$, the operator $A_n:=\sum_{i=1}^{n}\sigma_i(A)\scl{\cdot}{u_i}_{H_1}v_i$ satisfies
    \begin{equation}
    \label{eq:spectral-minimizer}
        \opnorm{A-A_n}=\inf_{\substack{L\in\cmpts(H)\\\rank(L)\le n}}\;\opnorm{A-L}=\sigma_{n+1}(A),
    \end{equation}
    and
    \begin{equation}
    \label{eq:hs-minimizer}
    \hsnorm{A-A_n}^2=\inf_{\substack{L\in\cmpts(H)\\\rank(L)\le n}}\;\hsnorm{A-L}^2=\sum_{i=n+1}^{+\infty}\sigma_i(A)^2.\end{equation}
    Furthermore, if \review{$A\in\hilbs(H_1, H_2)$} and $\sigma_{n+1}(A)<\sigma_n(A)$, then $A_n$ is the unique minimizer of \eqref{eq:hs-minimizer}.
\end{lemma}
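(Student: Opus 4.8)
The plan is to prove Lemma \ref{lemma:svd} in four stages: (i) the SVD representation, (ii) the spectral-norm optimality \eqref{eq:spectral-minimizer}, (iii) the Hilbert--Schmidt optimality \eqref{eq:hs-minimizer}, and (iv) uniqueness of the HS minimizer under the gap condition. For (i), I would invoke the polar/canonical decomposition of compact operators as recalled in the text just above (cf. \cite[Theorem VI.17]{reed1980methods}), observing that since $H$ is separable the two orthonormal sets $\{u_i\}$, $\{v_i\}$ can be extended (padding with zero coefficients if $\rank(A)<\infty$) to genuine orthonormal sequences indexed by $\mathbb{N}_+$, so the series is well-defined and converges in operator norm.

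For (ii), the identity $\opnorm{A-A_n}=\sigma_{n+1}(A)$ is immediate from the SVD: $A-A_n=\sum_{i\ge n+1}\sigma_i(A)\scl{\cdot}{u_i}v_i$, whose largest singular value is $\sigma_{n+1}(A)$, so $\opnorm{A-A_n}=\sigma_1(A-A_n)=\sigma_{n+1}(A)$. The matching lower bound $\opnorm{A-L}\ge\sigma_{n+1}(A)$ for every $L$ with $\rank(L)\le n$ is exactly the Allahverdiev characterization of $\sigma_{n+1}(A)$ quoted in the preliminaries, so this direction is essentially a definition chase. For (iii), I would first note that if $A\notin\hilbs(H)$ then $\sum_{i>n}\sigma_i(A)^2=+\infty$ and \eqref{eq:hs-minimizer} holds trivially (both sides infinite), so assume $A\in\hilbs(H)$. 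Then $\hsnorm{A-A_n}^2=\sum_{i>n}\sigma_i(A)^2$ by Parseval applied to the orthonormal system $\{\scl{\cdot}{u_i}v_i\}$ in $\hilbs(H)$. For the lower bound, given $L$ with $\rank(L)\le n$, I would use the variational/Ky Fan type inequality $\sum_{i=1}^{k}\sigma_i(A-L)\text{-type arguments}$; more cleanly, I would expand $\hsnorm{A-L}^2=\hsnorm{A}^2-2\,\mathrm{Re}\,\scl{A}{L}_{\mathrm{HS}}+\hsnorm{L}^2$ and argue that, writing $P$ for the orthogonal projection onto $\overline{\mathrm{ran}(L)}+\overline{\mathrm{ran}(L^*)}$-type finite-dimensional subspaces, the best rank-$n$ approximation is realized by an orthogonal projection, reducing the problem to: among orthogonal projections $Q$ of rank $\le n$, minimize $\hsnorm{A-QA}^2 = \hsnorm{A}^2 - \hsnorm{QA}^2$, i.e. maximize $\hsnorm{QA}^2=\sum_j \|A^*Qe_j\|^2$; this is maximized by projecting onto $\mathrm{span}\{v_1,\dots,v_n\}$ by the Courant--Fischer / Ky Fan characterization of partial sums of eigenvalues of $AA^*$.

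For (iv), assume $A\in\hilbs(H)$ and $\sigma_{n+1}(A)<\sigma_n(A)$, and suppose $L$ with $\rank(L)\le n$ also attains the infimum in \eqref{eq:hs-minimizer}. From the reduction in (iii), the minimizer must be of the form $L=QA$ with $Q$ an orthogonal projection of rank exactly $n$ maximizing $\sum_j\|QAv_j\|^2$ (equivalently $\mathrm{Tr}(QAA^*Q)$), and the strict spectral gap $\sigma_{n+1}(A)^2<\sigma_n(A)^2$ forces the maximizing $n$-dimensional invariant subspace of $AA^*$ to be exactly $\mathrm{span}\{v_1,\dots,v_n\}$ — this is the standard uniqueness argument for top-$n$ eigenspaces of a self-adjoint compact operator with a gap after the $n$-th eigenvalue (any optimal $Q$ must commute with $AA^*$ on the relevant spectral subspaces, and the gap rules out mixing in $\sigma_{n+1}$-directions). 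Hence $QA=A_n$.

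The main obstacle I anticipate is stage (iii), specifically the lower bound $\hsnorm{A-L}^2\ge\sum_{i>n}\sigma_i(A)^2$ in the infinite-dimensional setting: one must justify the reduction to orthogonal projections (Schmidt's theorem / the fact that $\mathrm{argmin}$ over rank-$\le n$ operators is attained by $QA$ where $Q$ projects onto an $n$-dimensional subspace) and then the eigenvalue-majorization inequality for partial sums, all while handling the possibly infinite spectrum carefully (truncating to a large finite block and passing to the limit is the cleanest route). Stages (i), (ii) are essentially bookkeeping on top of results already quoted, and (iv) is a routine gap argument once (iii)'s structure is in hand. Given the paper defers this to \ref{appendix:classical-proofs}, I would structure the appendix proof around the projection reduction plus a finite-rank truncation argument, citing \cite{gohberg1978introduction} or \cite{conway2019course} for the underlying operator-theoretic facts.
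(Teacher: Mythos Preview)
Your proposal is correct; parts (i)--(ii) match the paper, while for (iii)--(iv) you take a genuinely different route. The paper obtains the HS lower bound via a Von Neumann trace inequality $\mathsf{Tr}(AB^*)\le\sum_i\sigma_i(A)\sigma_i(B)$ (cited from \cite{dirr2019neumann}), which yields $\hsnorm{A-B}^2\ge\sum_i|\sigma_i(A)-\sigma_i(B)|^2$ and hence the bound when $\rank(B)\le n$; for uniqueness it then shows any minimizer $B$ must share the first $n$ singular values of $A$, reduces to maximizing $\mathsf{Tr}(B^*A)$, and concludes via an auxiliary eigenvalue bound (Corollary~\ref{corollary:Auj}) together with uniform convexity of $\hilbs$. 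Your route instead reduces to projections: with $Q$ the orthogonal projection onto $\mathrm{ran}(L)$, the Pythagorean split $\hsnorm{A-L}^2=\hsnorm{(I-Q)A}^2+\hsnorm{QA-L}^2$ (the two summands have ranges in $V^\perp$ and $V$ respectively, hence are HS-orthogonal) immediately gives $\hsnorm{A-L}^2\ge\hsnorm{A-QA}^2$, and Ky~Fan then handles both the maximization of $\mathsf{Tr}(QAA^*)$ over rank-$n$ projections and its uniqueness under the gap. Your argument is more self-contained (no external trace inequality needed), but state the projection step cleanly --- the phrase ``$\overline{\mathrm{ran}(L)}+\overline{\mathrm{ran}(L^*)}$-type subspaces'' is muddled; just take $Q$ onto $\mathrm{ran}(L)$. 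The finite-block truncation you anticipate is also unnecessary: since $AA^*$ is trace class when $A\in\hilbs$, the Ky~Fan maximization works directly in infinite dimensions.
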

\begin{proof}
    See Appendix~\ref{appendix:classical-proofs}.
\end{proof}

\subsubsection{\review{Random variables: the Proper Orthogonal Decomposition (POD)}}

As we mentioned in the introduction, POD is popular technique for reducing the complexity of high-dimensional random variables. From an abstract point of view, the construction underpinning the POD is based on a truncated series expansion. The idea, in fact, is that all square-integrable Hilbert-valued random variable $X$ admit a series representation of the form
\begin{equation}
\label{eq:pod expansion}
X=\sum_{i=1}^{+\infty}\sqrt{\lambda_i}\eta_i v_i,
\end{equation}
where $\lambda_1\ge\lambda_2\ge\dots\ge0$ is a nonincreasing sequence of positive scalar numbers, $\{v_i\}_{i=1}^{+\infty}$ is an orthonormal basis of the Hilbert state space $H$, whereas $\{\eta_i\}_{i=1}^{+\infty}$ is an $\mathbb{E}$-orthonormal sequence of scalar valued random variables, meaning that $\mathbb{E}[\eta_i\eta_j]=\delta_{i,j}$. We mention that, if $H=L^{2}(\Omega)$ for a suitable spatial domain $\Omega\subseteq\mathbb{R}^{d}$, then $X$ can be considered a \textit{random field}, and the series expansion is often referred to as "Kosambi-Karhunen-Loève expansion", \review{in relation to the works by Karhunen K. \cite{karhunen1947lineare}, Loève M. \cite{loeve1978probability} and Kosambi D.\cite{kosambi2016statistics}, which originally addressed this problem in the framework of function spaces. Concerning early works with a more abstract treatment, instead, we mention \cite{payen1967fonctions}, which directly addressed the case of Hilbert spaces.}

It is worth highlighing the fact that, although one has $X=\sum_{i=1}^{+\infty}\scl{X}{w_i}w_i$ 
almost surely for any orthonormal basis $\{w_i\}_i\subset H$,  optimal representations are only obtained for special choices of the basis vectors (which, ultimately, depend on $X$). In fact, in most cases the random variables $\omega_i=\scl{X}{w_i}$ would yield $\mathbb{E}[\omega_i\omega_j]\neq0$. In particular, if $\mathbb{E}[X]=0$, this would result in a statistical correlation between the coefficients in the series expansion.

As a matter of fact, it is this avoidance of redundances that makes the \review{\eqref{eq:pod expansion} remarkably useful for low-rank approximation \cite{payen1967fonctions}}. In fact, given a reduced dimension $n$, one can prove the following optimality of the truncated series expansion (and, thus, of the POD),
 $$\mathbb{E}\left\|X-\sum_{i=1}^{n}\sqrt{\lambda_i}\eta_i v_i\right\|^2=\inf_{Z\in Q_n}\mathbb{E}\|X-Z\|^2,$$
        where $Q_n=\{Z\in L^{2}_H\;:\;\exists V\subseteq H,\;\dim(V)\le n,\;Z\in V\;
        \textnormal{almost surely}\}$\review{: see, e.g., \cite{payen1967fonctions}. For later reference, we formalize these facts in the Lemma below. In doing so, we also include related results that will come in handy later on. The interested reader can find a detailed proof in the Appendix~\ref{appendix:classical-proofs}, or directly refer to \cite{payen1967fonctions}.}\\

\begin{lemma}
    \label{lemma:pod}
    Let $(H,\|\cdot\|)$ be a separable Hilbert space. Given a square-integrable $H$-valued random variable $X$, i.e. $\mathbb{E}^{1/2}\|X\|^2<+\infty$, let $B:H\to H$ be the linear operator
    $$B:u\mapsto \mathbb{E}\left[\scl{u}{X}X\right],$$
    where the integral is understood in the Bochner sense. Then:
    
    \begin{itemize}
        \item[i)] $B\in\trace$ in a symmetric positive semidefinite trace class operator;
        \item[ii)] there exists a sequence of (scalar) random variables $\{\eta_{i}\}_{i=1}^{+\infty}$ with $\expe[\eta_i\eta_j]=\delta_{i,j}$ such that
        $$X=\sum_{i=1}^{+\infty}\sqrt{\lambda_i}\eta_iv_i$$
        almost surely, where $\lambda_1\ge\lambda_2\ge\dots\ge0$ and $v_i\in H$ are the eigenvalues and eigenvectors of $B$, respectively;
        \item[iii)] for every orthogonal projection $P:H\to H$ one has
        $\mathbb{E}\|X-PX\|^2=\mathbb{E}\|X\|^2-\sum_{i=1}^{+\infty}\lambda_i\|Pv_i\|^2;$
        \item[iv)] for every $n\in\mathbb{N}_+$, $X_n:=\sum_{i=1}^{n}\sqrt{\lambda_i}\eta_iv_i$ satisfies
        $\mathbb{E}\|X-X_n\|^2=\inf_{Z\in Q_n}\mathbb{E}\|X-Z\|^2,$
        where $Q_n=\{Z\in L^{2}_H\;:\;\exists V\subseteq H,\;\dim(V)\le n,\;Z\in V\;
        \textnormal{almost surely}\}.$
    \end{itemize}
\end{lemma}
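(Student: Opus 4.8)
The plan is to prove the four items in order, using the Karhunen–Loève expansion produced in (ii) as the backbone for (iii) and (iv).

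\textit{Part (i).} First I would check that $B$ is well defined: for fixed $u\in H$ the map $\omega\mapsto\scl{u}{X(\omega)}X(\omega)$ is Bochner integrable because $\|\scl{u}{X}X\|\le\|u\|\,\|X\|^2$ and $\mathbb{E}\|X\|^2<+\infty$; hence $\|Bu\|\le\|u\|\,\mathbb{E}\|X\|^2$ and $B\in\bl(H)$. Pulling the inner product inside the Bochner integral gives $\scl{Bu}{w}=\mathbb{E}[\scl{u}{X}\scl{X}{w}]=\scl{u}{Bw}$, so $B$ is self-adjoint, and $\scl{Bu}{u}=\mathbb{E}|\scl{u}{X}|^2\ge0$, so $B$ is positive semidefinite. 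For the trace-class property, fix any orthonormal basis $\{e_i\}_i$ of $H$ and use Tonelli (all terms nonnegative) to get
$$\sum_{i}\scl{Be_i}{e_i}=\sum_i\mathbb{E}|\scl{e_i}{X}|^2=\mathbb{E}\sum_i|\scl{e_i}{X}|^2=\mathbb{E}\|X\|^2<+\infty.$$
Since $B$ is self-adjoint and positive, this is exactly the criterion for $B\in\trace$, and in fact $\mathsf{Tr}(B)=\mathbb{E}\|X\|^2$.

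\textit{Part (ii).} Apply the spectral theorem for compact, self-adjoint, positive operators to $B$, obtaining eigenpairs $(\lambda_i,v_i)$ with $\lambda_1\ge\lambda_2\ge\dots\ge0$ and $\{v_i\}_i$ orthonormal (completed, if necessary, by an orthonormal basis of $\ker B$). Since $\mathbb{E}|\scl{X}{v_i}|^2=\scl{Bv_i}{v_i}=\lambda_i$, for the indices with $\lambda_i>0$ set $\eta_i:=\lambda_i^{-1/2}\scl{X}{v_i}$, for which $\mathbb{E}[\eta_i\eta_j]=(\lambda_i\lambda_j)^{-1/2}\scl{Bv_i}{v_j}=\delta_{i,j}$; note also that $\scl{X}{v_i}=0$ almost surely whenever $\lambda_i=0$, so the corresponding terms of the series vanish identically. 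The identity $X=\sum_i\sqrt{\lambda_i}\,\eta_iv_i$ is then the Parseval expansion $X=\sum_i\scl{X}{v_i}v_i$, which holds almost surely and in $L^2_H$: indeed $X$ lies almost surely in $\ker(B)^{\perp}=\overline{\mathrm{range}(B)}=\overline{\mathrm{span}}\{v_i:\lambda_i>0\}$ (because $\scl{X}{v}=0$ a.s. for each $v$ in a countable dense subset of $\ker B$), and $\mathbb{E}\big\|X-\sum_{i\le n}\scl{X}{v_i}v_i\big\|^2=\mathbb{E}\|X\|^2-\sum_{i\le n}\lambda_i\to0$ by Part (i).

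\textit{Parts (iii) and (iv).} For (iii), since $P=P^{*}=P^{2}$ one has $\mathbb{E}\|X-PX\|^2=\mathbb{E}\|X\|^2-\mathbb{E}\|PX\|^2$; expanding $\|PX\|^2$ along an orthonormal basis $\{e_k\}_k$ of $\mathrm{range}(P)$ and using Tonelli,
$$\mathbb{E}\|PX\|^2=\sum_k\mathbb{E}|\scl{X}{e_k}|^2=\sum_k\scl{Be_k}{e_k}=\sum_k\sum_i\lambda_i|\scl{v_i}{e_k}|^2=\sum_i\lambda_i\|Pv_i\|^2,$$
which is the claimed formula. For (iv), I first reduce to projections: given $Z\in Q_n$ with $Z\in V$ almost surely, $\dim V\le n$, and writing $P_V$ for the orthogonal projection onto $V$, the orthogonal decomposition $X-Z=(X-P_VX)+(P_VX-Z)$ gives $\mathbb{E}\|X-Z\|^2\ge\mathbb{E}\|X-P_VX\|^2$; so by (iii) it suffices to show $\sum_i\lambda_i\|Pv_i\|^2\le\sum_{i=1}^n\lambda_i$ for every orthogonal projection $P$ with $\rank(P)\le n$, with equality for $P=P_n:=\sum_{i\le n}\scl{\cdot}{v_i}v_i$. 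Put $t_i:=\|Pv_i\|^2\in[0,1]$; choosing an orthonormal basis $\{e_1,\dots,e_m\}$ of $\mathrm{range}(P)$ with $m\le n$ gives $\sum_i t_i=\sum_{k\le m}\sum_i|\scl{v_i}{e_k}|^2\le\sum_{k\le m}\|e_k\|^2=m\le n$. Since $\lambda_1\ge\lambda_2\ge\dots\ge0$, the linear functional $(t_i)\mapsto\sum_i\lambda_it_i$ on $\{0\le t_i\le1,\ \sum_i t_i\le n\}$ is maximized at $t_1=\dots=t_n=1$ with value $\sum_{i=1}^n\lambda_i$, which is attained by $P=P_n$ (for which $X_n=P_nX$). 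Combining with (iii), $\mathbb{E}\|X-X_n\|^2=\mathbb{E}\|X\|^2-\sum_{i=1}^n\lambda_i=\inf_{Z\in Q_n}\mathbb{E}\|X-Z\|^2$.

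\textit{Main obstacle.} The conceptual core is the Ky-Fan-type maximization $\sum_i\lambda_i\|Pv_i\|^2\le\sum_{i=1}^n\lambda_i$ in (iv), which I would establish exactly through the double-counting bound $\sum_i\|Pv_i\|^2\le\rank(P)$ and the one-line linear-programming argument above; the remaining difficulties are purely bookkeeping — the repeated interchange of expectation and infinite sums (valid here only because every summand is nonnegative, so Tonelli applies) and the measure-theoretic observation that $X$ concentrates on $\overline{\mathrm{range}(B)}$. One should also note that if an honest sequence $\{\eta_i\}$ with $\mathbb{E}[\eta_i\eta_j]=\delta_{i,j}$ for \emph{all} $i,j$ is required, the zero-eigenvalue indices force the probability space to carry enough auxiliary independent randomness; otherwise the statement is read with the convention that zero-eigenvalue terms are simply absent.
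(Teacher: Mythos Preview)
Your proof is correct and follows essentially the same route as the paper's: the trace computation for (i), the definition $\eta_i=\lambda_i^{-1/2}\scl{X}{v_i}$ for (ii), the expansion $\mathbb{E}\|PX\|^2=\sum_i\lambda_i\|Pv_i\|^2$ for (iii), and the reduction-to-projections plus the linear-programming bound $\sum_i\lambda_i t_i\le\sum_{i\le n}\lambda_i$ for (iv) are exactly the paper's arguments (the paper isolates your LP step as a separate ``Truncation Lemma''). Your treatment is in fact slightly more careful than the paper's about the zero-eigenvalue indices and about why $X$ lies in $\overline{\mathrm{range}(B)}$.
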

\begin{proof}
    See Appendix~\ref{appendix:classical-proofs}.
\end{proof}

\subsection{\review{Measurability and continuity of the \emph{argmin} map}}
\label{sec:min-argmin}

\review{We conclude this Section by stating some useful results related to the solution of parametrized minimization problems, with a major emphasis on the regularity of the minimizers with respect to the underlying problem parameters.
Some of these results will provide the foundation for our forthcoming analysis in Section~\ref{sec:results}, while others are intended to deepen our intuition on the subject. }


\review{Let $\gparam\in\gspace$ and $c\in C$ denote the problem parameters and the optimization variable, respectively, of a suitable minimization problem with cost functional $J=J(\gparam, c).$}
We shall \review{state} 
conditions under which: \emph{(i)} \review{for each $\gparam\in\gspace$ there exists at least a minimizer $c\in C$; \emph{(ii)} there exists a map $c_*=c_*(\gparam)$, which is either  measurable or continuous in $\gparam$,} called \textit{minimal selection}, such that $$f(\gparam)=J(\gparam,c_*(\gparam))\quad \forall\gparam\in\gspace.$$

\review{We anticipate that, in our framework, $\gparam$ will be a suitable parameter parametrizing either a compact operator or a random variable. Conversely, $c$ will be a vector collecting all of the constituents necessary for the construction of a low-rank approximant (basis vectors, coefficients), whereas $J$ will be some cost functional measuring the accuracy of the approximation, in connection with the optimality results in Lemmae~\ref{lemma:svd}--\ref{lemma:pod}}.
\\\\
\review{In general, whether a parametric minimization problem admits a minimizer that depends continuously on the problem parameters, is a challenging question. This is because, even under fairly strong assumptions ---such as the compactness of $\gspace$ and $C$, and the smoothness of the objective functional $J$--- a continuously varying minimizer may fail to exist. In this respect, a crucial role is played by the \emph{multiplicity} of the solutions. We illustrate this fact with a simple example, depicted in Figure~\ref{fig:example-minimization}.}

\review{There, we consider a parametric minimization problem with}
$$J(\gparam,c)=(\gparam c)^3-\gparam c,$$
where $c\in C:=\in[-1,1]$ and $\gparam\in\gspace:=[1,2]$. \review{As made evident by the picture, any minimal selector $c_*:[1,2]\to[-1,1]$ would be discontinuous at $\gparam_0=2/\sqrt{3}$. In fact, it must be}
$$\review{c_*(\gparam)=}\begin{cases}
    \review{1/(\sqrt{3}\gparam)} & \review{1\le\gparam<\gparam_0,}\\
    \review{-1} & \review{\gparam_0<\gparam\le 2.}
\end{cases}$$
Since $J$ is analytic in both $\gparam$ and $c$, this shows that the smoothness of the objective function does not automatically translate into a smooth dependency of the minimizers with respect to the parameters. \review{Here, the issue}
\latest{lies in the fact, for $\gparam=\gparam_0$, the solution to the minimization problem is not unique.}

\begin{figure}
    \centering
    \includegraphics[width=0.55\linewidth]{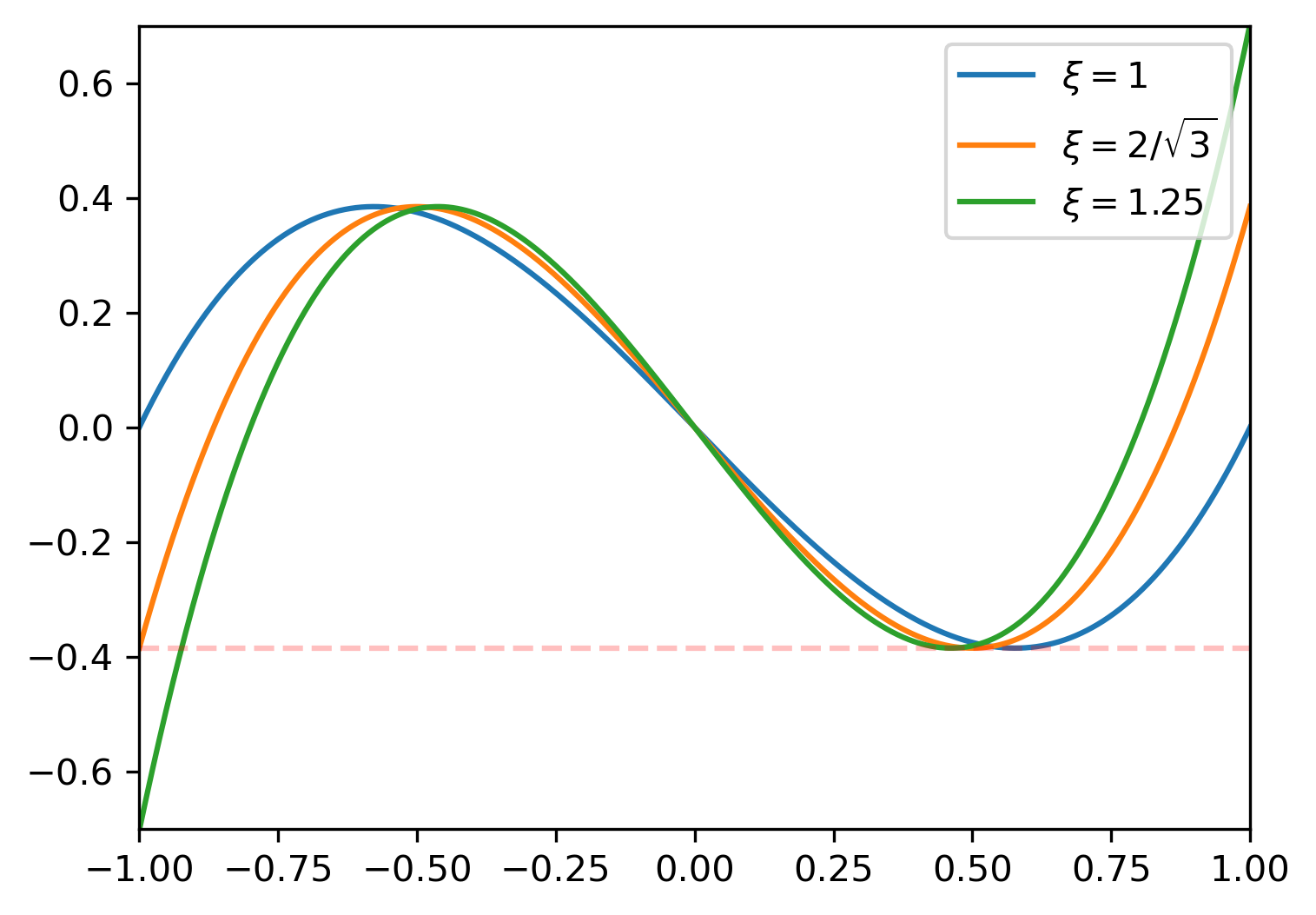}
    \caption{\small Profile of the objective function $J(\gparam,c)=(\gparam c)^3-\gparam c$ for three values of $\gparam$. In orange, the branching case $\gparam=2/\sqrt{3}$, where two minima appear.}
    \label{fig:example-minimization}
\end{figure}

%




\review{Intuitively, one expects a measurable minimal selection to exist in most cases, and a continuous one to be possible when the minimization problem admits a unique solution for all parametric instances $\gparam\in\gspace.$ These considerations are made rigorous in the Theorem below.}\\

\begin{theorem}[\bf Argmin regularity]
    \label{theorem:argmin}
    Let $(\gspace,d_{\gspace})$ and $(C,d_{C})$ be two Polish spaces, with $C$ compact. Let $J:\gspace\times C\to \mathbb{R}$ be lower semi-continuous. Assume that the map $\gparam\mapsto J(\gparam,c)$ is continuous for every $c\in C$.
    Then, there exists a Borel measurable map $c_*:\gspace\to C$ such that
    $$J(\gparam,c_*(\gparam))=\min_{c\in C}J(\gparam,c)\quad\quad\forall \gparam\in \gspace.$$  
    Additionally, if every $\gparam\in\gspace$ admits a unique $c'\in C$ minimizing $J(\gparam,\cdot)$, then $c_*$ is also continuous.
\end{theorem}

\begin{proof} 
See Appendix~\ref{appendix:classical-proofs}
\end{proof}

\review{We remark that the results in Theorem~\ref{theorem:argmin} are far from being novel. The one concerning the measurability of the \emph{argmin} map, for instance, is closely related to other results easily found in the literature, see, e.g., \cite[Theorem 18.19]{aliprantis2006infinite} and \cite[Theorem 14.37]{rockafellar1998variational}, whereas the one on continuity is a special case of Berge's Theorem \cite[Theorem 17.31]{aliprantis2006infinite}. Nonetheless, we provide the interested reader with suitable proofs that can be found in the Appendix~\ref{appendix:classical-proofs}. This is because, in the measurable case, our statement does not readily follow from \cite[Theorem 18.19]{aliprantis2006infinite} ---which assumes continuity in $c$, rather than lower-semicontinuity--- nor from \cite[Theorem 18.19]{aliprantis2006infinite} ---which assumes $C$ to be a subset of $\mathbb{R}^n$. Here, in fact, in light of our forthcoming analysis in Section~\ref{sec:results}, we are specifically interested in results that can be applied in the general context of Hilbert spaces and for which lower-semicontinuity suffices (essentially, because we will need to leverage the compactness provided by the weak topology, under which the norm is only lower-semicontinuous). Concerning the continuity result, instead, the proof reported in the Appendix is intended for the curious reader: under our assumptions, in fact, the statement can be proven using rather elementary arguments, without having to resort to the more advanced machinery of Set-Valued analysis underpinning Berge's Theorem.}


\section{Parameter dependent low-rank approximation}
\label{sec:results}

We are now ready to address the case of parameter dependent low-rank approximation. \review{We begin by outlining some established results before presenting our main contribution. The latter will be split into three parts: first, a result of general interest, and then a separate focus on parametric SVD and parametric POD, respectively.}


\review{In order to motivate our analysis, we find it instructive to start with a simple example that illustrates how, although singular values may vary smoothly with the underlying operator, singular vectors can exhibit discontinuities}. \review{Specifically, let us consider the following} parameter dependent $2\times 2$ matrix,
$$\mathbf{A}_{\gparam}=\left[
\begin{array}{cc}
    \gparam & 0 \\
     0 & 1-\gparam \\
\end{array}\right],$$
where $0\le\gparam\le1.$ \review{In this case,} singular values and singular vectors coincide with eigenvalues and eigenvectors, respectively. Since $\sigma_{1}(\mathbf{A}_\gparam)$ is the largest singular value, we have 
$$
\sigma_{1}(\mathbf{A}_{\gparam})=\begin{cases}
    1-\gparam & \text{if}\;0\le\gparam\le0.5\\
    \gparam & \text{if}\;0.5<\gparam\le1.
\end{cases}
$$
Similarly, $\sigma_{2}(\mathbf{A}_{\gparam})=\gparam$ if $0\le\gparam\le0.5$ and $\sigma_{2}(\mathbf{A}_{\gparam})=1-\gparam$ otherwise. Both $\sigma_1$ and $\sigma_2$ are continuous. However, the corresponding eigenvectors are
$$v_1^\gparam=\begin{cases}
    [0,1]^\top & \text{if}\;0\le\gparam\le0.5\\
    [1,0]^\top & \text{if}\;0.5<\gparam\le1.\end{cases}\quad\text{and}\quad
v_2^\gparam=\begin{cases}
    [1,0]^\top & \text{if}\;0\le\gparam\le0.5\\
    [0,1]^\top & \text{if}\;0.5<\gparam\le1,\end{cases}$$
both of which depend discontinuously on $\gparam.$ Clearly, the issue is caused by the branching point $\gparam=0.5$, where $\sigma_{1}(\mathbf{A}_\gparam)=\sigma_{2}(\mathbf{A}_\gparam)$. From an intuitive point of view, this is not really surprising. This whole phenomenon, in fact, is strictly related to the one discussed in Section~\ref{sec:min-argmin}: indeed, we can think of singular values and singular vectors as \review{solutions to} suitable minimization problems. Then, it becomes evident that changes in multiplicites of the eigenvalues can produce discontinuities in the dependency of singular vectors. 

\review{Having this in mind, the following results, which are part of the literature on Perturbation Theory, become rather intuitive.}

\begin{theorem}
    \label{theorem:singularvalues}
    Let \review{$(H_1,\|\cdot\|_{H_1})$ and $(H_2,\|\cdot\|_{H_1})$ be two separable Hilbert spaces}, and fix any $n\in\mathbb{N}_+$. 
    The map $\sigma_{n}:\cmpts(H_1,H_2)\to\mathbb{R}$ is 1-Lipschitz continuous. That is, for all $A,B\in\cmpts(H_1,H_2)$ one has
    $$|\sigma_n(A)-\sigma_n(B)|\le\opnorm{A-B}.$$
\end{theorem}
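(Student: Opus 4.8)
The plan is to derive the $1$-Lipschitz bound directly from Allahverdiev's variational characterization of singular values, namely $\sigma_n(A)=\inf\{\opnorm{A-L}\;:\;L\in\cmpts,\;\rank(L)\le n-1\}$, which is the form in which $\sigma_n$ was introduced in Section \ref{subsec:linear_operators}. The key observation is that each of the two maps $A\mapsto\opnorm{A-L}$ is itself $1$-Lipschitz (by the triangle inequality for the operator norm), and that the infimum of a family of uniformly Lipschitz functions is again Lipschitz with the same constant. Concretely, I would fix $A,B\in\cmpts$ and an arbitrary $\varepsilon>0$, and choose $L\in\cmpts$ with $\rank(L)\le n-1$ such that $\opnorm{B-L}\le\sigma_n(B)+\varepsilon$. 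Then
\begin{equation*}
\sigma_n(A)\le\opnorm{A-L}\le\opnorm{A-B}+\opnorm{B-L}\le\opnorm{A-B}+\sigma_n(B)+\varepsilon,
\end{equation*}
where the first inequality uses that $L$ is an admissible competitor in the infimum defining $\sigma_n(A)$, and the second is the triangle inequality. Letting $\varepsilon\to0$ gives $\sigma_n(A)-\sigma_n(B)\le\opnorm{A-B}$, and swapping the roles of $A$ and $B$ yields the reverse inequality, hence $|\sigma_n(A)-\sigma_n(B)|\le\opnorm{A-B}$.

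The only point that warrants a line of care is that the infimum defining $\sigma_n(B)$ is genuinely approached, i.e. that there exist competitors $L\in\cmpts$ with $\rank(L)\le n-1$ and $\opnorm{B-L}$ arbitrarily close to $\sigma_n(B)$; this is immediate from the definition of infimum and the fact that the admissible set is nonempty (e.g. $L=0$ when $n=1$, or any rank-$(n-1)$ truncation of the SVD of $B$ from Lemma \ref{lemma:svd} in general), so no compactness or attainment argument is needed. In fact one could bypass $\varepsilon$ entirely by taking $L=B_{n-1}$, the rank-$(n-1)$ SVD truncation of $B$ furnished by Lemma \ref{lemma:svd}, for which $\opnorm{B-B_{n-1}}=\sigma_n(B)$ exactly. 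I expect no real obstacle here: the statement is the operator-theoretic analogue of the classical Weyl-type perturbation inequality for singular values, and the variational characterization makes it essentially a one-line consequence of the triangle inequality. The mild bookkeeping is just ensuring symmetry in $A$ and $B$ and confirming that all competitors stay within the compact operators, which is clear since $\cmpts$ is a linear subspace of $\bl(H)$.
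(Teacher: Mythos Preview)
Your argument is correct and is essentially identical to the paper's proof: both use Allahverdiev's variational characterization, pick a near-optimal competitor $L$ for $\sigma_n(B)$, apply the triangle inequality for $\opnorm{\cdot}$, let $\varepsilon\to0$, and then swap $A$ and $B$. Your added remark about bypassing $\varepsilon$ via the exact SVD truncation $B_{n-1}$ is a nice optional shortcut that the paper does not use.
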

\begin{proof}
    \review{This is a classical result, see, e.g., \cite{kato2013perturbation}. Nonetheless, it can be proven effortlessly using the characterization in Eq. \eqref{eq:svalues}. Indeed, let} $n\in\mathbb{N}_+$ and $A,B\in\cmpts(H_1,H_2)$. Fix any $\varepsilon>0$. By definition, there exists some $L_\varepsilon\in\cmpts(H_1,H_2)$ with $\rank(L)\le n-1$ such that $\opnorm{B-L_\varepsilon}-\varepsilon<\sigma_n(B)$. We have
    \begin{equation*}
    \sigma_n(A)-\sigma_n(B)<\sigma_n(A)-\opnorm{B-L_\varepsilon}+\varepsilon\le 
    \opnorm{A-L_\varepsilon}-\opnorm{B-L_\varepsilon}+\varepsilon\le\opnorm{A-B}+\varepsilon.
    \end{equation*}
    Since $\varepsilon>0$ was arbitrary, we deduce $\sigma_n(A)-\sigma_n(B)\le\opnorm{A-B}$. As the situation is symmetric in $A$ and $B$, the conclusion follows.
\end{proof}

\begin{theorem}
    \label{theorem:rieszprojection}
    \review{Let $(H_1,\|\cdot\|_{H_1})$ and $(H_2,\|\cdot\|_{H_1})$ be two separable Hilbert spaces. Let $n\in\mathbb{N}_+$ and $\varepsilon>0.$ Consider the open set
    $$\mathscr{O}_{n,\varepsilon}:=\{A\in\cmpts(H_1,H_2)\;:\;\sigma_{n}(A)-\sigma_{n+1}\latest{(A)}>\varepsilon\}\subset \cmpts(H_1,H_2).$$
    Let $\mathscr{P}_n:=\{P\in\cmpts(H_2)\;:\;\rank(P) = n,\;P^2=P,\;P=P^*\}$ be the set of orthonormal projections from $H_2\to H_2$ of $n$-rank. \latest{Then, there}
    exists a continuous map $\cmpts(H_1,H_2)\ni A\to P\in\mathscr{P}_n$ that, to each operator $A$, assigns the orthonormal projection onto the subspace spanned by the first $n$ left (right) singular vectors of $A$. The same holds if the Hilbert-Schmidt norm is considered and $\cmpts(H_1, H_2)$ is replaced with $\hilbs(H_1, H_2).$}
\end{theorem}

\begin{proof}
    \review{This is essentially \cite[Theorem 3.16]{kato2013perturbation} applied to the self-adjoint operator $T:=AA^*$ (or $A^*A$, depending on whether one considers left or right singular vector). Since the map $A\to T$ is continuous, the conclusion follows. Concerning the Hilbert-Schmidt norm, notice that $\hilbs(H_1,H_2)$ embedds continuously in $\cmpts(H_1,H_2)$. Additionally, for every $P,Q\in\mathscr{P}_n$ one has $\hsnorm{P-Q}\le n\opnorm{P-Q}$, meaning that $(\mathscr{P}_n,\opnorm{\cdot})$ embeds continuously in $(\mathscr{P}_n,\hsnorm{\cdot})$. The statement is then trivial.}
\end{proof}

\review{Notice that, in order to talk about continuity, Theorem~\ref{theorem:rieszprojection} requires a \emph{gap condition}. This is very intuitive if we consider the interplay between the uniqueness result in Lemma~\ref{lemma:svd} and the continuity statement in Theorem~\ref{theorem:argmin}. In fact, it is an interesting exercise to prove Theorem~\ref{theorem:rieszprojection} using only Theorem~\ref{theorem:argmin} and not by relying on the more advanced theory of Functional Calculus (in contrast to the approach adopted in \cite{kato2013perturbation}). Additionally, we note that Theorem~\ref{theorem:rieszprojection} addresses the continuous dependence of the projection operator itself, rather than providing a continuity statement about the singular vectors. This is because, while the subspace spanned by the singular vectors is univocally defined, the corresponding basis is not. In what follows, we will deal with this issue using the theory of \emph{fiber bundles} \cite{husemoller1966fibre}.}

\review{Beside that, our purpose for the remainder of this Section is to provide complementary results to Theorem~\ref{theorem:rieszprojection} in order to show that: \emph{(i)} when the gap condition fails, parametric measurability still holds; \emph{(ii)} under suitable assumptions, one can recover an explicit parametrization of singular vectors, thus unpacking the underlying projection operator.}

\subsection{General \review{result}}
\review{We begin with a general result on the measurability of singular vectors, which is one of the main novelties in this work. We derive the latter as an application of Theorem~\ref{theorem:argmin}.}\\

\begin{theorem}
\label{theorem:measurab}
Let \review{$(H_1,\|\cdot\|_{H_1})$ and $(H_2,\|\cdot\|_{H_2})$} be two separable Hilbert spaces. Fix any $n\in\mathbb{N}_+$. There exists a Borel measurable map from $\cmpts(H_1,H_2)\to H_1^{n}\times H_2^n$, mapping
$$A\mapsto (u_1^A,\dots,u_n^A,v_1^A,\dots,v_n^A),$$
such that, for every $A\in\cmpts(H_1,H_2)$,
\begin{itemize}
    \setlength\itemsep{1em} 
    \item [i)] $Au_i^{A}=\sigma_i(A)v_i^{A}$ and $A^*v_i^{A}=\sigma_i(A)u_i^{A}$ for all $i=1,\dots,n$;
     \item [ii)] $A_n:=\sum_{i=1}^{n}\sigma_i(A)\langle\cdot,u_i^A\rangle_{H_1} v_i^A,$
minimizes the $n$-rank truncation error, i.e.,
$$\opnorm{A - A_n} = \inf\left\{\opnorm{A-L}\;:\;L\in\cmpts(H_1,H_2),\;\rank(L)\le n\right\};$$
    \item [iii)] for all $i,j=1,\dots,n$ with $i,j\le\rank(A)$ one has $\scl{u_i^A}{u_j^A}_{H_1}=\scl{v_i^A}{v_j^A}_{H_2}=\delta_{i,j}$;
    \item [iv)] the optimality in (ii) holds also with respect to the Hilbert-Schmidt norm;
    \item [v)] if $A=A^*$ then $u_i^A=v_i^A$ for all $i=1,\dots,n$ with $\sigma_i(A)>0.$
\end{itemize}

\end{theorem}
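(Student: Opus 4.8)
The plan is to build the map by induction on $n$, realizing the singular system through a deflation procedure in which each new vector is produced by the measurable minimal selection of Theorem \ref{theorem:optselec}.

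\emph{The building block ($n=1$).} Let $C:=(B_H,\tau_H)$ be the closed unit ball of $H$ with the weak topology; since $H$ is separable, $C$ is a compact metrizable space, and its Borel $\sigma$-field agrees with the one induced from the norm topology (cf. Lemma \ref{lemma:borel} in \ref{appendix:auxiliary}). The map $J:\cmpts\times C\to\mathbb{R}$, $J(A,u):=-\|Au\|$, is jointly continuous: if $A_k\to A$ in $\opnorm{\cdot}$ and $u_k\wto u$, then $\|A_ku_k-Au\|\le\opnorm{A_k-A}\,\|u_k-u\|+\|A(u_k-u)\|\to0$, the last term vanishing because $A$ is compact. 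Hence $J$ is lower semi-continuous with continuous marginals, $\cmpts$ is Polish (a separable Banach space), and Theorem \ref{theorem:optselec} gives a Borel measurable $\mathfrak{u}:\cmpts\to H$, $\mathfrak{u}(A)\in B_H$, with $\|A\mathfrak{u}(A)\|=\max_{u\in B_H}\|Au\|=\opnorm{A}=\sigma_1(A)$ (the maximum being attained by weak compactness and weak continuity of $u\mapsto\|Au\|$). When $\sigma_1(A)>0$ any such maximizer satisfies $\|\mathfrak{u}(A)\|=1$, and then the elementary identity "$\|u\|=1$, $\|Tu\|=\opnorm{T}$ $\Rightarrow$ $T^*Tu=\opnorm{T}^2u$" shows $\mathfrak{u}(A)$ is a unit eigenvector of $A^*A$ for its top eigenvalue $\sigma_1(A)^2$. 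Set $u_1^A:=\mathfrak{u}(A)$, $v_1^A:=Au_1^A/\sigma_1(A)$ on $\{\sigma_1>0\}$, and $u_1^A:=v_1^A:=0$ on the closed set $\{\sigma_1=0\}=\{A=0\}$; (i), (iii), (iv) are then immediate, (ii) and (v) follow by extending $\{u_1^A\}$ to a full orthonormal eigenbasis of $A^*A$, which makes $A_1=\sigma_1(A)\scl{\cdot}{u_1^A}v_1^A$ a genuine rank-$1$ SVD truncation, and applying Lemma \ref{lemma:svd}; (vi) is discussed below.

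\emph{The inductive step.} Assume a Borel measurable $A\mapsto(u_1^A,\dots,u_{n-1}^A,v_1^A,\dots,v_{n-1}^A)$ with all stated properties has been constructed. Put $A_{n-1}:=\sum_{i=1}^{n-1}\sigma_i(A)\scl{\cdot}{u_i^A}v_i^A$ and $\tilde A:=A-A_{n-1}$; since $(s,u,v)\mapsto s\scl{\cdot}{u}v$ is continuous into $\cmpts$ and $A\mapsto\sigma_i(A)$ is continuous (Theorem \ref{theorem:singularvalues}), $A\mapsto\tilde A$ is Borel measurable. As the $u_i^A$ ($i<n$) are orthonormal eigenvectors of $A^*A$ for its $n-1$ largest eigenvalues, $\tilde A$ vanishes on $V_{n-1}:=\mathrm{span}\{u_i^A:i<n\}$ and coincides with $A$ on $V_{n-1}^\perp$; consequently $\tilde A^*v_i^A=0$ for $i<n$ and $\sigma_1(\tilde A)=\sigma_n(A)$. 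On $\{\sigma_n>0\}$ set
$$u_n^A:=\mathfrak{u}(\tilde A)-\sum_{i=1}^{n-1}\scl{\mathfrak{u}(\tilde A)}{u_i^A}u_i^A=P_{V_{n-1}^\perp}\mathfrak{u}(\tilde A),\qquad v_n^A:=\frac{\tilde A u_n^A}{\sigma_n(A)},$$
and $u_n^A:=v_n^A:=0$ on $\{\sigma_n=0\}$; both maps are measurable. The projection removes only vectors in $\ker\tilde A$, so $\|\tilde Au_n^A\|=\opnorm{\tilde A}=\sigma_n(A)$, which together with $\|u_n^A\|\le1$ forces $\|u_n^A\|=1$ and hence $\|v_n^A\|=1$; by construction $u_n^A\perp u_i^A$, and using $\tilde A^*v_i^A=0$ also $v_n^A\perp v_i^A$, for $i<n$. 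Since $A_{n-1}$ and $A_{n-1}^*$ annihilate $u_n^A$ and $v_n^A$ respectively, $Au_n^A=\tilde Au_n^A=\sigma_n(A)v_n^A$ and $A^*v_n^A=\tilde A^*v_n^A=\tilde A^*\tilde Au_n^A/\sigma_n(A)=\sigma_n(A)u_n^A$ (the last equality again by the elementary identity applied to $\tilde A$). This yields (i), (iii), (iv); (ii) and (v) follow as before by extending $\{u_1^A,\dots,u_n^A\}$ to a full eigenbasis of $A^*A$, so that $A_n$ is a bona fide $n$-term SVD truncation, and invoking Lemma \ref{lemma:svd}. Finally, (vi): when $A=A^*$ (in the POD/PCA applications, positive semidefinite), the maximizers of $\|A\cdot\|$ over $B_H$ are eigenvectors of $A$ itself with eigenvalue $\sigma_i(A)\ge0$, so $v_i^A=Au_i^A/\sigma_i(A)=u_i^A$; since $\tilde A$ remains self-adjoint, the identity propagates through the recursion.

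\emph{Expected main obstacle.} The technical heart lies in maintaining measurability across the recursion while enforcing orthonormality \emph{by hand}: the selector of Theorem \ref{theorem:optselec} returns \emph{some} maximizer, not one orthogonal to the vectors already fixed, and — orthonormality not being a weakly closed condition — this cannot simply be absorbed into the constraint set of a single selection problem. The Gram--Schmidt-type correction above is the remedy, and verifying that it preserves unit norm, mutual orthogonality of the $v_i^A$, and the singular relations (i) (which hinges on the deflation identities $\tilde Au_i^A=0$, $\tilde A^*v_i^A=0$ and on $\sigma_1(\tilde A)=\sigma_n(A)$) is the most delicate part, alongside the weak-topology facts underpinning the selection step.
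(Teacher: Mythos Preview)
Your deflation argument is correct and takes a genuinely different route from the paper. Rather than recursing, the paper encodes all constraints in a single composite functional on $\cmpts\times(B_H)^{2n}$ (product weak topology): the truncation error $J_0$ plus three penalty terms $J_{1,1},J_{1,2},J_2$ that vanish exactly when $(u_1,\dots,u_n,v_1,\dots,v_n)$ satisfies the singular relations and the orthonormality constraints, then applies Theorem~\ref{theorem:optselec} once to extract all $2n$ vectors simultaneously. Your approach follows the classical numerical-linear-algebra intuition and is more constructive; it trades the paper's one-shot selection for transparent bookkeeping, and your Gram--Schmidt correction cleanly handles the fact that the raw selector $\mathfrak u(\tilde A)$ need not land in $V_{n-1}^\perp$ (the identities $\tilde A|_{V_{n-1}}=0$, $\tilde A^*v_i^A=0$, $\sigma_1(\tilde A)=\sigma_n(A)$ doing all the work). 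Two minor remarks: in your continuity estimate for $J$ the bound should read $\opnorm{A_k-A}\,\|u_k\|$, not $\|u_k-u\|$; and your caveat on (vi) --- that the argument really requires $A$ positive semidefinite rather than merely self-adjoint --- is well placed, since an eigenvector of $A^2$ for $\sigma_i^2$ need not be an eigenvector of $A$ for $+\sigma_i$ when $-\sigma_i$ also lies in the spectrum (the paper's own Step~6 glosses over this same point).
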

\begin{proof}
    We subdivide the proof into multiple steps. In what follows, let \review{$B_{H_j}:=\{w\in H_j\;:\;\|w\|_{H_j}\le1\}$ be the closed unit ball in $H_j$, with $j=1,2$. Define $C:=(B_{H_1})^n\times (B_{H_2})^n$ and equip the latter with the product topology $\tau_C:=\tau_{H_1}^n\otimes\tau_{H_2}^n$, with $\tau_{H_j}$ being the weak topology over $B_{H_j}$. In what follows, to distinguish between weak and strong convergence we shall use $\wto$ and $\to$, respectively.}
       
    \;\\{\bf Step 1.}
    \textit{The functional $J_0:(\review{\cmpts(H_1,H_2)},\opnorm{\cdot})\times (C,\tau_C)\to\mathbb{R}$ acting as
    $$J_0(A,u_1,\dots u_n,v_1,\dots, v_n)=\opnorm{A - \sum_{i=1}^{n}\sigma_i(A)\langle\cdot,u_i\rangle v_i},$$
    is: i) marginally continuous in $A$; ii) lower-semicontinuous. 
    }

    \begin{proof}
    The first statement is trivial thanks to Theorem~\ref{theorem:singularvalues}. As for the lower semi-continuity, instead, let $A_k\to A$, $u_i^{(k)}\wto u_i$ and $v_i^{(k)}\wto v_i$. Then, for every \review{$(x,y)\in H_1\times H_2$}, $\|x\|_{\review{H_1}}=1$, we have
    $$\sum_{i=1}^{n}\sigma_i(A_k)\langle x,u_i^{(k)}\rangle_{\review{H_1}} \langle v_i^{(k)},y\rangle_{\review{H_2}}\to \sum_{i=1}^{n}\sigma_i(A)\langle x,u_i\rangle_{\review{H_1}} \langle v_i,y\rangle_{\review{H_2}}.$$
    Thus, $A_kx-\sum_{i=1}^{n}\sigma_i(A_k)\langle x,u_i^{(k)}\rangle_{\review{H_1}} v_i^{(k)}\wto Ax-\sum_{i=1}^{n}\sigma_i(A)\langle x,u_i\rangle_{\review{H_1}}  v_i,$
    since $A_kx\to Ax$ strongly.
    Consequently, if \review{we let $c=(u_1,\dots,u_n,v_1,\dots,v_n)$ and $c_k:=(u_1^{(k)},\dots,u_n^{(k)},v_1^{(k)},\dots,v_n^{(k)}),$}
      \begin{equation*}
      \left\|Ax-\sum_{i=1}^{n}\sigma_i(A)\langle x,u_i\rangle_{\review{H_1}}  v_i\right\|\le
      \liminf_{k\to+\infty}\left\|A_kx-\sum_{i=1}^{n}\sigma_i(A_k)\langle x,u_i^{(k)}\rangle_{\review{H_1}} v_i^{(k)}\right\|\le\liminf_{k\to+\infty}\review{J_0}(A_k,c_k).\end{equation*}
    Passing at the supremum over $\|x\|=1$ yields $J_0(A,c)\le\liminf_{k\to+\infty}J_0(A_k,c_k).$\qedhere
    \end{proof}

    \;\\{\bf Step 2.}
    \textit{The functional $J_{1,1}:(\cmpts\review{(H_1, H_2)},\opnorm{\cdot})\times (C,\tau_C)\to\mathbb{R}$ acting as
    $$J_{1,1}(A,u_1,\dots u_n,v_1,\dots, v_n)=\sum_{i=1}^{n}\|Au_i-\sigma_i(A)v_i\|_{\review{H_2}},$$
    is: i) marginally continuous in $A$; ii) lower-semicontinuous. 
    }

    \begin{proof}
    As previously, the first statement comes directly from Theorem~\ref{theorem:singularvalues}. As for the second one, instead, let $A_k\to A$, $u_i^{(k)}\wto u_i$ and $v_i^{(k)}\wto v_i$. We have
    $$\|A_ku_i^{(k)}-Au_i\|_{\review{H_2}}\le \opnorm{A_k-A}\cdot\|u_i^{(k)}\|_{\review{H_1}}+\|Au_i^{(k)}-Au_i\|_{\review{H_2}}\to 0,$$
    since $u_i^{(k)}$ is uniformly norm bounded and $A$ turns weak convergence into strong convergence. Thus, meaning that $A_ku_i^{(k)}\to Au$ strongly. In particular $A_ku_i^{(k)}+\sigma_i(A_k)v_i^{(k)}\wto A u_i+\sigma_i(A)v_i$. Since $\|\cdot\|_{\review{H_2}}$ is lower-semicontinuous in the weak topology, the conclusion follows.
    \end{proof}

    \;\\{\bf Step 3.}
    \textit{The functional $J_{1,2}:(\cmpts\review{(H_1, H_2)},\opnorm{\cdot})\times (C,\tau_C)\to\mathbb{R}$ acting as
    $$J_{1,2}(A,u_1,\dots u_n,v_1,\dots, v_n)=\sum_{i=1}^{n}\left|\sigma_i(A)\sigma_j(A)\delta_{i,j}-\scl{Au_i}{Au_j}_{\review{H_2}}\right|^2,$$
    is continuous.
    }

    \begin{proof}
        Let $A_k\to A$ and $u_i^{(k)}\wto u_i$. As before, we have $A_ku_i^{(k)}\to Au$ strongly \review{due to} compactness of $A$. Since $\sigma_i$ is continuous in $A$, the conclusion follows.
    \end{proof}

    \;\\{\bf Step 4.}
    \textit{The functional $J_{2}:(\cmpts\review{(H_1, H_2)},\opnorm{\cdot})\times (C,\tau_C)\to\mathbb{R}$ acting as
    \begin{align*}
        J_{2}(A,u_1,\dots u_n,v_1,\dots, v_n)=J_{1,1}(A^*,v_1,\dots,v_n,u_1,\dots,u_n)+J_{1,2}(A^*,v_1,\dots,v_n,u_1,\dots,u_n),
    \end{align*}
    is: i) marginally continuous in $A$; ii) lower-semicontinuous. 
    }

    \begin{proof}
        We recall that $A_k\to A$ implies $A_k^*\to A^*.$ Then, the statement follows from Steps 2-3.
    \end{proof}

    \;\\{\bf Step 5.} \textit{Statements (i)-(iii) in  Theorem~\ref{theorem:measurab} hold true.}

    \begin{proof}
    Consider the functional $J(\cmpts\review{(H_1, H_2)},\opnorm{\cdot})\times (C,\tau_C)\to\mathbb{R}$ defined as
    $$J(A,c)=J_0(A,c)+J_{1,1}(A,c)+J_{1,2}(A,c)+J_2(A,c),$$
    which, by the previous steps, is lower-semicontinuous in $(A,c)$ and continuous in $A$ for every $c\in C.$ We  notice that, for every $A\in\cmpts\review{(H_1, H_2)}$,$$J_0(A,c)\ge\sigma_{n+1}(A),\quad J_{1,1}\ge0,\quad J_{1,2}\ge0,\quad J_{2}\ge0.$$ 
    In addition, for every $A\in\cmpts\review{(H_1, H_2)}$, one can leverage the classical SVD to construct a list $c_*=(u_1,\dots,u_n,v_1,\dots,v_n)$ that minimizes all four functionals simultaneously: $J_0(A,c_*)=\sigma_{n+1}(A)$ \review{due to} optimality of the truncated SVD; $J_{1,1}(A,c_*)=0$ \review{due to} definition of singular vectors; $J_{1,2}(A,c_*)=0$ thanks to the orthogonality constraints, $$\scl{Au_i}{Au_j}_{\review{H_2}}=\sigma_i(A)\sigma_j(A)\scl{v_i}{v_j}_{\review{H_2}}=\sigma_i(A)\sigma_j(A)\delta_{i,j};$$
    and similarly for $J_2$, since $\scl{A^*v_i}{A^*v_j}_{\review{H_1}}=\sigma_i(A)\sigma_j(A)\scl{u_i}{u_j}_{\review{H_1}}$. Clearly, $c_*$ is not unique as, for instance, $J(A,c_*)=J(A,-c_*)$. Nonetheless, this shows that
    $$\inf_{c\in C}J(A,c)=\sigma_{n+1}(A)=\inf\left\{\opnorm{A-L}\;:\;L\in\hilbs,\;\rank(L)\le n\right\}.$$
    Then, leveraging the continuity properties of $J$ and the compactness of $(C,\tau_C)$, \review{we may now exploit Theorem~\ref{theorem:argmin} to construct a Borel measurable selector} $c_*:\cmpts\review{(H_1, H_2)}\to C$ such that $J(A,c_*(A))=\inf_{c\in C}J(A,c)=\sigma_{n+1}(A)$ for all $A\in\cmpts\review{(H_1, H_2)}$. \review{Notice that we are able to apply Theorem~\ref{theorem:argmin} since the separability of $H_1$ and $H_2$ ensures that $\tau_C$ is both metrizable and separable.}
    For better readability, write $(u_1^A,\dots,u_n^A,v_1^A,\dots,v_n^A):=c_*(A).$
    Notice that $J(A,c_*(A))=\sigma_{n+1}(A)$ implies
    \begin{itemize}
        \setlength{\itemsep}{1em}
        \item $J_{1,1}(A,c_*(A))=0$, thus $Au_i^A=\sigma_i(A)v_i^A$. This is part of (i) in the Theorem;
        \item $J_0(A,c_*(A))=\sigma_{n+1}(A)$, which is (ii) in the Theorem;
         \item $J_{1,2}(A,c_*(A))=0$ and, by the previous observation, 
         $$\sigma_{i}(A)\sigma_{j}(A)\delta_{i,j}=\scl{Au_i^A}{Au_j^A}_{\review{H_2}}=\sigma_{i}(A)\sigma_{j}(A)\scl{v_i}{v_j}_{\review{H_2}}.$$
         In particular, if $i,j\le\rank(A)$ then $\sigma_i(A),\sigma_j(A)>0$ and the above yields $\scl{v_i}{v_j}_{\review{H_2}}=\delta_{i,j}$. This is part of (iii) in the Theorem;
         \item $J_{2}(A,c_*(A))=0$ ensuring the equivalent conditions on the adjoint and thus proving (i) and (iii).\hfill\qedhere
    \end{itemize}
    \end{proof}

    \;\\{\bf Step 6.} \textit{Statements (iv)-(v) in  Theorem~\ref{theorem:measurab} hold true.}

    \begin{proof}Thanks to (i)-(iii), the vectors $u_i^A$ and $v_i^A$ are guaranteed to be left and right singular vectors of $A$, respectively, whenever $\sigma_i(A)>0$. In particular, the operator $A_n$ is a truncated SVD of $A$, and (iv) automatically follows from Lemma~\ref{lemma:svd}. Finally, to prove (v) notice that if $A=A^*$ then
    $$A^2u_i^A=A(\sigma_i(A)v_i^A)=\sigma_i(A)Av_i^A=\sigma_i(A)^2u_i^A,$$
    meaning that $u_i^A$ is an eigenvector of $A^2$. However, since $A$ is symmetric, $A$ and $A^2$ are known to share the same eigenvectors. Then, it must be $\sigma_i^Av_i^A=Au_i^A=\sigma_i(A)u_i^A.$
    Dividing by $\sigma_i(A)>0$ yields the desired conclusion. \let\qed\relax\hfill\qedsymbol\;\;\qedsymbol\end{proof}\let\qed\relax
\end{proof} 


\subsection{\bf Parametric SVD}
\label{subsec:parametric svd}

We \review{are now ready to state and prove a measurability and a continuity result for parametric SVD, respectively.}\\ 

\begin{theorem}
    \label{theorem:svd}
    Let $(\gspace,\mathscr{M})$ be a measurable space\review{. Let $(H_1,\|\cdot\|_{H_1})$ and $(H_2,\|\cdot\|_{H_2})$ be two separable Hilbert spaces}. Let $\gspace\ni\gparam\to A_\gparam\in\review{\cmpts(H_1,H_2)}$ be a measurable map. Fix any $n\in\mathbb{N}_+$. Assume that $\rank(A_\gparam)\ge n$ for all $\gparam\in\gspace$. Then, there exist measurable maps
    $$s_i:\gspace\to[0,\infty),\;\;u_i:\gspace\to \review{H_1},\;\;v_i:\gspace\to \review{H_2},\;\;i=1,\dots,n,$$
    such that $s_i(\gparam)=\sigma_i(A_\gparam)$, $\langle u_i(\gparam),u_j(\gparam)\rangle\review{_{H_1}} = \langle v_i(\gparam),v_j(\gparam)\rangle\review{_{H_2}} = \delta_{i,j}$ and
    $$\opnorm{A_\gparam - \sum_{i=1}^{n}s_i(\gparam)\langle\cdot,u_i(\gparam)\rangle_{\review{H_1}} v_i(\gparam)} = \inf_{\substack{L\in\cmpts(H_1,H_2)\\\rank(L)\le n}}\opnorm{A_\gparam-L}=\sigma_{n+1}(A_\gparam),$$
    for all $\gparam\in \gspace.$ 
    In particular, there exists a measurable map $\gspace\ni\gparam\mapsto A_\gparam^n\in\cmpts(H_1,H_2)$ mapping each $\gparam$ onto an optimal $n$-rank approximation of $A_\gparam.$
    The same results hold if\review{, in all of the above,} $\review{\cmpts(H_1,H_2)}$ is replaced with $\review{\hilbs(H_1,H_2)}$ and the operator norm is substituted by the Hilbert-Schmidt norm.
\end{theorem}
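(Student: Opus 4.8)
The plan is to obtain Theorem \ref{theorem:svd} as an essentially immediate corollary of Theorems \ref{theorem:singularvalues} and \ref{theorem:measurab}, the whole matter reducing to the stability of measurability and continuity under composition. First I would recall from Theorem \ref{theorem:measurab} the existence of a Borel measurable map $\Phi:\cmpts\to H^{2n}$, $\Phi(A)=(u_1^A,\dots,u_n^A,v_1^A,\dots,v_n^A)$, enjoying properties (i)--(vi), and from Theorem \ref{theorem:singularvalues} that for every $i\le n$ the map $\sigma_i:\cmpts\to[0,\infty)$ is $1$-Lipschitz, hence both continuous and Borel measurable. Then I would simply define
$$u_i(\gparam):=u_i^{A_\gparam},\qquad v_i(\gparam):=v_i^{A_\gparam},\qquad s_i(\gparam):=\sigma_i(A_\gparam),$$
so that $(u_1,\dots,u_n,v_1,\dots,v_n)$ is the composition of the measurable map $\gparam\mapsto A_\gparam$ with $\Phi$ (post-composed with the continuous coordinate projections $H^{2n}\to H$), while each $s_i$ is the composition of $\gparam\mapsto A_\gparam$ with $\sigma_i$. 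Being compositions of $\mathscr{M}$-to-Borel measurable maps, all of $u_i,v_i,s_i$ are measurable.

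Next I would check the pointwise assertions at each fixed $\gparam\in\gspace$. Since $\rank(A_\gparam)\ge n$, for all $i,j\in\{1,\dots,n\}$ one has $i,j\le\rank(A_\gparam)$, so property (iv) of Theorem \ref{theorem:measurab} yields $\scl{u_i(\gparam)}{u_j(\gparam)}=\scl{v_i(\gparam)}{v_j(\gparam)}=\delta_{i,j}$; property (i) gives $A_\gparam u_i(\gparam)=\sigma_i(A_\gparam)v_i(\gparam)$ and $s_i(\gparam)=\sigma_i(A_\gparam)$ by construction; property (ii) gives
$$\opnorm{A_\gparam-\sum_{i=1}^{n}s_i(\gparam)\scl{\cdot}{u_i(\gparam)}v_i(\gparam)}=\inf_{\substack{L\in\cmpts\\\rank(L)\le n}}\opnorm{A_\gparam-L}.$$
For the continuity clause, if $\gspace$ is a topological space and $\gparam\mapsto A_\gparam$ is continuous, then each $s_i=\sigma_i\circ(\gparam\mapsto A_\gparam)$ is a composition of continuous maps, hence continuous; note that no analogous statement is claimed, nor can hold in general, for $u_i$ and $v_i$, as the explicit $2\times2$ diagonal example given earlier in this section shows.

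For the $\hilbs$ counterpart I would only observe that the canonical inclusion $\hilbs\hookrightarrow\cmpts$ is continuous, because $\opnorm{A}=\sigma_1(A)\le\sqrtt{\sum_i\sigma_i(A)^2}=\hsnorm{A}$; consequently a map $\gparam\mapsto A_\gparam$ that is measurable (respectively continuous) with values in $\hilbs$ is also measurable (respectively continuous) with values in $\cmpts$, so the construction above applies without change, and the optimality is then upgraded to the Hilbert-Schmidt norm by invoking property (v) of Theorem \ref{theorem:measurab} in place of (ii). I expect the main, and essentially only, obstacle to be bookkeeping: ensuring that the composed maps are measurable with respect to the correct $\sigma$-algebras, and that the hypothesis $\rank(A_\gparam)\ge n$ is invoked exactly where the orthonormality in (iv) requires it --- all the genuine work having already been carried out in Theorems \ref{theorem:singularvalues} and \ref{theorem:measurab}.
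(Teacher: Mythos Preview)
Your proposal is correct and follows essentially the same approach as the paper: define $s_i,u_i,v_i$ by composing the parametrization $\gparam\mapsto A_\gparam$ with the Borel measurable selector from Theorem \ref{theorem:measurab} and the Lipschitz maps $\sigma_i$ from Theorem \ref{theorem:singularvalues}, then read off the pointwise properties from (i)--(v) and invoke $\rank(A_\gparam)\ge n$ precisely to activate the orthonormality in (iv). Your write-up is in fact more explicit than the paper's, which dispatches the result in three lines; the additional remarks on the continuous inclusion $\hilbs\hookrightarrow\cmpts$ and on why no continuity of $u_i,v_i$ is claimed are accurate and helpful.
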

\begin{proof} We limit the proof to $(\cmpts\review{(H_1,H_2)},\opnorm{\cdot})$ as the same arguments can be readily applied to the Hilbert-Schmidt case.
    Let $c_*:\cmpts(H_1,H_2)\to H_1^n\times H_2^n$ be the measurable map in Theorem~\ref{theorem:measurab}. For \review{$i=1,\dots,n$, let $p_i:H_1^{n}\to H_1$ and $q_i:H_2^n\to H_2$ be the projections} onto the $i$th component. Set
    $$s_i(\gparam):=\sigma_i(A_\gparam),\;\;u_i(\gparam):=p_i(c_*(A_\gparam)),\;\;v_i(\gparam):=q_{i}(c_*(A_\gparam)).$$
    Then, the conclusion follows by composition thanks to Theorem~\ref{theorem:singularvalues} and Theorem~\ref{theorem:measurab}. 
\end{proof}

\review{As we anticipated, deriving a suitable continuity result entails an additional difficulty given by the fact that, while the truncated SVD is unique under the \emph{gap assumption}, its representation in terms of basis vectors is not. Still, if the parameter space is topologically contractible to a point, ---e.g., $\gspace=[0,1]^p$--- then we can still recover an explicit representation: see Theorem~\ref{theorem:continuous-svd} right below.}\\

\begin{theorem}
    \label{theorem:continuous-svd}
    Let $(\gspace,d_\gspace)$ be a compact metric space. \review{Let $(H_1,\|\cdot\|_{H_1})$ and $(H_2,\|\cdot\|_{H_2})$ be two separable Hilbert spaces. Let $\gspace\ni\gparam\to A_\gparam\in\cmpts(H_1,H_2)$ be continuous.} Fix any $n\in\mathbb{N}_+$ and assume that $\sigma_{n+1}(A_\gparam) < \sigma_n(A_\gparam)$ for all $\gparam\in\gspace.$ 
    Then, there exists a continuous map
    $\gspace\ni\gparam\mapsto A_{\gparam}^n\in\cmpts(H_1,H_2)$ 
    such that, for every $\gparam\in\gspace$, the operator $A_{\gparam}^n$ is \review{an optimal $n$-rank approximation of $A_\gparam$ \review{in both the operator and the Hilbert-Schmidt norms}.
    Additionally, if $(\gspace, d_{\gspace})$ is contractible to a point, then, for $i=1,\dots,n$, there exist continuous maps
    $$\tilde{u}_i:\gspace\to H_1\quad\text{and}\quad v_i:\gspace\to H_2,$$
    such that, for every $\gparam\in\gspace$, one has  $\langle v_i(\gparam),v_j(\gparam)\rangle_{H_2}=\delta_{i,j}$ and \begin{equation}
    \label{eq:lowrankexpansion}A_{\gparam}^{n}=\sum_{i=1}^{n}\langle\cdot,\tilde{u}_i(\gparam)\rangle_{H_1}v_i(\gparam).\end{equation} 
    The same results hold if, in all of the above, $\review{\cmpts(H_1,H_2)}$ is replaced with $\review{\hilbs(H_1,H_2)}$ and the operator norm is substituted by the Hilbert-Schmidt norm.}
\end{theorem}

\begin{proof}
\review{Without loss of generality, we shall limit the proof the case of $(\cmpts(H_1,H_2),\opnorm{\cdot})$: the Hilbert-Schmidt case will follow similarly. Leveraging compactness and continuity, let $\varepsilon:=\min_{\gparam\in\gspace}|\sigma_{n+1}(A_\gparam)-\sigma_n(A_\gparam)|>0.$ For each $\gparam\in\gspace$, let us denote by $P_\gparam$ be the orthonormal projection onto the subspace spanned by the first $n$ left singular vectors of $A_\gparam$. Then, in virtue of Theorem~\ref{theorem:rieszprojection}, the Riesz projection $P_\gparam$ depends continuously on $\gparam$ under the operator norm. Define $A_\gparam^n:=P_\gparam A_\gparam.$ It is straightforward to see that $A_\gparam^n$ is the truncated SVD of $A_\gparam$. Furthermore, it is clear that $A_\gparam^n$ depends continuously on $\gparam$. The first part of the Theorem is thus proven.}

\review{For the second part, instead, consider the triplet $(E,p,B)$ given by
$$E:=\bigcup_{\gparam\in\gspace}\{\gparam\}\times P_\gparam(H_2),\quad\quad B:=\gspace,\quad\quad p:E\to B,$$
where $p:\{\gparam\}\times P_\gparam(H)\mapsto \gparam.$ We claim that $(E,p,B)$ is a \emph{vector bundle} according to the usual definition: see, e.g., Definition 1.1 in Chapter 3 of \cite{husemoller1966fibre}. Clearly, for every $\gparam\in B$, the fiber $p^{-1}(\gparam)$ is equipped with an $n$-dimensional vector structure, since $p^{-1}(\gparam)\cong P_\gparam(H_2).$ Thus, we only need to prove that for every $\gparam\in B$ there exists an open neighborhood $U\subseteq B$ and a map $\phi_{U}:U\times\mathbb{R}^n\to p^{-1}(U)$ such that, for every $\gparam'\in U$, the restriction to $\{\gparam'\}\times\mathbb{R}^n\to p^{-1}(\gparam')$ is an isomorphism of vector spaces.}

\review{To this end, fix any $\gparam\in B.$
Consider the map $f:\gparam'\mapsto \sigma_n(P_{\gparam'}P_\gparam)$, going from $B$ to $\mathbb{R}$. Since $P_{\gparam'}$ depends continuously of $\gparam'$ and $\sigma_n$ is continuous as well, we conclude that $f$ is also continuous. Given that
$$f(\gparam)=\sigma_n(P_\gparam P_\gparam)=\sigma_n(P_\gparam^2)=\sigma_n(P_\gparam)=1>0,$$
there must exist some $\delta>0$ such that $f(\gparam')>1/2$ for all $\gparam'\in B$ with $d_{\gspace}(\gparam,\gparam')<\delta.$ Define $$U:=\{\gparam'\in B\;:\;d_{\gspace}(\gparam,\gparam')<\delta\}.$$
Then, for all $\gparam'\in U$ we have $\sigma_n(P_{\gparam'}P_{\gparam})>0$, and thus, $\rank(P_{\gparam'}P_{\gparam})=n.$ This allows us to define the map $\phi_{U}:U\times\mathbb{R}^n\to p^{-1}(U)$ as
$$\phi_U:(\gparam',c)\mapsto\left(\gparam',\sum_{i=1}^{n}c_i P_{\gparam'}(v_i)\right)\in \{\gparam'\}\times P_{\gparam'}(H_2)$$
where $v_1,\dots, v_n$ is any basis of $P_\gparam(H_2),$ temporarily fixed for the sole purpose of defining $\phi_U$. Basically, the idea is to exploit the fact that, if we are close enough to $\gparam$, namely $\gparam'\in U$, then one can project a vector onto $P_{\gparam'}(H_2)$ in two steps: first projecting from $H_2$ to $P_{\gparam}(H_2)$, and then to $P_{\gparam'}(H_2)$. Indeed, since $\rank(P_{\gparam'}P_{\gparam})=n$, the set $\{P_{\gparam'}(v_i)\}_{i=1}^{n}$ is guaranteed to be a basis of $P_{\gparam'}(H_2)$, and no information is lost in the process. In particular, the restriction of $\phi_U$ to $\{\gparam'\}\times\mathbb{R}^{n}\to p^{-1}(\gparam')$ is an isomorphism. Since $\gparam'\in U$ was arbitrary (and so was $\gparam\in B$), this shows that $(E,p,B)$ is indeed a vector bundle.}

\review{As a next step, we notice that the base space of such vector bundle, $B=\gspace$, is contractible by hypothesis. Then, $(E,p,B)$ is \emph{trivial}: see, e.g., \cite[Corollary 4.8]{husemoller1966fibre}. In particular, there exist continuous functions $\tilde{v}_i:B\to H_2$ such that, for every $\gparam\in B$, the set $\{\tilde{v}_i(\gparam)\}_{i=1}^{n}$ is a basis for the fiber $p^{-1}(\gparam)$.} 

\review{Since $p^{-1}(\gparam)\cong P_{\gparam}(H_2),$ we have nearly obtained our desired parametrization, except for the orthonormality constraints. To address this fact, we define the maps $v_i:\gspace\to H_2$ by applying, pointwise in the parameter space, the Gram-Schmidt orthonormalization to the parametrized basis $\tilde{v}_i$. Namely, if $\varrho:H_2\setminus\{0\}\to H_2$ is the normalization map, $\varrho(x):=x/\|x\|_{H_2}$, then
\begin{itemize}
    \item[---] $v_1(\gparam):=\varrho(\tilde{v}_1(\gparam))$
    \item[---] $v_{i+1}(\gparam):=\varrho(\tilde{v}_{i+1}(\gparam)-\sum_{j=1}^{i}\scl{\tilde{v}_{i+1}(\gparam)}{v_j(\gparam)}_{H_2}\tilde{v}_{i+1}(\gparam))$ for $i=1,\dots,n-1.$
\end{itemize}
Notice that, since $\{\tilde{v}_i(\gparam)\}_i$ is always a basis of rank $n$, this procedure preserves the continuity with respect to $\gparam$. Finally, to conclude, we set
$$\tilde{u}_i(\gparam):= A^*_\gparam v_i(\gparam).$$
Then, for every $x\in H_1$, one has
$$
\sum_{i=1}^{n}\scl{x}{\tilde{u}_i(\gparam)}_{H_1}v_i(\gparam)=\sum_{i=1}^{n}\scl{x}{A^*_\gparam v_i(\gparam)}_{H_1}v_i(\gparam))=
\sum_{i=1}^{n}\scl{A_\gparam x}{v_i(\gparam)}_{H_2}v_i(\gparam))=P_\gparam(A_\gparam x)=A_\gparam^n x,$$
meaning that Eq. \eqref{eq:lowrankexpansion} holds, as claimed.}
\end{proof}

\subsection{\bf Parametric POD}

We now switch from linear operators to high-dimensional random variables, \review{addressing the case of} 
parametric POD. \review{As before, we provide both a measurability and a continuity result.}

\begin{theorem}
    \label{theorem:pod}
    Let $(\gspace,\mathscr{M})$ be a measurable space and let $(H,\|\cdot\|)$ be a separable Hilbert space. Let $\{X_\gparam\}_{\gparam\in \gspace}\subseteq L^{2}_{H}$ be a family of square-integrable Hilbert-valued random variables.
    Assume that the map $\gparam\to X_\gparam$ is measurable from $\gspace\to L^2_H$. 
    Fix any $n\in\mathbb{N}_+.$ Then, there exist measurable maps $v_i:\gspace\to H$ for $i=1,\dots,n,$
    such that $\langle v_i(\gparam),v_j(\gparam)\rangle=\delta_{i,j}$ and 
    $$\mathbb{E}\left\|X_\gparam - \sum_{i=1}^{n}\langle X_\gparam, v_i(\gparam)\rangle v_i(\gparam)\right\|^2=\inf_{Z\in Q_n}\;\mathbb{E}\|X_\gparam-Z\|^2,$$
    for all $\gparam\in \gspace,$ where
    $Q_n = \left\{Z\in L^2_H\;:\;\exists V\subseteq H,\;\dim(V)\le n,\;Z\in V \text{almost surely}\right\}.$
    Furthermore, if $\gspace$ is a metric space and $\mathbb{E}\|X_\gparam-X_{\gparam'}\|^2\to 0$ for $\gparam\to\gparam'$, then the maps $\gparam\to\mathbb{E}|\langle X_\gparam, v_i(\gparam)\rangle|^2$ are continuous.
\end{theorem}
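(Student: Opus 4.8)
The plan is to reduce Theorem~\ref{theorem:pod} to Theorem~\ref{theorem:svd} by passing to the (uncentered) covariance operator, and then to track how the relevant quantities transform under this reduction. First I would recall from Lemma~\ref{lemma:pod}(i) that, for each $\gparam\in\gspace$, the operator $B_\gparam:u\mapsto\mathbb{E}[\scl{u}{X_\gparam}X_\gparam]$ is a symmetric, positive semidefinite, trace-class operator; in particular $B_\gparam\in\hilbs(H)\subseteq\cmpts(H)$. The key structural fact I need is that the POD minimization for $X_\gparam$ is governed entirely by $B_\gparam$: by Lemma~\ref{lemma:pod}(iii)--(iv), an $n$-dimensional subspace $V=\mathrm{span}\{v_1,\dots,v_n\}$ (with $\{v_i\}$ orthonormal) minimizes $\mathbb{E}\|X_\gparam-P_VX_\gparam\|^2$ precisely when the $v_i$ can be taken to be the top $n$ eigenvectors of $B_\gparam$, and then the optimal value is $\mathbb{E}\|X_\gparam\|^2-\sum_{i=1}^n\sigma_i(B_\gparam)$, where $\sigma_i(B_\gparam)=\lambda_i$ are the eigenvalues of $B_\gparam$. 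Since $B_\gparam$ is self-adjoint, Theorem~\ref{theorem:measurab}(vi) tells us its left and right singular vectors coincide (for positive singular values), so a measurable selection of singular vectors of $B_\gparam$ is exactly a measurable selection of an orthonormal eigenbasis spanning the optimal POD subspace.

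The main technical point to establish is therefore: the map $\gparam\mapsto B_\gparam$ is measurable from $\gspace$ to $(\hilbs,\hsnorm{\cdot})$ whenever $\gparam\mapsto X_\gparam$ is measurable from $\gspace$ to $L^2_H$. I would prove this by showing that the ``covariance'' map $\Phi:L^2_H\to\hilbs$, $\Phi(X):=\mathbb{E}[\scl{\cdot}{X}X]$, is continuous — in fact locally Lipschitz — so that $\gparam\mapsto B_\gparam=\Phi(X_\gparam)$ is measurable as a composition. Continuity of $\Phi$ follows from the bilinear estimate: writing $\Phi(X)-\Phi(Y)=\mathbb{E}[\scl{\cdot}{X-Y}X]+\mathbb{E}[\scl{\cdot}{Y}(X-Y)]$, one bounds $\hsnorm{\Phi(X)-\Phi(Y)}$ by something like $\|X-Y\|_{L^2_H}(\|X\|_{L^2_H}+\|Y\|_{L^2_H})$, using that for a rank-one-type expectation the Hilbert--Schmidt norm is controlled by second moments (e.g.\ $\hsnorm{\mathbb{E}[\scl{\cdot}{X}Y]}\le \mathbb{E}[\|X\|\,\|Y\|]\le\|X\|_{L^2_H}\|Y\|_{L^2_H}$ by Cauchy--Schwarz in $L^2$). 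I expect this estimate to be the main obstacle only in the sense of bookkeeping: one must justify interchanging expectation and the Hilbert--Schmidt norm bound via the Bochner-integral triangle inequality applied in $\hilbs$, which is legitimate since $X\mapsto\scl{\cdot}{X}X$ is Bochner integrable into $\hilbs$ with $\hsnorm{\scl{\cdot}{X}X}=\|X\|^2\in L^1$.

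With measurability of $\gparam\mapsto B_\gparam$ in hand, I apply Theorem~\ref{theorem:svd} to this map (the hypothesis $\rank(A_\gparam)\ge n$ there is only used to guarantee the asserted orthonormality; if $\rank(B_\gparam)<n$ one still obtains orthonormal vectors among the first $\rank(B_\gparam)$ and may complete the list measurably using a measurable selection of an orthonormal complement, which is harmless since the ``surplus'' $v_i$ correspond to zero eigenvalues and do not affect the projection error — alternatively one simply notes Lemma~\ref{lemma:pod}(iii)--(iv) still identify the optimum). This yields measurable $v_i:\gspace\to H$ with $\scl{v_i(\gparam)}{v_j(\gparam)}=\delta_{ij}$ such that $\mathrm{span}\{v_i(\gparam)\}$ is an optimal POD subspace, whence $\mathbb{E}\|X_\gparam-\sum_i\scl{X_\gparam}{v_i(\gparam)}v_i(\gparam)\|^2=\inf_{Z\in Q_n}\mathbb{E}\|X_\gparam-Z\|^2$ by Lemma~\ref{lemma:pod}(iv). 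For the final continuity claim: if $\gspace$ is topological and $\gparam\mapsto X_\gparam$ is continuous into $L^2_H$, then by continuity of $\Phi$ the map $\gparam\mapsto B_\gparam$ is continuous into $\hilbs$, so Theorem~\ref{theorem:svd}'s continuity conclusion gives that $\gparam\mapsto\sigma_i(B_\gparam)$ is continuous for each $i$; since $\mathbb{E}|\scl{X_\gparam}{v_i(\gparam)}|^2=\scl{B_\gparam v_i(\gparam)}{v_i(\gparam)}=\sigma_i(B_\gparam)$ (the $v_i(\gparam)$ being eigenvectors for positive eigenvalues, and the identity holding trivially as $0=0$ when $\sigma_i(B_\gparam)=0$), the stated maps are continuous. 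I do not expect any genuine difficulty beyond the covariance-continuity estimate and the minor care needed when $\rank(B_\gparam)$ dips below $n$.
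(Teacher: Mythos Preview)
Your proposal is correct and follows essentially the same route as the paper: pass to the uncentered covariance operator $B_\gparam$, show that $\gparam\mapsto B_\gparam$ is measurable (resp.\ continuous) via a Lipschitz-type estimate on the map $X\mapsto\mathbb{E}[\scl{\cdot}{X}X]$ (the paper records this as Lemma~\ref{lemma:covariance}, in the trace norm rather than the Hilbert--Schmidt norm), apply Theorem~\ref{theorem:svd} to obtain measurable eigenvectors of the self-adjoint $B_\gparam$, and identify $\mathbb{E}|\scl{X_\gparam}{v_i(\gparam)}|^2=\sigma_i(B_\gparam)$ for the continuity claim. Your handling of the case $\rank(B_\gparam)<n$ is in fact more explicit than the paper's proof, which tacitly assumes the orthonormality conclusion carries over.
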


\begin{proof}
Fix any $\gparam\in \gspace$. Let $B_\gparam:H\to H$,
$$B_\gparam(u):=\mathbb{E}\left[\langle u,X_\gparam\rangle X_\gparam\right],$$
be the ---uncentered--- covariance operator of $X_\gparam$. Let $\lambda_i^\gparam$ and $v_i^\gparam$ be the eigenvalues and eigenvectors of $B_\gparam$, sorted such that $\lambda_{i+1}^\gparam\le\lambda_{i}.$ Set $\eta_i:=\scl{X_\gparam}{v_i^\gparam}/\sqrt{\lambda_i^\gparam}$. Then, as we discussed in Lemma~\ref{lemma:pod}, it is well-known that the random variable 
$$Z_\gparam^*=\sum_{i=1}^{n}\eta_{\gparam,i}\sqrt{\lambda_i^\gparam}v_i^\gparam= \sum_{i=1}^{n}\langle X_\gparam, v_i^\gparam\rangle v_i^\gparam,$$
minimizes $\mathbb{E}\|X_\gparam-Z\|^2$ within $Q_n$.
We now notice that, since $B_\gparam\in\review{\trace(H)}$, and $(\review{\trace(H)},\|\cdot\|_1)$ embedds continuously in $(\review{\cmpts(H)},\opnorm{\cdot})$, by Theorem~\ref{theorem:svd} there exist measurable maps $\tilde{s}_i$ and $v_i$ such that $$\tilde{s}_i(\gparam)=\sigma_i(B_\gparam)=\lambda_i^\gparam\quad\text{and}\quad v_i(\gparam)=v_i^\gparam,$$
The first statement in the Theorem follows. Finally, we notice that if $\mathbb{E}\|X_\gparam-X_{\gparam'}\|^2\to 0$ for $\gparam\to\gparam'$, then the map $\gparam\mapsto X_\gparam$ is actually $\gspace\to L^2_H$ continuous. Since 
$$\mathbb{E}\left|\langle X_\gparam, v_i(\gparam)\rangle\right|^2 = \mathbb{E}\left|\sqrt{\lambda_i^\gparam}\eta_{\gparam,i}\right|^2 = \lambda_i^{\gparam} = \sigma_i(B_\gparam),$$
and $B_\gparam$ depends continuously on $\gparam$ through $X_\gparam$ (cf. Lemma~\ref{lemma:covariance}) the conclusion follows.
\end{proof}

\begin{theorem}
    \label{theorem:continuous-pod}
    Let $(\gspace,d_\gspace)$ be a metric space and let $(H,\|\cdot\|)$ be a separable Hilbert space. Let $\{X_\gparam\}_{\gparam\in \gspace}$ be a family of square integrable $H$-valued random variables.
    Assume that the map $\gparam\to X_\gparam$ is continuous from $\gspace\to L^2_H$. Fix any $n\in\mathbb{N}_+$.
    For every $\gparam\in \gspace$ let
    $B_\gparam\in\hilbs$ be the (uncentered) covariance operator
    $$B_\gparam(u)=\mathbb{E}[\langle X_\gparam, u\rangle X_\gparam],$$
    and $\lambda_1^\gparam,\dots,\lambda_n^\gparam,\lambda_{n+1}^\gparam$ be its $n+1$ largest eigenvalues, $\lambda_i^\gparam=\sigma_i(B_\gparam)$.   
    If, for every $\gparam\in \gspace$, one has the strict inequality
    \begin{equation}
        \label{eq:gap-pod}
        \lambda_n^\gparam>\lambda_{n+1}^\gparam\ge0,
    \end{equation}
    then, there exists a continuous map $\review{\gspace\to\cmpts(H)}$, mapping
    $\gparam\mapsto P_{\gparam}$, \review{such that, for every $\gparam\in\gspace,$ the operator $P_\gparam$ is an orthonormal projection of rank $n$ and}
    \begin{equation}
    \label{eq:optimality}    
    \mathbb{E}\left\|X_\gparam -P_\gparam X_\gparam\right\|^2=\inf_{P\in \mathscr{P}_n}\;\mathbb{E}\|X_\gparam-PX_\gparam\|^2,\end{equation}
    \review{where $\mathscr{P}_n:=\{P\in\cmpts(H)\;:\;P^2=P,\;P=P^*,\;\rank(P)\le n\}.$
    Additionally, if $(\gparam,d_{\gspace})$ is contractible to a point, then, for $i=1,\dots,n$, there exist continuous maps $v_i:\gspace\to H$ such that for all $\gparam\in\gspace$ one has $\scl{v_i(\gparam)}{v_j(\gparam)}=\delta_{i,j}$ and $$P_\gparam=\sum_{i=1}^{n}\scl{\cdot}{ v_i(\gparam)}v_i(\gparam).$$}
\end{theorem}

\begin{proof}
    \review{As discussed in Lemma~\ref{lemma:pod} and in the proof of Theorem~\ref{theorem:pod}, for each $\gparam\in\gspace$, the best orthonormal projection of rank $n$ is the one over the subspace spanned by the first $n$ eigevectors associated to the largest $n$ eigenvalues of $B_\gparam$. Then, thanks to the gap assumption in Eq. \eqref{eq:gap-pod}, the first part of the Theorem follows readily from Theorem~\ref{theorem:rieszprojection}. Concerning the existence of a continuously parametrized orthonormal basis, instead, this is once again a consequence of classical results in the theory of fiber bundles. Indeed, the same ideas used in the proof of Theorem~\ref{theorem:continuous-svd} can be applied to this setting. The conclusion follows.}
\end{proof}

\subsubsection{Application to Kosambi-Karhunen-Loève expansions}

We conclude this subsection with an application of Theorems~\ref{theorem:pod}-\ref{theorem:continuous-pod} to the context of random fields, where each realization of the random variable $X$ is in fact a function, or a trajectory, defined over a suitable spatial domain. Specifically, we focus on the case in which $H=L^{2}(\Omega)$ for some measurable set $\Omega\subset\mathbb{R}^{d}.$ 
In this case, rather than introducing measurable maps $v_i:\gspace\to L^{2}(\Omega)$, we directly frame the result in terms of multi-variable functions $v_i:\gspace\times\Omega\to\mathbb{R}$, which better reflects the notation \review{that is} commonly adopted in the literature. \review{For the sake of simplicity, we restrict our attention to parameter spaces of the form $\gspace = [0,1]^p$ for some $p$. While this assumption is not entirely necessary, it serves to avoid an unduly verbose formulation of the result that follows.}\\ 

\begin{corollary}[Parametric Kosambi-Karhunen-Loève expansion]
    \label{corollary:kkl}
    \review{Let $\gspace=[0,1]^p$} and let $\Omega\subseteq\mathbb{R}^{d}$ be Lebesgue measurable. Let $\{X_\gparam\}_{\gparam\in \gspace}$ be a family of stochastic processes defined over $\Omega$. 
    Assume that
    $\mathbb{E}\int_\Omega|X_\gparam(z)|^2dz<+\infty,$
    for all $\gparam\in \gspace.$ Additionally, assume that the map $(\gparam,z)\to X_\gparam(z)$ is measurable. Then, \review{for i=1,\dots, n, }there exist measurable maps
    $v_i:\gspace\times\Omega\to \mathbb{R}$
    and a family of random variables $\{\tilde{\eta}_{\gparam,1},\dots,\tilde{\eta}_{\gparam,n}\}_{\gparam\in \gspace}$ with finite second moment, such that $\int_\Omega v_i(\gparam,z) v_j(\gparam,z)dz=\delta_{i,j}$ and
    \begin{equation}
    \label{eq:param-kkl}
    \mathbb{E}\int_\Omega|X_\gparam(z) - \sum_{i=1}^{n}\tilde{\eta}_{\gparam,i}v_i(\gparam,z)| ^2dz=\inf_{Z\in Q_n}\;\mathbb{E}\int_{\Omega}|X_\gparam(z)-Z(z)|^2dz,\end{equation}
    for all $\gparam\in \gspace,$ where
    $Q_n = \left\{Z=\sum_{i=1}^{n}a_if_i\;:\;\mathbb{E}|a_i|^2<+\infty,\;f_i\in L^{2}(\Omega)\right\}.$
    \review{Furthermore, if one has $\mathbb{E}\int_\Omega|X_\gparam(z)-X_{\gparam'}(z)|^2dz\to 0$ whenever $\gparam\to\gparam'$, then
    \begin{itemize}
        \setlength{\itemsep}{0.5em}
        \item [(i)] if the family of kernels $K_{\gparam}:\Omega^2\to\mathbb{R}$, defined a.e. in $\Omega^2$ as
        $K_{\gparam}(z,y):=\mathbb{E}\left[X_\gparam(z)X_\gparam(y)\right],$
        admits a strict and uniform separation between the first $n$ eigenvalues of the kernels and the rest of their spectrum (listed in decreasing order), then the $v_i$'s can be constructed such that       
        \begin{equation}
            \label{eq:l2-continuity}
            \lim_{\gparam'\to\gparam}\int_{\Omega}|v_i(\gparam,z)-v_i(\gparam',z)|^2dz=0\quad\forall i=1,\dots,n,
        \end{equation}
        for all $\gparam\in\gspace$.
        \item[(ii)] if (i) holds, $\Omega$ is compact and the family of kernels is uniformily Lipschitz continuous, i.e. there exists some $L>0$ such that $|K_\gparam(x,y)-K_\gparam(x',y')|\le L(|x-x'|+|(y-y')|$ for all $x,x',y,y'\in \Omega$ and all $\gparam\in\gspace,$ then the maps $v_i=v_i(\gparam,z)$ are marginally Lipschitz continuous in $z$ and jointly continuous in $(\gparam,z).$ Furthermore,
        \begin{equation}
        \label{eq:sup-continuity}\lim_{\gparam'\to\gparam}\;\sup_{z\in\Omega}|v_i(\gparam,z)-v_i(\gparam',z)|=0\quad\forall i=1,\dots,n,\end{equation}
        for all $\gparam\in\gspace.$
    \end{itemize}}
\end{corollary}

\begin{proof}
    \review{The first part of the Theorem is just a re-statement of Theorem~\ref{theorem:pod}} applied to $H=L^{2}(\Omega)$. The latter, in fact, yields the existence of suitable measurable maps $V_i:\gspace\to L^{2}(\Omega)$
    such that $\int_{\Omega}V_i(\gparam)V_j(\gparam)=\delta_{i,j}$ and
    $$\mathbb{E}\|X_\gparam - \sum_{i=1}^{n}\left(\int_{\Omega}X_\gparam(z)V_i(\gparam)(z)dz\right)V_i(\gparam)\| ^2_{L^{2}(\Omega)}=\inf_{Z\in Q_n}\;\mathbb{E}\int_{\Omega}|X_\gparam(z)-Z(z)|^2dz.$$
    Then, letting 
    $\eta_{\gparam,i}=\int_{\Omega}X_\gparam(z)V_i(\gparam)(z)dz,$
    yields Eq. \eqref{eq:param-kkl}.    
    The only caveat concerns defining the maps $v_i:\gspace\times\Omega\to\mathbb{R}$ such that $v_i(\gparam,\cdot)=V_i(\gparam)$ almost everywhere in $\Omega$. In fact, pointwise evaluations of $L^{2}$ functions are a delicate matter, and we cannot simply set $v_i(\gparam,z)=V_i(\gparam)(z).$ However, we can easily circumvent this problem by \review{fixing a suitable orthonormal basis $\{e_k\}_{k=1}^{+\infty}$ of $L^2(\Omega)$, with a precise choice of the representatives $e_k:\Omega\to\mathbb{R},$ and then setting
    $$v_i(\gparam, x):=\sum_{k=1}^{+\infty}\langle V_i(\gparam), e_k\rangle_{L^2(\Omega)} e_k(x).$$
    In the same spirit, we notice that \emph{(i)} is just a re-statement of Theorem~\ref{theorem:continuous-pod}.}

    \review{For what concerns \emph{(ii)}, instead, let $\lambda_i^\gparam$ denote the $i$th eigenvalue of $K_\gparam$ and let $\tilde{v}_i^\gparam$ be a corresponding eigenfunction. Then
    $$\tilde{v}_i^\gparam(z)=\frac{1}{\lambda_i^\gparam}\int_{\Omega}K_\gparam(z,y)\tilde{v}_i(y)^\gparam dy.$$
    a.e. in $\Omega$. In particular, 
    $|\tilde{v}_i^\gparam(z)-\tilde{v}_i^\gparam(z')|\le\frac{1}{\lambda_i^\gparam}\int_{\Omega}|K_\gparam(z,y)-K_\gparam(z',y)||\tilde{v}_i(y)|dy\le L|\Omega|^{1/2}(\lambda_i^\gparam)^{-1}|z-z'|$ for all $z,z'\in\Omega$, where we used the Lipschitz property of $K_\gparam$ and the orthonormality of $\tilde{v}_i^\gparam$ in $L^2(\Omega)$ combined with Hölder's inequality. Thus, for all $\gparam\in\gspace,$ the subspace spanned by the eigenfunctions associated to $\{\lambda_i^\gparam\}_{i=1}^{n}$ consists of Lipschitz functions with a uniformly bounded Lipschitz constant, the bound being
    $$L':=\frac{L|\Omega|^{1/2}}{\min_{\gparam\gspace}\lambda_i^\gparam}<+\infty.$$
    Notice that the above is finite thanks to the gap assumption, the compactness of $\gspace$ and the continuity of the eigenvalues with respect to $\gparam.$ Since, by construction,
    $$v_i(\gparam,\cdot)\in\text{span}\{\tilde{v}_i^\gparam\}_{i=1}^{n},$$
     the $v_i(\gparam,\cdot)$'s are guaranteed to be Lipschitz in $z$, with a Lipschitz constant no larger than $nL'$.}

    \review{Next, to see that Eq. \eqref{eq:sup-continuity} holds, fix any $\gparam\in\gspace$ and, seeking contraddiction, assume there exists some $\varepsilon>0$ and a suitable sequence $\gparam_k\to\gparam$ such that $\sup_{z\in\Omega}|v_i(\gparam,z)-v_i(\gparam_k,z)|>\varepsilon$ for all $k.$ Thanks to the previous observation, the sequence $\{v_i(\gparam_k,\cdot)\}_{k=1}^{+\infty}$ is equicontinuous and equibounded. In particular, by the Arzelà–Ascoli Theorem, there exists a subsequence $\{v_{i}(\gparam_{k_j},\cdot)\}_j$ converging to a suitable limit in the supremum norm. However, by \emph{(ii)}, we know that $v_{i}(\gparam_{k_j},\cdot)\to v_{i}(\gparam,\cdot)$ in the $L^2$ sense. Then, by uniqueness of the limit, $v_{i}(\gparam_{k_j},\cdot)\to v_{i}(\gparam,\cdot)$ in supremum norm: contraddiction.}

    \review{The joint continuity in $(\gparam,z)$ is now trivial since, for every $\gparam,\gparam'\in\gspace$ and every $z,z'\in\Omega$ we have
    \begin{multline*}
        |v_i(\gparam,z)-v_i(\gparam',z')|\le|
        v_i(\gparam,z)-v_i(\gparam',z)|+
        |v_i(\gparam',z)-v_i(\gparam',z')|\\\le \sup_{z\in \Omega}|
        v_i(\gparam,z)-v_i(\gparam',z)| + nL'|z-z'|,     
    \end{multline*}
    which vanishes whenever $\gparam'\to\gparam$ and $z'\to z.$}
\end{proof}

\label{sec:corollaries}
\label{SEC:COROLLARIES}

\section{\review{Implications for deep learning models serving as surrogates for parametric low-rank approximation}}

\review{In this Section, we aim to apply the theory developed so far to a recently emerging class of techniques for parametric low-rank approximation, that is, methods based on deep neural network architectures; see, e.g., \cite{berman2024colora, franco2024deep, brivio2024handling}. For the sake of simplicity, in what follows, we shall assume all Hilbert spaces to be finite-dimensional, $H_1=H_2=H=\mathbb{R}^N$, and the parameter space to be of the form $\gspace=[0,1]^p$. We shall start with a brief introduction to the topic, followed by a presentation of two Corollaries directly descending from our analysis in Section~\ref{sec:results}. Lastly, we will present some numerical experiments illustrating how the theory translates into practice.}
\\\\
\review{To put this into context, assume we are given a parametrized matrix, $\mathbf{A}_\gparam\in\mathbb{R}^{N\times N}$. We are interested in computing a low-rank approximation of $\mathbf{A}_\gparam$ for varying $\gparam\in\gspace.$ As we mentioned in the Introduction, in principle, this could be achieved by running an SVD for each $\gparam\in\gspace$, and computing $\mathbf{A}_\gparam^n$ by truncating the expansion. However, this approach is ---in general--- computationally unfeasible, as it requires solving an expensive singular value problem multiple times. For this reason, one might be interested in a surrogate model, $\tilde{\mathbf{A}}:\gspace\to\mathbb{R}^{N\times N}$ constructed such that $$\rank(\tilde{\mathbf{A}}(\gparam))\le n\quad\text{and}\quad\tilde{\mathbf{A}}(\gparam)\approx \mathbf{A}_\gparam.$$}

\review{In practice, dealing with the large dimension at output, $N^2$, and guaranteeing that the model only returns matrices of rank no larger than $n$, is not immediate. These issues however, can be addressed by reformulating the learning problem as the search for two maps, $\tilde{\mathbf{U}},\tilde{\mathbf{V}}:\gspace\to\mathbb{R}^{N\times n}$, such that
\begin{equation}
    \label{eq:nnapprox}
    \mathbf{A}_\gparam\approx \tilde{\mathbf{V}}(\gparam)\tilde{\mathbf{U}}^\top(\gparam).
\end{equation}
Clearly, other parametrizations are also possible. In the case of deep learning surrogates, the matrix-valued maps $\tilde{\mathbf{U}}$ and $\tilde{\mathbf{V}}$ are constructed using suitable neural network architectures. Here, a map from $\tilde{\mathbf{B}}:[0,1]^p\to\mathbb{R}^{N\times n}$ is said to be a neural network with activation function $\rho:\mathbb{R}\to\mathbb{R}$ if its action can be written as
\begin{equation}
    \label{eq:nn}
    \tilde{\mathbf{B}}(\gparam)=(R\circ T_\ell \circ \rho \circ T_{\ell-1}\ldots\circ \rho\circ T_1)(\gparam),
\end{equation}
where $\ell\in\mathbb{N}$, each $T_j$ is an affine transformation from $\mathbb{R}^{m_{j-1}}$ to $\mathbb{R}^{m_{j}}$, $m_0=p$, $m_{\ell}=Nn$, and $R:\mathbb{R}^{Nn}\to\mathbb{R}^{N\times n}$ is a reshape operator mapping a vector of length $Nn$ into a matrix of shape $N\times n$, with entries being filled row-wise. Notice also that, here, with little abuse of notation, the scalar map $\rho$ is being applied componentwise on vectors.
}

\review{For fixed $\rho$, let us denote by $\mathscr{N}_\rho(p,N,n)$ the subset of all neural network architectures from $[0,1]^p$ to $\mathbb{R}^{N\times n}$, obtained by varying $\ell$ and the affine operators $T_j$ in Eq. \eqref{eq:nn}. The learning problem thus consists in seeking for two maps $$\tilde{\mathbf{U}},\tilde{\mathbf{V}}\in\mathscr{N}_\rho(p,N,n),$$ such that Eq. \eqref{eq:nnapprox} holds (in a suitable sense). The notable thing about using neural networks is that, if $\rho$ is continuous but not a polynomial, then the set $\mathscr{N}_\rho(p,N,n)$ is dense in $C(\gspace;\mathbb{R}^{N\times n})$ and in $L^q_{\mu}(\gspace;\mathbb{R}^{N\times n})$ for all $q\in[1,+\infty)$ and all finite measures $\mu$ over $\gspace.$ This is a consequence of the well-known universal approximation theorems: see, e.g., \cite{hornik1991approximation}. Consequently, a deep learning surrogate can become as accurate as any parametric low-rank approximant that is available in those two classes. In particular, if suitable conditions are satisfied, a method of this kind can reach levels of accuracy that are arbitrarily close to the ones of parametric SVD --- thus reaching the ideal optimum. Most of the times, such a convergence will be possible in an $L^p_\mu$-sense, coherently with the measurability of parametric SVD in Theorem~\ref{theorem:svd}. In contrast, we expect a uniform convergence over $\gparam\in\gspace$ to be possible only for those problems which admit a continuous version of the parametric SVD ---specifically, when the gap condition in Theorem~\ref{theorem:continuous-svd} holds.}

\review{In general, the same ideas can be transferred to the case of parametric low-rank approximation of random variables. We formalize these facts in the two Corollaries right below. Given the simplicity of their derivation, we only report the proof of the first one. In the next subsection, instead, we shall illustrate how these theoretical results can effectively be observed in practice.}\\

\begin{corollary}
    \label{corollary:parametric-svd}
    \review{Let $\gspace=[0,1]^p$ be equipped with some finite measure $\mu$. Let $\gspace\ni\gparam\to \mathbf{A}_\gparam\in\mathbb{R}^{N\times N}$ be a measurable map such that $\int_{\gspace}\opnorm{\mathbf{A}_\gparam}^2\mu(d\gparam)<+\infty.$
    Fix any integer $0<n\le N$ and any continuous nonpolynomial activation function $\rho$. Then, for every $\varepsilon>0$, there exist $\tilde{\mathbf{U}}_{n}, \tilde{\mathbf{V}}_{n}\in\mathscr{N}_\rho(p,N,n)$ such that
        \begin{equation}
        \label{eq:nnsvd1}\int_{\gspace}\opnorm{\mathbf{A}_\gparam-\tilde{\mathbf{V}}_{n}(\gparam)\tilde{\mathbf{U}}_{n}^\top(\gparam)}\mu(d\gparam)<\varepsilon+\int_{\gspace}\sigma_{n+1}(\mathbf{A}_{\gparam})\mu(d\gparam).\end{equation}
    Additionally, if $\mathbf{A}_\gparam$ depends continuously on $\gparam$ and $\sigma_{n}(\mathbf{A}_\gparam)>\sigma_{n+1}(\mathbf{A}_\gparam)$ for all $\gparam\in\gspace$, then the two networks, $\tilde{\mathbf{U}}_{n}$ and $ \tilde{\mathbf{V}}_{n}$, can be chosen such that
     \begin{equation}
         \label{eq:nnsvd2}
         \sup_{\gparam\in\gspace}\opnorm{\mathbf{A}_\gparam-\tilde{\mathbf{V}}_{n}(\gparam)\tilde{\mathbf{U}}_{n}^\top(\gparam)}<\varepsilon+\sup_{\gparam\in\gspace}\;\sigma_{n+1}(\mathbf{A}_{\gparam}).
     \end{equation}
    Finally, the same holds true if, in all of the above, the operator norm is substituted with the Hilbert-Schmidt norm and $\sigma_{n+1}(\mathbf{A})_\gparam$ is replaced by $\left[\;\sum_{j=n+1}^{N}\sigma_{j}(\mathbf{A}_{\gparam})^2\right]^{1/2}.$
    }
\end{corollary}
\begin{proof}
    \review{Up to adapting the notation, we notice that Theorem~\ref{theorem:svd} guarantees the existence of measurable maps $\mathbf{U}_n,\mathbf{V}_n:\gspace\to\mathbb{R}^{N\times n}$ and $\boldsymbol{\Sigma}_n:\gspace\to\mathbb{R}^{n\times n}$ such that
    $$\opnorm{\mathbf{A}_\gparam-\mathbf{V}_{n}(\gparam)\boldsymbol{\Sigma}_n(\gparam)\mathbf{U}_{n}^\top(\gparam)}<\sigma_{n+1}(\mathbf{A}_{\gparam})\quad\forall\gparam\in\gspace.$$
    Define $\hat{\mathbf{U}}_n(\gparam):=\mathbf{U}_n(\gparam)\boldsymbol{\Sigma}_n(\gparam).$ Since $\mathbf{V}_n$ and $\mathbf{U}_n$ take values in the set of orthonormal matrices, whereas $\boldsymbol{\Sigma}_n$ contains the first $n$ singular values of $\mathbf{A}_\gparam$, we have, pointwise in the parameter space,
    $$\opnorm{\mathbf{V}_n(\gparam)}\le1\quad\text{and}\quad\opnorm{\hat{\mathbf{U}}_n}\le\opnorm{\boldsymbol{\Sigma}_n(\gparam)}=\sigma_{1}(\mathbf{A}_\gparam)=\opnorm{\mathbf{A}_\gparam}.$$
    In particular, by hypothesis, all of the above are square-integrable over $\gparam\in\gspace$, meaning that $\mathbf{V}_n,\hat{\mathbf{U}}_n\in L^2(\gspace;\mathbb{R}^{N\times N})$, the co-domain of the Bochner space being equipped with the operator norm. In light of this, consider the functional $J:L^2(\gspace;\mathbb{R}^{N\times n})\times L^2(\gspace;\mathbb{R}^{N\times n})\to\mathbb{R}$ defined as
    $$J(\mathbf{U},\mathbf{V}):=\int_\gspace\opnorm{\mathbf{A}_\gparam-\mathbf{V}(\gparam)\mathbf{U}^\top(\gparam)}\mu(d\gparam).$$ Since $\mathbf{A}=\mathbf{A}_\gparam$ is in $L^2(\gspace;\mathbb{R}^{N\times N})\subset L^1(\gspace;\mathbb{R}^{N\times N})$ and $\mathbf{V},\mathbf{U}\in L^2(\gspace;\mathbb{R}^{N\times n})$ implies $\mathbf{V}\mathbf{U}^\top\in\mathbf{V}_n,\hat{\mathbf{U}}_n\in L^1(\gspace;\mathbb{R}^{N\times N})$ ---by Hölder's inequality---, it is straightforward to see that $J$ is continuous. Thus, by density,
    $$\inf_{\tilde{\mathbf{U}},\tilde{\mathbf{V}}\in\mathscr{N}_\rho(p,N,n)}J(\tilde{\mathbf{U}},\tilde{\mathbf{V}}) = \inf_{\mathbf{U},\mathbf{V}\in L^2(\gspace;\mathbb{R}^{N\times n})}J(\mathbf{U},\mathbf{V})=J(\hat{\mathbf{U}}_n,\mathbf{V}_n)=\int_{\gspace}\sigma_{n+1}(\mathbf{A}_\gparam)\mu(d\gparam).$$
    Eq. \eqref{eq:nnsvd1} now follows easily. The other inequality, Eq. \eqref{eq:nnsvd2}, instead, can be proven analogously using the functional $\tilde{J}(\mathbf{U},\mathbf{V}):=\sup_{\gparam\in\gspace}\opnormsmall{\mathbf{A}_\gparam-\mathbf{V}(\gparam)\mathbf{U}_\gparam^\top}$ and the density of $\mathscr{N}_\rho(p,N,n)$ in $C(\gspace;\mathbb{R}^{N\times n})$. Notice that this is only possible thanks to the gap and the continuity assumptions, which, by Theorem~\ref{theorem:continuous-svd} guarantee the existence of a suitable optimum in $C(\gspace;\mathbb{R}^{N\times n}).$ Finally, in both cases, the Hilbert-Schmidt case follows similarly.}
\end{proof}

\begin{corollary}
\label{corollary:parametric-pod}
 \review{Let $\gspace=[0,1]^p$ be equipped with some finite measure $\mu$. Let $\{X_\gparam\}_{\gparam\in\gspace}$ be a family of random variables in $\mathbb{R}^{N}$ with finite second moment.} Assume that the map $\xi\to X_\gparam$ is measurable and
$\int_{\gspace}\mathbb{E}^{1/2}\|X_\gparam\|^2\mu(d\gparam)<+\infty.$
For every $\gparam\in \gspace$, let $\lambda_{1}^\gparam\ge\dots\ge\lambda_{N}^\gparam\ge0$ be the eigenvalues of the uncentered covariance matrix
$\mathbf{C}_{\gparam}:=(\mathbb{E}[X_\gparam^iX_\gparam^j])_{i,j=1}^{N}.$
Fix any integer $0<n\le N$ and a \review{continuous nonpolynomial activation function $\rho.$} Then, for every $\varepsilon>0$ there exists some 
$\tilde{\mathbf{V}}_n\in\review{\mathscr{N}_\rho(p,N,n)}$ such that
    \begin{equation}
    \label{eq:pod-int-gap}\review{\int_{\gspace}\mathbb{E}^{1/2}\|X_\gparam-\tilde{\mathbf{V}}_n(\gparam)\tilde{\mathbf{V}}_n^\top(\gparam)X_\gparam\|^2\mu(d\gparam)< \varepsilon + \int_{\gspace}\left[\;\sum_{i=n+1}^{N}\lambda_{i}^\gparam\right]^{1/2}\mu(d\gparam).}\end{equation}
\review{Additionally, if $\lambda_{n}^\gparam>\lambda_{n+1}^\gparam$ for every $\gparam\in\gspace$ and $\mathbb{E}\|X_\gparam-X_{\gparam'}\|^2\to0$ whenever $\gparam'\to\gparam$, then the network $\tilde{\mathbf{V}}_n$ can be constructed such that
\begin{equation}
    \label{eq:pod-sup-gap}
\sup_{\gparam\in\gspace}\;\mathbb{E}^{1/2}\|X_\gparam-\tilde{\mathbf{V}}_n(\gparam)\tilde{\mathbf{V}}_n^\top(\gparam)X_\gparam\|^2< \varepsilon + \sup_{\gparam\in\gspace}\left[\;\sum_{i=n+1}^{N}\lambda_{i}^\gparam\right]^{1/2}.\end{equation}}
\end{corollary}

\begin{proof}
    \review{This is just a straightforward application of Theorems~\ref{theorem:pod}-\ref{theorem:continuous-pod}. Specifically, the proof can be derived following the same ideas presented in the proof of Corollary~\ref{corollary:parametric-svd}.}
\end{proof}

\begin{remark}
    \review{Clearly, several of the  assumptions in Corollaries~\ref{corollary:parametric-svd}-\ref{corollary:parametric-pod} can be relaxed. For instance, $\gspace$ can be any compact contractible domain, the latter property only being required when considering worst case metrics (supremum norm). Additionally, in Corollary~\ref{corollary:parametric-svd}, the result in Eq. \eqref{eq:nnsvd1} can be easily adapted to the case in which one uses three networks, $\tilde{\mathbf{V}}_n,\tilde{\boldsymbol{\Sigma}}_n,\tilde{\mathbf{U}}_n$, designed to mimic the SVD. Here, however, we refrain from doing so. The reason is that such a representation may not always be possible when considering worst case metrics. This is related to continuity issues, which may arise if the singular values $\sigma_1,\dots,\sigma_n$ cross each other several times. At its core, this is essentially the reason why, in Theorem~\ref{theorem:continuous-svd}, the basis vectors $v_i(\gparam)$ are guaranteed to be orthonormal, whereas the $\tilde{u}_i(\gparam)$ may fail to satisfy such constraint.}\\
\end{remark}

\begin{remark}
\review{Corollary~\ref{corollary:parametric-pod} can be easily generalized to accommodate any norm $\|\cdot\|_{\mathbf{M}}$ defined over $\mathbb{R}^N$ that is compatible with an inner product structure. In fact, if $\|\mathbf{v}\|_{\mathbf{M}}:=\sqrt{\mathbf{v}^\top\mathbf{M}\mathbf{v}}$ for a suitable positive definite matrix $\mathbf{M}=(m_{i,j})_{i,j}$, then one can readily apply Corollary~\ref{corollary:parametric-pod} up to replacing $\mathbf{C}_\gparam$ with$$\mathbf{C}^{\mathbf{M}}_\gparam:=\left(\mathbb{E}\left[\sum_{k=1}^N m_{k,j}X_\gparam^iX_\gparam^k\right]\right)_{i,j=1}^{N}.$$
    In fact, the above is the matrix representation of the operator  $\mathbf{v}\mapsto \mathbb{E}[\scl{\mathbf{v}}{X_\gparam}_{\mathbf{M}}X_\gparam]=\mathbb{E}[\mathbf{v}^\top\mathbf{M}X_\gparam X_\gparam].$
    }
\end{remark}

\subsection{\review{Numerical experiments}}

\review{We present some numerical experiments where we assess the empirical evidence of our findings, specifically focusing on the results in Corollaries~\ref{corollary:parametric-svd}-\ref{corollary:parametric-pod}. To this end, we consider the following model problems and methodologies.}

\review{\paragraph{Problems under study}
We consider four different problems: the first two are devoted to the low-rank approximation of parametrized matrices [\textbf{PM}], whereas the other two discuss the problem of dimensionality reduction for high-dimensional random variables [\textbf{RV}]. In both cases, we provide an example where the gap condition holds and one where it doesn't. For simplicity, in all of the case studies, we set the low-rank dimension to be $n=2.$}

\review{ We remark that all case studies have been handcrafted to analyze the behavior of surrogate models under precise circumstances. In particular, we are not interested in studying the performances of these models in tackling complex real-world applications characterized by elevated computational costs, nor to compare these techniques with other alternatives in the literature. Our sole purpose is to assess whether our findings in Section~\ref{sec:results} are supported by empirical evidence.}
\\\\
\review{\textbf{PM1.} Let $\gspace=[0,1]$. We consider a $4\times 4$ matrix parametrized as
$$\mathbf{A}_\gparam = \left[
\begin{array}{cccc}
\gparam & 0 & 0 & 0 \\
0       & \gparam/2 & 0 & 0 \\
0 & 0 & 1-\gparam & 0 \\
0 & 0 & 0 & 2\gparam^2
\end{array}\right].$$
This case study serves as a representative for parametric problem that do not satisfy the gap condition.}
\\\\
\review{\textbf{PM2.} Let $\gspace=[0,1]$. We consider a $4\times 4$ matrix parametrized as
$\mathbf{A}_\gparam = \left[
\begin{array}{cc}
\mathbf{B}_\gparam & \mathbf{0}^\top\\
\mathbf{0}       & 1
\end{array}\right],$
where $\mathbf{0}=[0,0,0]^\top,$ and $\mathbf{B}_\gparam$ is a $3\times 3$ block given by
$$\mathbf{B}_\gparam=\left[\begin{array}{ccc}
     4 & 0 & 0\\
     0 & 1 + \gparam & 0\\
     0 & 0 & 2 + \gparam
\end{array}\right]\cdot\left(\mathbf{I}_3 + \sin\gparam\cdot\mathbf{K}+(1-\cos\gparam  )\cdot\mathbf{K}^2\right),\quad\text{where}\quad\mathbf{K}=\left[\begin{array}{ccc}
     0 & -1 & 1\\
     1 & 0 & -1\\
     -1 & 1 & 0 
\end{array}\right].$$
In this case, one can easily verify that the gap condition holds true. Intuitively, the parametric block $\mathbf{B}_\gparam$ is being constructed using a parametrized diagonal matrix and a parametric rotation obtained via Rodrigues' rotation formula \cite{rodrigues1840lois}.}
\\\\
\review{\textbf{RV1.} Let $\gspace=[0,1]^2$. Let $V_h$ be the Finite Element space of continuous piecewise-linear maps constructed on top of a uniform grid of stepsize $h=0.02$ defined over $\Omega=(-1,1).$ Let $N:=\dim(V_h)=1001.$ As a state space, we consider the Euclidean space arising from the algebraic representation of $V_h$, that is, $(\mathbb{R}^N,\|\cdot\|_{\mathbf{M}})\cong (V_h,\|\cdot\|_{L^2(\Omega)})$, with $\mathbf{M}$ the Gram matrix associated to the Lagrangian basis of $V_h$ and the $L^2$-inner product.}

\review{Let $\{x_j\}_{j=1}^{N}$ be the nodes of the finite element grid. For every $\gparam=(\gparam^{(1)},\gparam^{(2)})$, we let  $q_\gparam:=\gparam^{(1)}-0.5$ and $\delta_\gparam:=75\gparam^{(2)}+25$. We consider a parameter dependent random variable given by
$$X_\gparam = \left[\sum_{k=1}^{N}10(k+1)^{-2}\eta_k e^{-\delta_\gparam|x_j-q_\gparam|^2}\sin(k\pi(x_j-q_\gparam)/2)\right]_{j=1}^{N}$$
corresponding to the discretization of a suitable random field defined over $\Omega$. Here, $\eta_1,\dots,\eta_{N}$ are independent standard gaussians. This case study serves as a model problem for the scenario in which the family $X_\gparam$ satisfies the gap condition.}
\\\\
\review{\textbf{RV2.} We consider the same setting as in RV1 but parametrize the discrete random field as
$$X_\gparam = \left[1+\sum_{k=1}^{200}\left(\tau_\gparam e^{-\delta_\gparam |7-k|^2} + (1-\tau_\gparam)e^{-\delta_\gparam k^2}\right)\eta_k\sin(k\pi x_j)\right]_{j=1}^{N},$$
where $\delta_\gparam=0.9\gparam^{(1)}+0.1$ and $\tau_\gparam=\gparam^{(2)}$. This time, the gap condition at $n=2$ fails. Essentially the parameters are used to change the dominating frequencies in the random field, which causes crossing of the eigenvalues in the covariance matrix.}

\review{\paragraph{Methods}
For all of the case studies, we construct deep learning-based parametric low-rank approximants of different complexities, monitoring whether the accuracy of these models improves or not when considering more complex architectures. The results in Corollaries~\ref{corollary:parametric-svd}-\ref{corollary:parametric-pod}, in fact, are based on density arguments. In particular, one expects that, by increasing the expressivity of the models and the number of training data, the $\varepsilon$-gap in Eqs. \eqref{eq:nnsvd1}-\eqref{eq:pod-sup-gap} can be made smaller and smaller. To evaluate the quality of the approximation, we consider two different metrics: one based on the supremum norm (worst-case scenario) and one based on the integral norm (average case). Precisely, for problems $\textbf{PM1-2}$, we rely on the following estimators
$$E_{\sup}^{\text{pm}}:=\sup_{k=1,\dots,M}\|\mathbf{A}_{\gparam_k}-\tilde{\mathbf{A}}_n(\gparam_k)\|_{F},\quad\quad E_{\text{avg}}^{\text{pm}}:=\frac{1}{M}\sum_{i=1}^{M}\|\mathbf{A}_{\gparam_k}-\tilde{\mathbf{A}}_n(\gparam_k)\|_{F},$$
with $\|\cdot\|_{F}$ the Frobenius norm ---equivalently, the Hilbert-Schmidt norm.
Both quantities are computed on values of the problem parameter that were not included in the training set, sampled randomly and uniformly across $\gspace$. Here, we use $M=1001.$ For problems $\textbf{RV1-2}$, instead, we consider the following metric,
$$E_{\sup}^{\text{rv}}:=\sup_{k=1,\dots,M}\left[\frac{1}{S}\sum_{l=1}^{S}\|X_{\gparam_k}^{(l)}-\tilde{\mathbf{V}}(\gparam_k)\tilde{\mathbf{V}}(\gparam_k)X_{\gparam_k}^{(l)}\|\right]^{1/2},$$
and $$E_{\text{avg}}^{\text{rv}}:=\frac{1}{M}\sum_{k=1}^{M}\left[\frac{1}{S}\sum_{l=1}^{S}\|X_{\gparam_k}^{(l)}-\tilde{\mathbf{V}}(\gparam_k)\tilde{\mathbf{V}}(\gparam_k)X_{\gparam_k}^{(l)}\|\right]^{1/2},$$where, for each $k=1,\dots, M$, $X^{(1)}_{\gparam_k},\dots,X^{(S)}_{\gparam_k}$ is an i.i.d. random sample of size $S$ drawn from $X_{\gparam_k}.$ The two formulae are obtained by approximating the expected values in \eqref{eq:pod-int-gap}-\eqref{eq:pod-sup-gap} via plain Monte Carlo. In our experiments we use $M=200$ and $S=50$}.

\review{We construct the low-rank approximants following the ideas presented in Corollaries~\ref{corollary:parametric-svd}-\ref{corollary:parametric-pod}. Specifically, for problems $\textbf{PM1-2}$, we set $\tilde{\mathbf{A}}_n(\gparam)=\tilde{\mathbf{V}}(\gparam)\tilde{\mathbf{U}}^\top(\gparam)$ where $\tilde{\mathbf{V}},\tilde{\mathbf{U}}:\gspace\to\mathbb{R}^{N\times n}$ are matrix-valued deep neural networks obtained by minimizing the following loss function,
$$\mathscr{L}(\tilde{\mathbf{U}}, \tilde{\mathbf{V}}):=\frac{1}{L}\sum_{j=1}^{L}\|\mathbf{A}_{\tilde{\gparam}_j}-\tilde{\mathbf{V}}(\tilde{\gparam}_j)\tilde{\mathbf{U}}^\top(\tilde{\gparam}_j)\|_{F}^2.$$
Here, $\tilde{\gparam}_{1},\dots,\tilde{\gparam}_{L}$ are suitable training points, drawn randomly and uniformly across $\gspace.$}

\review{As we anticipated, in order to investigate whether some kind of convergence is happening, we are interested in repeating the same experiment for varying architectures and different sample sizes. To accommodate this fact, we propose setting the number of training points $L$ equal to the total number of trainable parameters in the deep learning model. This is to ensure that more complex architectures are trained on proportionally more data, helping to reduce the risk of overfitting.}

\review{For problems $\textbf{RV1-2}$ we proceed similarly but train the matrix-valued network $\tilde{\mathbf{V}}:\gspace\to\mathbb{R}^{N\times n}$ by minimizing the loss function below
$$\mathscr{L}(\tilde{\mathbf{V}}):=\frac{1}{L}\sum_{j=1}^{L}\|\tilde{X}_{\tilde{\gparam}_k} - \tilde{\mathbf{V}}(\tilde{\gparam}_k)\tilde{\mathbf{V}}^\top(\tilde{\gparam}_k)\tilde{X}_{\tilde{\gparam}_k}\|^2,$$
with each $\tilde{X}_{\tilde{\gparam}_k}$ being a random realization of $X_{\tilde{\gparam}_k}.$}

\review{All case studies were implemented in Python using the \textit{pytorch} library \cite{paszke2019pytorch} and the \textit{dlroms} package \cite{franco2024dlroms}. Experimental data and source codes are available upon request to the author.}

\review{\paragraph{Results}
Our results are summarized in Figures~\ref{fig:svd1}-\ref{fig:pod2}. There, we compare the accuracy of the deep learning models against the ideal lower-bound given by parametric SVD and parametric POD, respectively. In particular, we do not report $E_{\sup}^{\text{pm}}$, $E_{\text{avg}}^{\text{pm}}$, $E_{\sup}^{\text{rv}}$ and $E_{\text{rv}}^{\text{pm}}$ directly, but first subtract the ideal error to ease inspection. Intuitively, such discrepancy can be thought as a quantification of the constant $\varepsilon>0$ appearing in Corollaries~\ref{corollary:parametric-svd}-\ref{corollary:parametric-pod}.}

\begin{figure}
    \centering
    \includegraphics[width=0.99\linewidth]{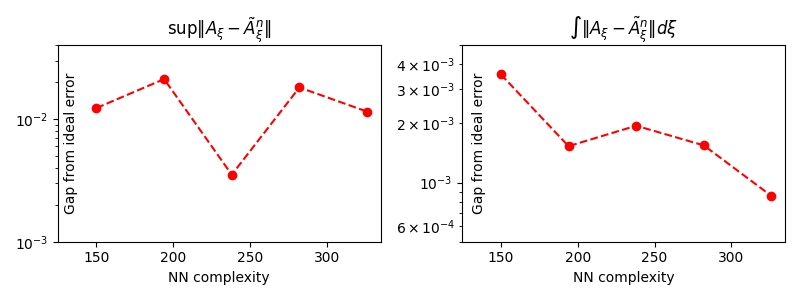}
    \includegraphics[width=0.99\linewidth]{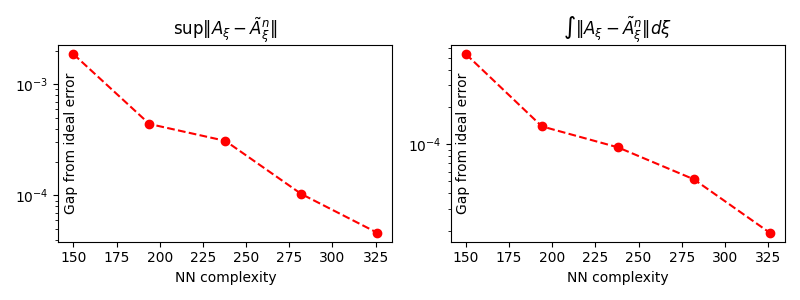}
    \caption{\review{Parametric low-rank approximation of a matrix. Error analysis for problems \textbf{PM1} (top) and \textbf{PM2} (bottom). $x$-axis: complexity of the overall neural network architecture, defined as the total number of trainable parameters in the model. $y$-axis: difference between the error of the model ($E_{\sup}^{\text{pm}}$ on the left, $E_{\text{avg}}^{\text{pm}}$ on the right) and the ideal error provided by parametric SVD. The latter were computed by estimating the Hilbert-Schmidt equivalent of the terms at the right-hand-side of \eqref{eq:nnsvd2} and \eqref{eq:nnsvd1} with $\varepsilon=0.$}}
    \label{fig:svd1}
\end{figure}

\begin{figure}
    \centering
    \includegraphics[width=0.99\linewidth]{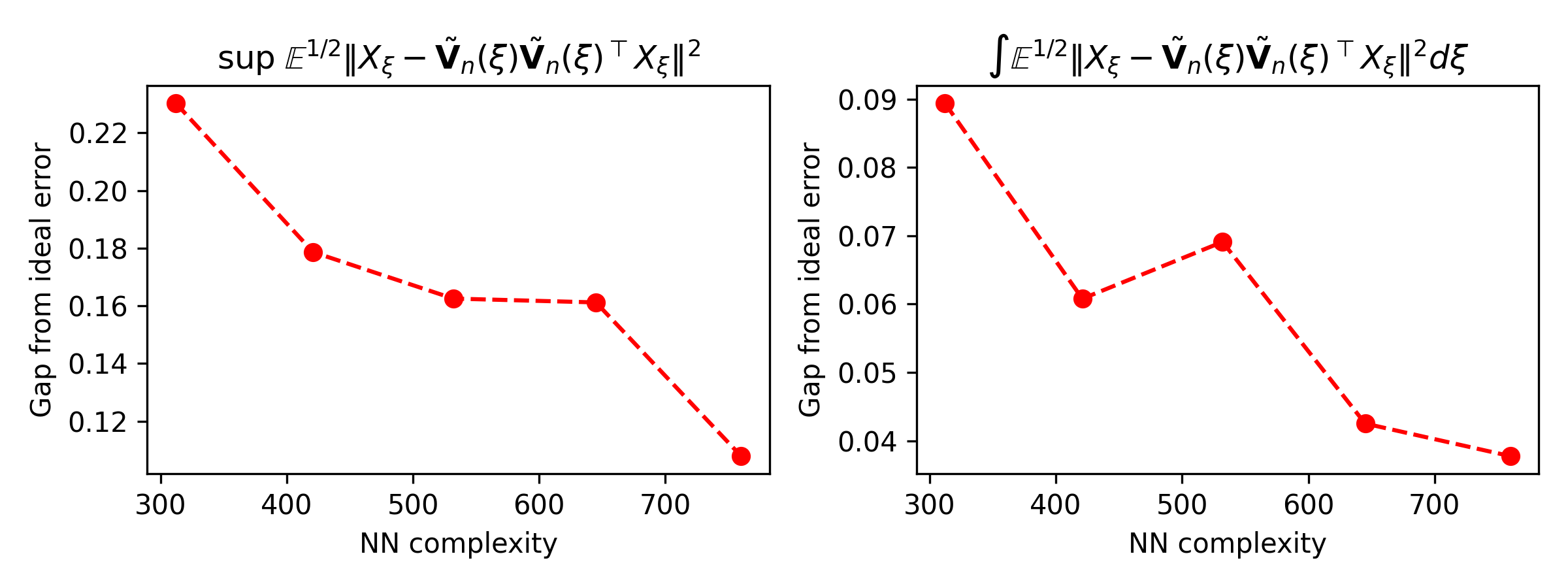}
    \includegraphics[width=0.99\linewidth]{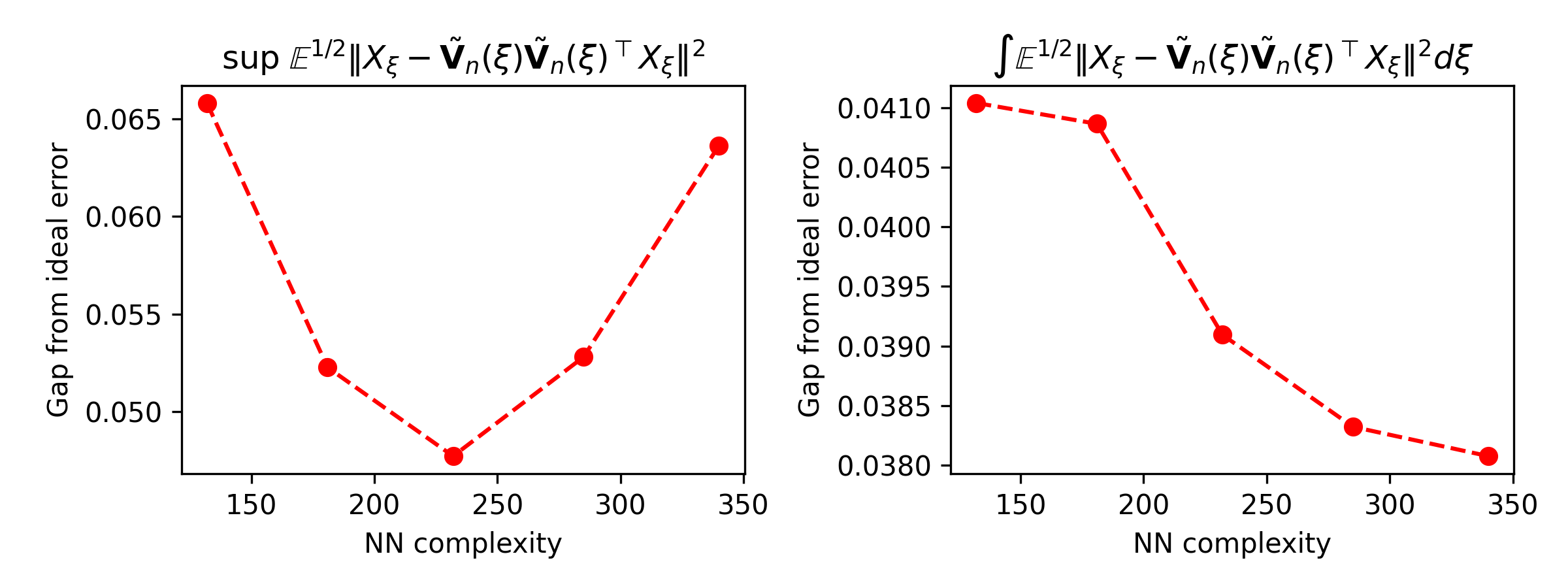}
    \caption{\review{Parametric low-rank approximation of a discretized random field. Error analysis for problems \textbf{RV1} (top) and \textbf{RV2} (bottom). $x$-axis: complexity of the overall neural network architecture, defined as the total number of trainable parameters in the model. $y$-axis: difference between the error of the model ($E_{\sup}^{\text{rv}}$ on the left, $E_{\text{avg}}^{\text{rv}}$ on the right) and the ideal error provided by parametric POD. The latter were computed by estimating the terms at the right-hand-side of \eqref{eq:pod-sup-gap} and \eqref{eq:pod-int-gap} with $\varepsilon=0.$}}
    \label{fig:pod2}
\end{figure}

\review{
We begin by observing that the gap between the actual error and the ideal error is always nonnegative, consistent with the known optimality of parametric SVD and POD. In general, this gap tends to shrink with increasing model complexity and additional training data, suggesting convergence of the deep learning surrogates toward the ideal solution. However, this trend holds primarily for average-case metrics: when considering worst-case performance, instead, a clear improvement is observed in only two out of four scenarios.}

\review{Notably, this outcome is not unexpected and aligns well with Corollaries~\ref{corollary:parametric-svd}–\ref{corollary:parametric-pod}. Specifically, we notice that in those cases where the spectral gap condition is satisfied (\textbf{PM2} and \textbf{RV1}), the surrogates improve under both average and worst-case metrics. Conversely, when the spectral condition fails (\textbf{PM1} and \textbf{RV2}), convergence toward the parametric SVD and POD is seen only in average metrics.}

\review{Intuitively, this is related to the fact that neural network surrogates are inherently continuous with respect to $\gparam$, whereas parametric SVD and POD are not necessarily so. As a result, one can expect convergence to happen in the integral sense, but uniform convergence is not guaranteed. Indeed, since the space $C(\gspace;\mathbb{R}^{N\times n})$ is closed under the supremum norm, a sequence of neural networks will never be able to approximate a discontinuous target uniformly.}

\review{We conclude with a brief note on computational costs. While problems \textbf{PM1-2} were intended as simple toy examples and are thus not particularly relevant from a computational standpoint, problems \textbf{RV1-2} more closely reflect the motivating framework outlined in the Introduction and at the beginning of this Section. For instance, we mention that in \textbf{RV1} the deep learning surrogate offered a substantial speed-up over classical POD, requiring 0.13 seconds per 4000 evaluations in place of 10.2 seconds.}


\section{Conclusions}
\label{sec:conclusions}
In this work, we presented a unified framework for parametric low-rank approximation, covering applications ranging from numerical analysis ---such as the approximation of linear operators--- to probability and statistics ---such as the dimensionality reduction of Hilbert-valued random variables. Additionally, we established foundational results regarding the regularity of parametric algorithms, including parametric SVD and parametric POD, in terms of measurability and continuity, with implications for adaptive and conditional algorithms relying upon universal approximators such as deep neural networks. \review{Numerical experiments were conducted to validate these findings. We showed that, while deep learning models can be used to surrogate parametric low-rank approximation algorithms, the quality of the approximation depends on the underlying problem. Specifically, even though in most cases one can achieve nearly optimal performances according to \textit{average} metrics, a more careful inspection is needed to ensure that the proposed low-rank approximants are uniformly accurate across the whole parameter space. In those cases, a uniform spectral gap condition turns out to be sufficient. Since the latter depends on the chosen low-rank dimension, we speculate that these considerations could eventually lead to novel strategies for the choice of the truncation index $n$.}

Our results, derived under extremely mild assumptions, are nearly as general as possible. This distinguishes our analysis from other fields, which typically focus on small parametric variations and specific regimes where singularities and discontinuities are less likely to occur.
We framed our discussion within the context of separable Hilbert spaces, which makes our analysis applicable to both finite and infinite-dimensional problems. Extending our theory to separable Banach spaces is of interest but not straightforward due to the absence of a canonical form for compact operators ---although they share the notion of singular values, or $s$-numbers \cite{carl1981entropy}. Furthermore, this extension will likely require different analytical tools, as our current construction heavily relies on the certain properties that are specific to Hilbert spaces, such as reflexivity and the existence of a pre-dual. 

Another intriguing research direction is to explore higher regularity properties, such as Lipschitz continuity and differentiability.
From an applicative standpoint, this could bring new insights on adaptive algorithms relying upon universal approximators, allowing, for instance, to discuss the computational complexity of the approximating algorithms, rather than their existence alone, in analogy to the developments in \cite{hornik1991approximation}-\cite{yarotsky2017error}.

\section*{Declarations}

\textbf{Acknowledgements} The present research is part of the activities of project "Dipartimento di Eccellenza 2023-2027", funded by MUR, and of "Gruppo Nazionale per il Calcolo Scientifico" (GNCS), of which the author is member. The author would also like to thank Simone Brivio for inspiring the development of this work, Prof. Paolo Zunino for his precious support, and  Dr. Jacopo Somaglia for the insightful discussions that indirectly contributed to this research.\\\\
\textbf{Funding} The author has received funding under the project \textsf{Cal.Hub.Ria}, Piano Operativo Salute, traiettoria 4, funded by MSAL, and under the project \textsf{Reduced Order Modeling and Deep Learning for the real-time approximation of PDEs (DREAM)}, grant no. FIS00003154, funded by the Italian Science Fund (FIS) and by Ministero dell'Università e della Ricerca (MUR).
\\\\
\textbf{Competing interests} The author has no competing interests to declare that are relevant to the content of this article. 

\bibliography{sn-bibliography}


\begin{thebibliography}{43}
\ifx \bisbn   \undefined \def \bisbn  #1{ISBN #1}\fi
\ifx \binits  \undefined \def \binits#1{#1}\fi
\ifx \bauthor  \undefined \def \bauthor#1{#1}\fi
\ifx \batitle  \undefined \def \batitle#1{#1}\fi
\ifx \bjtitle  \undefined \def \bjtitle#1{#1}\fi
\ifx \bvolume  \undefined \def \bvolume#1{\textbf{#1}}\fi
\ifx \byear  \undefined \def \byear#1{#1}\fi
\ifx \bissue  \undefined \def \bissue#1{#1}\fi
\ifx \bfpage  \undefined \def \bfpage#1{#1}\fi
\ifx \blpage  \undefined \def \blpage #1{#1}\fi
\ifx \burl  \undefined \def \burl#1{\textsf{#1}}\fi
\ifx \doiurl  \undefined \def \doiurl#1{\url{https://doi.org/#1}}\fi
\ifx \betal  \undefined \def \betal{\textit{et al.}}\fi
\ifx \binstitute  \undefined \def \binstitute#1{#1}\fi
\ifx \binstitutionaled  \undefined \def \binstitutionaled#1{#1}\fi
\ifx \bctitle  \undefined \def \bctitle#1{#1}\fi
\ifx \beditor  \undefined \def \beditor#1{#1}\fi
\ifx \bpublisher  \undefined \def \bpublisher#1{#1}\fi
\ifx \bbtitle  \undefined \def \bbtitle#1{#1}\fi
\ifx \bedition  \undefined \def \bedition#1{#1}\fi
\ifx \bseriesno  \undefined \def \bseriesno#1{#1}\fi
\ifx \blocation  \undefined \def \blocation#1{#1}\fi
\ifx \bsertitle  \undefined \def \bsertitle#1{#1}\fi
\ifx \bsnm \undefined \def \bsnm#1{#1}\fi
\ifx \bsuffix \undefined \def \bsuffix#1{#1}\fi
\ifx \bparticle \undefined \def \bparticle#1{#1}\fi
\ifx \barticle \undefined \def \barticle#1{#1}\fi
\bibcommenthead
\ifx \bconfdate \undefined \def \bconfdate #1{#1}\fi
\ifx \botherref \undefined \def \botherref #1{#1}\fi
\ifx \url \undefined \def \url#1{\textsf{#1}}\fi
\ifx \bchapter \undefined \def \bchapter#1{#1}\fi
\ifx \bbook \undefined \def \bbook#1{#1}\fi
\ifx \bcomment \undefined \def \bcomment#1{#1}\fi
\ifx \oauthor \undefined \def \oauthor#1{#1}\fi
\ifx \citeauthoryear \undefined \def \citeauthoryear#1{#1}\fi
\ifx \endbibitem  \undefined \def \endbibitem {}\fi
\ifx \bconflocation  \undefined \def \bconflocation#1{#1}\fi
\ifx \arxivurl  \undefined \def \arxivurl#1{\textsf{#1}}\fi
\csname PreBibitemsHook\endcsname

\bibitem[\protect\citeauthoryear{Eckart and Young}{1936}]{eckart1936approximation}
\begin{barticle}
\bauthor{\bsnm{Eckart}, \binits{C.}},
\bauthor{\bsnm{Young}, \binits{G.}}:
\batitle{The approximation of one matrix by another of lower rank}.
\bjtitle{Psychometrika}
\bvolume{1}(\bissue{3}),
\bfpage{211}--\blpage{218}
(\byear{1936})
\end{barticle}
\endbibitem

\bibitem[\protect\citeauthoryear{Quarteroni et~al.}{2015}]{quarteroni2015reduced}
\begin{bbook}
\bauthor{\bsnm{Quarteroni}, \binits{A.}},
\bauthor{\bsnm{Manzoni}, \binits{A.}},
\bauthor{\bsnm{Negri}, \binits{F.}}:
\bbtitle{Reduced Basis Methods for Partial Differential Equations: an Introduction}
vol. \bseriesno{92}.
\bpublisher{Springer},
\blocation{Cham}
(\byear{2015})
\end{bbook}
\endbibitem

\bibitem[\protect\citeauthoryear{Jolliffe}{1990}]{jolliffe1990principal}
\begin{barticle}
\bauthor{\bsnm{Jolliffe}, \binits{I.T.}}:
\batitle{Principal component analysis: a beginner's guide—i. introduction and application}.
\bjtitle{Weather}
\bvolume{45}(\bissue{10}),
\bfpage{375}--\blpage{382}
(\byear{1990})
\end{barticle}
\endbibitem

\bibitem[\protect\citeauthoryear{Singler}{2014}]{singler2014new}
\begin{barticle}
\bauthor{\bsnm{Singler}, \binits{J.R.}}:
\batitle{New pod error expressions, error bounds, and asymptotic results for reduced order models of parabolic pdes}.
\bjtitle{SIAM Journal on Numerical Analysis}
\bvolume{52}(\bissue{2}),
\bfpage{852}--\blpage{876}
(\byear{2014})
\end{barticle}
\endbibitem

\bibitem[\protect\citeauthoryear{Ramsay and Silverman}{2005}]{ramsay2005principal}
\begin{botherref}
\oauthor{\bsnm{Ramsay}, \binits{J.}},
\oauthor{\bsnm{Silverman}, \binits{B.}}:
Principal components analysis for functional data.
Functional data analysis,
147--172
(2005)
\end{botherref}
\endbibitem

\bibitem[\protect\citeauthoryear{Horv{\'a}th and Kokoszka}{2012}]{horvath2012inference}
\begin{bbook}
\bauthor{\bsnm{Horv{\'a}th}, \binits{L.}},
\bauthor{\bsnm{Kokoszka}, \binits{P.}}:
\bbtitle{Inference for Functional Data with Applications}
vol. \bseriesno{200}.
\bpublisher{Springer},
\blocation{New York}
(\byear{2012})
\end{bbook}
\endbibitem

\bibitem[\protect\citeauthoryear{Kosambi}{2016}]{kosambi2016statistics}
\begin{botherref}
\oauthor{\bsnm{Kosambi}, \binits{D.}}:
Statistics in function space.
DD Kosambi: Selected Works in Mathematics and Statistics,
115--123
(2016)
\end{botherref}
\endbibitem

\bibitem[\protect\citeauthoryear{Karhunen}{1947}]{karhunen1947under}
\begin{botherref}
\oauthor{\bsnm{Karhunen}, \binits{K.}}:
Under lineare methoden in der wahr scheinlichkeitsrechnung.
Annales Academiae Scientiarun Fennicae Series A1: Mathematia Physica
\textbf{47}
(1947)
\end{botherref}
\endbibitem

\bibitem[\protect\citeauthoryear{Musharbash et~al.}{2015}]{musharbash2015error}
\begin{barticle}
\bauthor{\bsnm{Musharbash}, \binits{E.}},
\bauthor{\bsnm{Nobile}, \binits{F.}},
\bauthor{\bsnm{Zhou}, \binits{T.}}:
\batitle{Error analysis of the dynamically orthogonal approximation of time dependent random pdes}.
\bjtitle{SIAM Journal on Scientific Computing}
\bvolume{37}(\bissue{2}),
\bfpage{776}--\blpage{810}
(\byear{2015})
\end{barticle}
\endbibitem

\bibitem[\protect\citeauthoryear{Gupta and Barbu}{2018}]{gupta2018parameterized}
\begin{barticle}
\bauthor{\bsnm{Gupta}, \binits{A.}},
\bauthor{\bsnm{Barbu}, \binits{A.}}:
\batitle{Parameterized principal component analysis}.
\bjtitle{Pattern Recognition}
\bvolume{78},
\bfpage{215}--\blpage{227}
(\byear{2018})
\end{barticle}
\endbibitem

\bibitem[\protect\citeauthoryear{Bunse-Gerstner et~al.}{1991}]{bunse1991numerical}
\begin{barticle}
\bauthor{\bsnm{Bunse-Gerstner}, \binits{A.}},
\bauthor{\bsnm{Byers}, \binits{R.}},
\bauthor{\bsnm{Mehrmann}, \binits{V.}},
\bauthor{\bsnm{Nichols}, \binits{N.K.}}:
\batitle{Numerical computation of an analytic singular value decomposition of a matrix valued function}.
\bjtitle{Numerische Mathematik}
\bvolume{60},
\bfpage{1}--\blpage{39}
(\byear{1991})
\end{barticle}
\endbibitem

\bibitem[\protect\citeauthoryear{Berman and Peherstorfer}{2024}]{berman2024colora}
\begin{botherref}
\oauthor{\bsnm{Berman}, \binits{J.}},
\oauthor{\bsnm{Peherstorfer}, \binits{B.}}:
Colora: Continuous low-rank adaptation for reduced implicit neural modeling of parameterized partial differential equations.
arXiv:2402.14646
(2024)
\end{botherref}
\endbibitem

\bibitem[\protect\citeauthoryear{Cardot}{2007}]{cardot2007conditional}
\begin{barticle}
\bauthor{\bsnm{Cardot}, \binits{H.}}:
\batitle{Conditional functional principal components analysis}.
\bjtitle{Scandinavian journal of statistics}
\bvolume{34}(\bissue{2}),
\bfpage{317}--\blpage{335}
(\byear{2007})
\end{barticle}
\endbibitem

\bibitem[\protect\citeauthoryear{Amsallem and Farhat}{2008}]{amsallem2008interpolation}
\begin{barticle}
\bauthor{\bsnm{Amsallem}, \binits{D.}},
\bauthor{\bsnm{Farhat}, \binits{C.}}:
\batitle{Interpolation method for adapting reduced-order models and application to aeroelasticity}.
\bjtitle{AIAA journal}
\bvolume{46}(\bissue{7}),
\bfpage{1803}--\blpage{1813}
(\byear{2008})
\end{barticle}
\endbibitem

\bibitem[\protect\citeauthoryear{Amsallem et~al.}{2012}]{amsallem2012nonlinear}
\begin{barticle}
\bauthor{\bsnm{Amsallem}, \binits{D.}},
\bauthor{\bsnm{Zahr}, \binits{M.J.}},
\bauthor{\bsnm{Farhat}, \binits{C.}}:
\batitle{Nonlinear model order reduction based on local reduced-order bases}.
\bjtitle{International Journal for Numerical Methods in Engineering}
\bvolume{92}(\bissue{10}),
\bfpage{891}--\blpage{916}
(\byear{2012})
\end{barticle}
\endbibitem

\bibitem[\protect\citeauthoryear{Boncoraglio and Farhat}{2021}]{boncoraglio2021active}
\begin{barticle}
\bauthor{\bsnm{Boncoraglio}, \binits{G.}},
\bauthor{\bsnm{Farhat}, \binits{C.}}:
\batitle{Active manifold and model-order reduction to accelerate multidisciplinary analysis and optimization}.
\bjtitle{AIAA Journal}
\bvolume{59}(\bissue{11}),
\bfpage{4739}--\blpage{4753}
(\byear{2021})
\end{barticle}
\endbibitem

\bibitem[\protect\citeauthoryear{Boncoraglio et~al.}{2021}]{boncoraglio2021model}
\begin{barticle}
\bauthor{\bsnm{Boncoraglio}, \binits{G.}},
\bauthor{\bsnm{Farhat}, \binits{C.}},
\bauthor{\bsnm{Bou-Mosleh}, \binits{C.}}:
\batitle{Model reduction framework with a new take on active subspaces for optimization problems with linearized fluid-structure interaction constraints}.
\bjtitle{International Journal for Numerical Methods in Engineering}
\bvolume{122}(\bissue{19}),
\bfpage{5450}--\blpage{5481}
(\byear{2021})
\end{barticle}
\endbibitem

\bibitem[\protect\citeauthoryear{Franco et~al.}{2024}]{franco2024deep}
\begin{botherref}
\oauthor{\bsnm{Franco}, \binits{N.R.}},
\oauthor{\bsnm{Manzoni}, \binits{A.}},
\oauthor{\bsnm{Zunino}, \binits{P.}},
\oauthor{\bsnm{Hesthaven}, \binits{J.S.}}:
Deep orthogonal decomposition: a continuously adaptive data-driven approach to model order reduction.
arXiv:2404.18841
(2024)
\end{botherref}
\endbibitem

\bibitem[\protect\citeauthoryear{Koch and Lubich}{2007}]{koch2007dynamical}
\begin{barticle}
\bauthor{\bsnm{Koch}, \binits{O.}},
\bauthor{\bsnm{Lubich}, \binits{C.}}:
\batitle{Dynamical low-rank approximation}.
\bjtitle{SIAM Journal on Matrix Analysis and Applications}
\bvolume{29}(\bissue{2}),
\bfpage{434}--\blpage{454}
(\byear{2007})
\end{barticle}
\endbibitem

\bibitem[\protect\citeauthoryear{Sapsis and Lermusiaux}{2009}]{sapsis2009dynamically}
\begin{barticle}
\bauthor{\bsnm{Sapsis}, \binits{T.P.}},
\bauthor{\bsnm{Lermusiaux}, \binits{P.F.}}:
\batitle{Dynamically orthogonal field equations for continuous stochastic dynamical systems}.
\bjtitle{Physica D: Nonlinear Phenomena}
\bvolume{238}(\bissue{23-24}),
\bfpage{2347}--\blpage{2360}
(\byear{2009})
\end{barticle}
\endbibitem

\bibitem[\protect\citeauthoryear{Peherstorfer}{2020}]{peherstorfer2020model}
\begin{barticle}
\bauthor{\bsnm{Peherstorfer}, \binits{B.}}:
\batitle{Model reduction for transport-dominated problems via online adaptive bases and adaptive sampling}.
\bjtitle{SIAM Journal on Scientific Computing}
\bvolume{42}(\bissue{5}),
\bfpage{2803}--\blpage{2836}
(\byear{2020})
\end{barticle}
\endbibitem

\bibitem[\protect\citeauthoryear{Kato}{2013}]{kato2013perturbation}
\begin{bbook}
\bauthor{\bsnm{Kato}, \binits{T.}}:
\bbtitle{Perturbation Theory for Linear Operators}
vol. \bseriesno{132}.
\bpublisher{Springer},
\blocation{Berlin, Heidelberg}
(\byear{2013})
\end{bbook}
\endbibitem

\bibitem[\protect\citeauthoryear{Turner}{1969}]{turner1969perturbation}
\begin{barticle}
\bauthor{\bsnm{Turner}, \binits{R.}}:
\batitle{Perturbation with two parameters}.
\bjtitle{Linear Algebra and its Applications}
\bvolume{2}(\bissue{1}),
\bfpage{1}--\blpage{11}
(\byear{1969})
\end{barticle}
\endbibitem

\bibitem[\protect\citeauthoryear{Aubin and Frankowska}{2009}]{aubin2009set}
\begin{bbook}
\bauthor{\bsnm{Aubin}, \binits{J.-P.}},
\bauthor{\bsnm{Frankowska}, \binits{H.}}:
\bbtitle{Set-valued Analysis}.
\bpublisher{Springer},
\blocation{Birkhäuser Boston, MA}
(\byear{2009})
\end{bbook}
\endbibitem

\bibitem[\protect\citeauthoryear{Yosida}{2012}]{yosida2012functional}
\begin{bbook}
\bauthor{\bsnm{Yosida}, \binits{K.}}:
\bbtitle{Functional Analysis}
vol. \bseriesno{123}.
\bpublisher{Springer},
\blocation{Berlin, Heidelberg}
(\byear{2012})
\end{bbook}
\endbibitem

\bibitem[\protect\citeauthoryear{Conway}{2019}]{conway2019course}
\begin{bbook}
\bauthor{\bsnm{Conway}, \binits{J.B.}}:
\bbtitle{A Course in Functional Analysis}
vol. \bseriesno{96}.
\bpublisher{Springer},
\blocation{New York}
(\byear{2019})
\end{bbook}
\endbibitem

\bibitem[\protect\citeauthoryear{Reed and Simon}{1980}]{reed1980methods}
\begin{bbook}
\bauthor{\bsnm{Reed}, \binits{M.}},
\bauthor{\bsnm{Simon}, \binits{B.}}:
\bbtitle{Methods of Modern Mathematical Physics. Vol. 1. Functional Analysis}.
\bpublisher{Academic San Diego},
\blocation{San Diego}
(\byear{1980})
\end{bbook}
\endbibitem

\bibitem[\protect\citeauthoryear{Gohberg and Kreĭn}{1978}]{gohberg1978introduction}
\begin{bbook}
\bauthor{\bsnm{Gohberg}, \binits{I.}},
\bauthor{\bsnm{Kreĭn}, \binits{M.G.}}:
\bbtitle{Introduction to the Theory of Linear Nonselfadjoint Operators}
vol. \bseriesno{18}.
\bpublisher{American Mathematical Soc.},
\blocation{Providence, Rhode Island}
(\byear{1978})
\end{bbook}
\endbibitem

\bibitem[\protect\citeauthoryear{Karhunen}{1947}]{karhunen1947lineare}
\begin{barticle}
\bauthor{\bsnm{Karhunen}, \binits{K.}}:
\batitle{{\"U}ber lineare methoden in der wahrscheinlichkeitsrechnung}.
\bjtitle{Ann Acad Sci Fennicae}
\bvolume{37},
\bfpage{1}
(\byear{1947})
\end{barticle}
\endbibitem

\bibitem[\protect\citeauthoryear{Loève}{1978}]{loeve1978probability}
\begin{bbook}
\bauthor{\bsnm{Loève}, \binits{M.}}:
\bbtitle{Probability Theory, Volume II},
\bedition{4}th edn.
\bsertitle{Graduate Texts in Mathematics},
vol. \bseriesno{46}.
\bpublisher{Springer}, \blocation{???}
(\byear{1978})
\end{bbook}
\endbibitem

\bibitem[\protect\citeauthoryear{Payen}{1967}]{payen1967fonctions}
\begin{bchapter}
\bauthor{\bsnm{Payen}, \binits{R.}}:
\bctitle{Fonctions al{\'e}atoires du second ordre {\`a} valeurs dans un espace de hilbert}.
In: \bbtitle{Annales de L'institut Henri Poincar{\'e}. Section B. Calcul des Probabilit{\'e}s et Statistiques},
vol. \bseriesno{3},
pp. \bfpage{323}--\blpage{396}
(\byear{1967})
\end{bchapter}
\endbibitem

\bibitem[\protect\citeauthoryear{Aliprantis and Border}{2006}]{aliprantis2006infinite}
\begin{bbook}
\bauthor{\bsnm{Aliprantis}, \binits{C.D.}},
\bauthor{\bsnm{Border}, \binits{K.C.}}:
\bbtitle{Infinite Dimensional Analysis: a Hitchhiker’s Guide}.
\bpublisher{Springer}, \blocation{???}
(\byear{2006})
\end{bbook}
\endbibitem

\bibitem[\protect\citeauthoryear{Rockafellar and Wets}{1998}]{rockafellar1998variational}
\begin{bbook}
\bauthor{\bsnm{Rockafellar}, \binits{R.T.}},
\bauthor{\bsnm{Wets}, \binits{R.J.}}:
\bbtitle{Variational Analysis}.
\bpublisher{Springer}, \blocation{???}
(\byear{1998})
\end{bbook}
\endbibitem

\bibitem[\protect\citeauthoryear{Husem{\"o}ller and Husem{\"o}ller}{1966}]{husemoller1966fibre}
\begin{bbook}
\bauthor{\bsnm{Husem{\"o}ller}, \binits{D.}},
\bauthor{\bsnm{Husem{\"o}ller}, \binits{D.}}:
\bbtitle{Fibre Bundles}
vol. \bseriesno{5}.
\bpublisher{Springer}, \blocation{???}
(\byear{1966})
\end{bbook}
\endbibitem

\bibitem[\protect\citeauthoryear{Brivio et~al.}{2024}]{brivio2024handling}
\begin{botherref}
\oauthor{\bsnm{Brivio}, \binits{S.}},
\oauthor{\bsnm{Fresca}, \binits{S.}},
\oauthor{\bsnm{Manzoni}, \binits{A.}}:
Handling geometrical variability in nonlinear reduced order modeling through continuous geometry-aware dl-roms.
arXiv preprint arXiv:2411.05486
(2024)
\end{botherref}
\endbibitem

\bibitem[\protect\citeauthoryear{Hornik}{1991}]{hornik1991approximation}
\begin{barticle}
\bauthor{\bsnm{Hornik}, \binits{K.}}:
\batitle{Approximation capabilities of multilayer feedforward networks}.
\bjtitle{Neural networks}
\bvolume{4}(\bissue{2}),
\bfpage{251}--\blpage{257}
(\byear{1991})
\end{barticle}
\endbibitem

\bibitem[\protect\citeauthoryear{Rodrigues}{1840}]{rodrigues1840lois}
\begin{barticle}
\bauthor{\bsnm{Rodrigues}, \binits{O.}}:
\batitle{Des lois g{\'e}om{\'e}triques qui r{\'e}gissent les d{\'e}placements d'un syst{\`e}me solide dans l'espace, et de la variation des coordonn{\'e}es provenant de ces d{\'e}placements consid{\'e}r{\'e}s ind{\'e}pendamment des causes qui peuvent les produire}.
\bjtitle{Journal de math{\'e}matiques pures et appliqu{\'e}es}
\bvolume{5},
\bfpage{380}--\blpage{440}
(\byear{1840})
\end{barticle}
\endbibitem

\bibitem[\protect\citeauthoryear{Paszke et~al.}{2019}]{paszke2019pytorch}
\begin{botherref}
\oauthor{\bsnm{Paszke}, \binits{A.}},
\oauthor{\bsnm{Gross}, \binits{S.}},
\oauthor{\bsnm{Massa}, \binits{F.}},
\oauthor{\bsnm{Lerer}, \binits{A.}},
\oauthor{\bsnm{Bradbury}, \binits{J.}},
\oauthor{\bsnm{Chanan}, \binits{G.}},
\oauthor{\bsnm{Killeen}, \binits{T.}},
\oauthor{\bsnm{Lin}, \binits{Z.}},
\oauthor{\bsnm{Gimelshein}, \binits{N.}},
\oauthor{\bsnm{Antiga}, \binits{L.}}, et al.:
Pytorch: An imperative style, high-performance deep learning library.
Advances in neural information processing systems
\textbf{32}
(2019)
\end{botherref}
\endbibitem

\bibitem[\protect\citeauthoryear{Franco}{2024}]{franco2024dlroms}
\begin{botherref}
\oauthor{\bsnm{Franco}, \binits{N.R.}}:
NicolaRFranco/dlroms: First release.
Zenodo
(2024).
\doiurl{10.5281/zenodo.13254758}
\end{botherref}
\endbibitem

\bibitem[\protect\citeauthoryear{Carl}{1981}]{carl1981entropy}
\begin{barticle}
\bauthor{\bsnm{Carl}, \binits{B.}}:
\batitle{Entropy numbers, s-numbers, and eigenvalue problems}.
\bjtitle{Journal of Functional Analysis}
\bvolume{41}(\bissue{3}),
\bfpage{290}--\blpage{306}
(\byear{1981})
\end{barticle}
\endbibitem

\bibitem[\protect\citeauthoryear{Yarotsky}{2017}]{yarotsky2017error}
\begin{barticle}
\bauthor{\bsnm{Yarotsky}, \binits{D.}}:
\batitle{Error bounds for approximations with deep relu networks}.
\bjtitle{Neural Networks}
\bvolume{94},
\bfpage{103}--\blpage{114}
(\byear{2017})
\end{barticle}
\endbibitem

\bibitem[\protect\citeauthoryear{Dirr and Ende}{2020}]{dirr2019neumann}
\begin{barticle}
\bauthor{\bsnm{Dirr}, \binits{G.}},
\bauthor{\bsnm{Ende}, \binits{F.v.}}:
\batitle{Von neumann type of trace inequalities for schatten-class operators}.
\bjtitle{Journal of Operator Theory}
\bvolume{84}(\bissue{2}),
\bfpage{323}--\blpage{338}
(\byear{2020})
\end{barticle}
\endbibitem

\bibitem[\protect\citeauthoryear{Horn and Johnson}{2012}]{horn2012matrix}
\begin{bbook}
\bauthor{\bsnm{Horn}, \binits{R.A.}},
\bauthor{\bsnm{Johnson}, \binits{C.R.}}:
\bbtitle{Matrix Analysis}.
\bpublisher{Cambridge university press},
\blocation{Cambridge, Massachusetts}
(\byear{2012})
\end{bbook}
\endbibitem

\end{thebibliography}

\begin{appendices}
\renewcommand{\thelemma}{\Alph{section}.\arabic{lemma}}
\setcounter{lemma}{0}
\renewcommand{\thecorollary}{\Alph{section}.\arabic{corollary}}
\setcounter{corollary}{0}

\newcommand{\customParagraph}[1]{\;\newline\textit{\textbf{#1}}}

\section{Auxiliary results}
\label{appendix:auxiliary}

\begin{lemma}[Truncation Lemma]
    \label{lemma:truncation}
    Let $n\in\mathbb{N}_+$. Let $\{\lambda_i\}_{i=1}^{+\infty}\in\ell^1(\mathbb{N}_+)$ be such that $$\lambda_1\ge\dots\ge\lambda_{n-1}\ge\lambda_n = \dots = \lambda_{n+r} > \lambda_{n+r+1},$$
    for some $r\ge0$. Let $\mathscr{A}=\{\{a_i\}_{i=1}^{\infty}\;:\;0\le a_i\le1,\;\sum_{i=1}^{+\infty}a_i=n\}\subset\ell^{\infty}(\mathbb{N}_+)$, and consider the linear functional
    $l:\ell^{\infty}(\mathbb{N}_+)\to\mathbb{R}$,
    $$l(a)=\sum_{i=1}^{+\infty}\lambda_ia_i.$$
    All maximizers $a^*\in \mathscr{A}$ satisfy $\sum_{i=1}^{n+r}a_i^*=n$ and $l(a^*)=\sum_{i=1}^{n}\lambda_i.$ In particular, if $r=0$, then $l$ admits a unique maximizer within $\mathscr{A}$, obtained by setting $a_i=1$ for all $i=1,\dots,n$ and $a_i=0$ for all $i>n.$
\end{lemma}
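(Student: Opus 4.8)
The plan is to sidestep any compactness or existence argument and instead exhibit an explicit competitor, verify it is optimal, and then read off the structure of \emph{arbitrary} maximizers from the equality case of the estimate. Concretely, let $\bar a\in\mathscr{A}$ be the sequence with $\bar a_i=1$ for $i\le n$ and $\bar a_i=0$ for $i>n$, so that $l(\bar a)=\sum_{i=1}^n\lambda_i$. Since $\{\lambda_i\}\in\ell^1$ and $0\le a_i\le1$, every series $\sum_i\lambda_i a_i$ converges absolutely, which makes all the splittings below legitimate; the only structural inputs are the ordering $\lambda_1\ge\cdots\ge\lambda_n$, the bound $\lambda_i\le\lambda_n$ for all $i>n$, and the fact that $\lambda_i=\lambda_n$ for $n<i\le n+r$ while $\lambda_i<\lambda_n$ for $i>n+r$.

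First I would record the conservation identity coming from the constraint $\sum_i a_i=n$: writing $m:=\sum_{i=1}^n(1-a_i)$ one has $m=\sum_{i=n+1}^\infty a_i\ge0$. Using it, for any $a\in\mathscr{A}$,
\begin{equation*}
\sum_{i=1}^n\lambda_i-l(a)=\sum_{i=1}^n\lambda_i(1-a_i)-\sum_{i=n+1}^\infty\lambda_i a_i=\sum_{i=1}^n(\lambda_i-\lambda_n)(1-a_i)+\sum_{i=n+1}^\infty(\lambda_n-\lambda_i)a_i.
\end{equation*}
Both sums on the right are nonnegative termwise, since $\lambda_i-\lambda_n\ge0$ and $1-a_i\ge0$ for $i\le n$, while $\lambda_n-\lambda_i\ge0$ and $a_i\ge0$ for $i>n$. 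Hence $l(a)\le\sum_{i=1}^n\lambda_i$ for every $a\in\mathscr{A}$, and $\bar a$ attains this value, so the supremum is a maximum equal to $\sum_{i=1}^n\lambda_i$; in particular $l(a^{*})=\sum_{i=1}^n\lambda_i$ for any maximizer $a^{*}$.

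Next I would analyze when equality holds. If $a^{*}$ is a maximizer, both nonnegative sums above must vanish, forcing $(\lambda_i-\lambda_n)(1-a_i^{*})=0$ for $i\le n$ and $(\lambda_n-\lambda_i)a_i^{*}=0$ for $i>n$. From the second family, $a_i^{*}=0$ whenever $\lambda_i<\lambda_n$, i.e.\ whenever $i>n+r$; combined with $\sum_i a_i^{*}=n$ this yields $\sum_{i=1}^{n+r}a_i^{*}=n$, which is the first assertion. When $r=0$ the constant block degenerates, so $a_i^{*}=0$ for all $i>n$, and then $\sum_{i=1}^n a_i^{*}=n$ with each $a_i^{*}\in[0,1]$ forces $a_i^{*}=1$ for $i=1,\dots,n$; thus the maximizer is unique and equals $\bar a$.

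The step that needs the most care is this last equality analysis, specifically resisting the temptation, when $r=0$, to deduce $a_i^{*}=1$ for $i\le n$ directly from $(\lambda_i-\lambda_n)(1-a_i^{*})=0$: that inference fails precisely when the constant block reaches below index $n$, i.e.\ $\lambda_{n-1}=\lambda_n$. The honest route uses the termwise conditions only for indices $i>n$ (to kill the tail) and then leans on the box constraint $a_i^{*}\le1$ together with total mass $n$. The remaining bookkeeping --- absolute convergence justifying the rearrangement, and the harmless bound $m\le n$ --- is routine; note in particular that no sign condition on the $\lambda_i$ is ever used, only their ordering.
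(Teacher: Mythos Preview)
Your proof is correct and takes a genuinely different route from the paper's. The paper first invokes weak$^*$-compactness of $\mathscr{A}$ in $\ell^\infty=(\ell^1)'$ to guarantee a maximizer exists, and then runs a mass-exchange argument: if $\sum_{i\le n+r}a_i<n$, one finds indices $i_1\le n+r$ with $a_{i_1}<1$ and $i_2>n+r$ with $a_{i_2}>0$, shifts $\varepsilon=\min\{a_{i_2},1-a_{i_1}\}$ of weight from $i_2$ to $i_1$, and checks that $l$ strictly increases; a second pass of the same device on the first $n-1$ coordinates yields the value $\sum_{i=1}^n\lambda_i$. Your approach replaces all of this with the single identity
\[
\sum_{i=1}^n\lambda_i-l(a)\;=\;\sum_{i=1}^n(\lambda_i-\lambda_n)(1-a_i)\;+\;\sum_{i>n}(\lambda_n-\lambda_i)\,a_i\;\ge 0,
\]
where the constraint $\sum_i a_i=n$ is exactly what makes the $\lambda_n$-cross terms cancel. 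This is cleaner: it delivers the sharp upper bound and the full equality analysis in one line, bypasses compactness and existence questions by exhibiting $\bar a$ directly, and makes transparent that only the ordering of the $\lambda_i$ (not their sign) is used. The paper's exchange argument, by contrast, is closer to the rearrangement intuition and would adapt more readily to settings where one only has local improvement moves available. Your remark that in the $r=0$ case one must \emph{not} read off $a_i^*=1$ for $i\le n$ from $(\lambda_i-\lambda_n)(1-a_i^*)=0$, but instead kill the tail first and then use the box constraint plus total mass, is exactly the subtle point; the paper's two-pass exchange implicitly handles the same issue.
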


\begin{proof}
     Notice that $A$ is both closed and convex. Consequently, since $\ell^{\infty}(\mathbb{N}_+)$ is the topological dual of $\ell^{1}(\mathbb{N}_+)$, classical arguments of weak$^*$-compactness show that $l$ admits one or more maxima over $A$. Now, let $a=\{a_i\}_{i=1}^{+\infty}\in A$ be such that $\sum_{i=1}^{n+r}a_i<n$. Then, there exist two indexes, $i_1\in\{1,\dots,n+r\}$ and $i_2>n+r$ such that
    $$0\le a_{i_1}<1\quad\text{and}\quad 0< a_{i_2}\le1.$$
    Let $\varepsilon=\min\{a_{i_2}, 1-a_{i_1}\}>0$ and define the sequence
    $$\tilde{a}_i:=a_i+\varepsilon\delta_{i,i_1}-\varepsilon\delta_{i,i_2}$$
    with $\delta_{i,j}$ being the usual Kronecker delta. It is straightforward to see that $\tilde{a}=\{\tilde{a}_i\}_{i=1}^{+\infty}\in A$. We have
    $$l(\tilde{a})=l(a)+(\lambda_{i_1}-\lambda_{i_2})\varepsilon>l(a),$$
    since $\lambda_{i_1}\ge\lambda_{n+r}>\lambda_{n+r+1}\ge\lambda_{i_2}$. Therefore, $a$ is not a maximizer of $l$. This shows that any $a\in A$ maximizing $l$ must satisfy $\sum_{i=1}^{n+r}a_i=n$, as claimed. Finally, we notice $\sum_{i=1}^{n+r}a_i=n\implies a_i=0$ for all $i>n+r$. Thus, for any $a^*\in\mathscr{A}$ maximizing $l$ we have
    $$l(a^*)=\sum_{i=1}^{+\infty}\lambda_ia^*_i = \sum_{i=1}^{n-1}\lambda_i a^*_i + \lambda_n\sum_{i=n}^{n+r}a^*_i,$$
    since $\lambda_j=\lambda_n$ for all $n\le j\le n+r.$
    By leveraging the fact that $\lambda_i\ge\lambda_n$ for $i=1,\dots,n-1$, and by repeating the same argument as before, it is straightforward to see that, without loss of generality we may focus on those maximizers for which $\sum_{i=1}^{n-1}a_i^*=n-1.$ Then, these satisfy $a_i^*=1$ for all $i=1,\dots,n-1$, and, thus $\sum_{i=n}^{n+r}=n-\sum_{i=1}^{n-1}a_i^*=1.$ It follows that $l(a^*)=\sum_{i=1}^{n}\lambda_i.$
\end{proof}

\begin{corollary}
    \label{corollary:Auj}
    Let $(H,\|\cdot\|)$ be a separable Hilbert space. Let $A\in\hilbs(H)$ be given in canonical form as
    $A=\sum_{i=1}^{+\infty}\sigma_i(A)\scl{\cdot}{u_i}v_i.$
    Fix any $n\in\mathbb{N}_+$. For all orthonormal sets $\{\tilde{u}_1,\dots,\tilde{u}_n\}$ one has
    \begin{equation}
    \label{eq:Auj}\sum_{i=1}^{n}\|A\tilde{u}_i\|^2\le\sum_{i=1}^{n}\sigma_i(A)^2.\end{equation}
    Additionally, if $\sigma_n(A)>\sigma_{n+1}(A)$, then the equality can be realized iff $\textnormal{span}(\{\tilde{u}_i\}_{i=1}^{n})=\textnormal{span}(\{u_i\}_{i=1}^{n})$.
\end{corollary}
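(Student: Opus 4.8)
The plan is to exploit the canonical form of $A$ together with the Truncation Lemma (Lemma \ref{lemma:truncation}) applied to a cleverly chosen sequence. First, I would write $A=\sum_{i=1}^{+\infty}\sigma_i(A)\scl{\cdot}{u_i}v_i$ and compute, for an arbitrary orthonormal set $\{\tilde u_1,\dots,\tilde u_n\}$,
$$\sum_{j=1}^{n}\|A\tilde u_j\|^2=\sum_{j=1}^{n}\sum_{i=1}^{+\infty}\sigma_i(A)^2|\scl{\tilde u_j}{u_i}|^2=\sum_{i=1}^{+\infty}\sigma_i(A)^2 a_i,\qquad a_i:=\sum_{j=1}^{n}|\scl{\tilde u_j}{u_i}|^2,$$
using orthonormality of $\{v_i\}$. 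The key observation is that the coefficients $a_i$ satisfy $0\le a_i\le 1$ (since $a_i=\|Pu_i\|^2$ where $P$ is the orthogonal projection onto $\mathrm{span}\{\tilde u_j\}$, a rank-$n$ projection, so $\|Pu_i\|\le\|u_i\|=1$) and $\sum_{i=1}^{+\infty}a_i=\sum_{j=1}^{n}\|P_U\tilde u_j\|^2$ where $P_U$ is the projection onto $\overline{\mathrm{span}}\{u_i\}$; since each $\tilde u_j$ need not lie in that span, in general $\sum_i a_i\le n$. This is slightly weaker than the hypothesis $\sum_i a_i=n$ in Lemma \ref{lemma:truncation}, so I would first note that replacing $a_i$ by a larger sequence $\hat a_i\ge a_i$ with $0\le\hat a_i\le1$ and $\sum_i\hat a_i=n$ only increases $\sum_i\sigma_i(A)^2 a_i$ (since $\sigma_i(A)^2\ge0$), reducing to the case $\sum_i a_i=n$ covered by the Lemma, which gives $\sum_{i}\sigma_i(A)^2 a_i\le\sum_{i=1}^{n}\sigma_i(A)^2$. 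This proves \eqref{eq:Auj}.

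For the equality characterization under $\sigma_n(A)>\sigma_{n+1}(A)$, I would apply the full strength of Lemma \ref{lemma:truncation} with $\lambda_i=\sigma_i(A)^2$ (whose $n$-th gap is strict by hypothesis, i.e. $r=0$ in the Lemma's notation). Equality in \eqref{eq:Auj} forces, after the reduction above, that $(a_i)$ itself is a maximizer with $\sum_i a_i=n$; by the uniqueness clause of the Lemma with $r=0$ this means $a_i=1$ for $i\le n$ and $a_i=0$ for $i>n$. From $a_i=\|Pu_i\|^2=1$ for $i\le n$ we get $u_i\in\mathrm{range}(P)=\mathrm{span}\{\tilde u_j\}$ for $i=1,\dots,n$; since both spans have dimension $n$, this yields $\mathrm{span}\{u_i\}_{i=1}^n=\mathrm{span}\{\tilde u_j\}_{j=1}^n$. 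Conversely, if the spans coincide then $P=P_n$, the projection onto $\mathrm{span}\{u_i\}_{i=1}^n$, and a direct computation gives $\sum_j\|A\tilde u_j\|^2=\mathrm{Tr}(P_nA^*AP_n)=\sum_{i=1}^n\sigma_i(A)^2$.

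The main obstacle I anticipate is the careful handling of the gap between $\sum_i a_i\le n$ and the Lemma's requirement $\sum_i a_i=n$: one must ensure the "completion" step $\hat a_i\ge a_i$ is compatible with the equality analysis, i.e. that in the equality case no mass is lost to the orthogonal complement of $\overline{\mathrm{span}}\{u_i\}$. This is automatic because $\sum_i\sigma_i(A)^2\hat a_i\ge\sum_i\sigma_i(A)^2 a_i$ with equality only if the added mass sits on indices where $\sigma_i(A)=0$; but then we can reassign and still conclude $a_i=1$ for $i\le n$, so the argument goes through. A minor technical point is justifying the interchange of summation order in the double sum, which is legitimate since all terms are nonnegative (Tonelli).
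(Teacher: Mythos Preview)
Your proposal is correct and follows essentially the same route as the paper: rewrite $\sum_{j=1}^{n}\|A\tilde u_j\|^2=\sum_{i}\sigma_i(A)^2 a_i$ with $a_i=\|\tilde P u_i\|^2$ for $\tilde P$ the orthogonal projection onto $\mathrm{span}\{\tilde u_j\}$, then invoke Lemma~\ref{lemma:truncation}. The paper asserts directly that $\sum_i a_i=\hsnorm{\tilde P}^2=n$ (tacitly treating $\{u_i\}$ as a complete orthonormal basis), whereas you only claim $\sum_i a_i\le n$ and patch the gap by a completion $\hat a_i\ge a_i$; your extra care is harmless and arguably more rigorous, but otherwise the two arguments are identical.
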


\begin{proof}
    Let $\tilde U:=\textnormal{span}(\{\tilde{u}_i\}_{i=1}^{n}).$ Let $\tilde P$ be the orthogonal projection from $H$ onto $\tilde U$. Clearly, $\rank(\tilde P)=n$ and, in  particular, $\hsnorm{\tilde P}^2 = n.$ We have
    \begin{align*}
    \sum_{i=1}^{n}\|A\tilde{u}_i\|^2=\sum_{i=1}^{n}\sum_{j=1}^{+\infty}\sigma_j(A)^2|\scl{\tilde u_i}{u_j}|^2=\sum_{j=1}^{\infty}\sigma_j(A)^2\sum_{i=1}^{n}|\scl{\tilde u_i}{u_j}|^2=\sum_{j=1}^{\infty}\sigma_j(A)^2\|\tilde P u_j\|^2.\end{align*}
    Now, let $a_j:=\|\tilde P u_j\|^2.$ By definition, $0\le a_j\le 1$ and $\sum_{j=1}^{+\infty}a_j=\hsnorm{\tilde P}^2=n.$ Then, Eq. \eqref{eq:Auj} follows directly from Lemma \ref{lemma:truncation}. Similarly, if $\sigma_n(A)>\sigma_{n+1}(A)$, Lemma \ref{lemma:truncation} ensures that the equality can be achieved if and only if $\|\tilde P u_j\|=1$ for all $j=1,\dots,n$ and $\|\tilde P u_j\|=0$ for all $j>n.$ Equivalently, $P$ must be the projection onto 
    $\textnormal{span}(\{u_i\}_{i=1}^{n})$. The conclusion follows.
\end{proof}

\begin{lemma}
    \label{lemma:covariance}
     Let $(H,\|\cdot\|)$ be a separable Hilbert space, endowed with a suitable probability measure $\mathbb{P}.$ Let $Z,Z'$ be two $H$-valued random variables such that $\mathbb{E}\|Z\|^2+\mathbb{E}\|Z'\|^2<+\infty$. Let $B,B'\in\trace$ be the (uncentered) covariance operators, $B:u\mapsto \mathbb{E}[\scl{u}{Z}Z]$ and $B':u\mapsto \mathbb{E}[\scl{u}{Z'}Z']$, respectively. Then,
     $$\|B-B'\|_1\le\mathbb{E}^{1/2}\|Z-Z'\|^2\left(\mathbb{E}^{1/2}\|Z\|^2+\mathbb{E}^{1/2}\|Z'\|^2\right).$$     
\end{lemma}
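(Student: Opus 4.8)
The plan is to decompose $B-B'$ as a sum of two rank-one-valued Bochner integrals and estimate each factor by the subadditivity of $\|\cdot\|_1$ together with the Cauchy--Schwarz inequality. For $a,b\in H$ write $a\otimes b\in\trace$ for the rank-one operator $u\mapsto\scl{u}{a}b$; its only nonzero singular value is $\|a\|\,\|b\|$, so $\|a\otimes b\|_1=\|a\|\,\|b\|$. With this notation $B=\mathbb{E}[Z\otimes Z]$ and $B'=\mathbb{E}[Z'\otimes Z']$, where the integrals are taken in the separable Banach space $(\trace,\|\cdot\|_1)$: indeed $\omega\mapsto Z\otimes Z$ is strongly measurable and $\mathbb{E}\|Z\otimes Z\|_1=\mathbb{E}\|Z\|^2<+\infty$, so the integrand is Bochner integrable, and since the evaluation map $A\mapsto Au$ is bounded from $\trace$ to $H$ it commutes with the integral, which identifies $\mathbb{E}[Z\otimes Z]$ with the slotwise-defined operator $u\mapsto\mathbb{E}[\scl{u}{Z}Z]$ of the statement.

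The key step is the elementary identity
$$\scl{u}{Z}Z-\scl{u}{Z'}Z'=\scl{u}{Z-Z'}Z+\scl{u}{Z'}(Z-Z')\qquad\text{for all }u\in H,$$
i.e. $Z\otimes Z-Z'\otimes Z'=(Z-Z')\otimes Z+Z'\otimes(Z-Z')$. The two new integrands are Bochner integrable in $\trace$ because, by Cauchy--Schwarz,
$$\mathbb{E}\big\|(Z-Z')\otimes Z\big\|_1=\mathbb{E}\big[\|Z-Z'\|\,\|Z\|\big]\le\mathbb{E}^{1/2}\|Z-Z'\|^2\,\mathbb{E}^{1/2}\|Z\|^2<+\infty,$$
and analogously for $Z'\otimes(Z-Z')$.

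Taking expectations in the identity and using the triangle inequality together with the standard bound $\|\mathbb{E}[T]\|_1\le\mathbb{E}\|T\|_1$ for Bochner integrals, we get
$$\|B-B'\|_1\le\mathbb{E}\big[\|Z-Z'\|\,\|Z\|\big]+\mathbb{E}\big[\|Z'\|\,\|Z-Z'\|\big]\le\mathbb{E}^{1/2}\|Z-Z'\|^2\Big(\mathbb{E}^{1/2}\|Z\|^2+\mathbb{E}^{1/2}\|Z'\|^2\Big),$$
where the last inequality is again Cauchy--Schwarz applied termwise. This is exactly the claim.

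The only subtlety is the bookkeeping with vector-valued integration: one must verify that the integrands genuinely live in $\trace$, that they are Bochner integrable there (which is precisely the finiteness established via Cauchy--Schwarz), and that the $\trace$-valued integral coincides with the operator defined slotwise by $\mathbb{E}[\scl{u}{Z}Z]$ --- all of which follow from separability of $H$ and the fact that bounded linear maps commute with the Bochner integral. If one prefers to avoid $\trace$-valued integrals altogether, the same bound follows from the duality formula $\|A\|_1=\sup\{\mathsf{Tr}(CA):C\in\cmpts,\ \opnorm{C}\le1\}$ recalled in Section~\ref{subsec:linear_operators}: for a fixed contraction $C$ one has $\mathsf{Tr}(C(B-B'))=\mathbb{E}[\scl{CZ}{Z}-\scl{CZ'}{Z'}]$, and splitting this as above bounds it by $\opnorm{C}\,\mathbb{E}[\|Z-Z'\|\,\|Z\|]+\opnorm{C}\,\mathbb{E}[\|Z'\|\,\|Z-Z'\|]$; taking the supremum over such $C$ gives the result.
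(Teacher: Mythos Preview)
Your proof is correct. The primary argument you give---viewing $B=\mathbb{E}[Z\otimes Z]$ as a Bochner integral in $(\trace,\|\cdot\|_1)$, splitting the integrand via $Z\otimes Z-Z'\otimes Z'=(Z-Z')\otimes Z+Z'\otimes(Z-Z')$, and applying $\|\mathbb{E}[T]\|_1\le\mathbb{E}\|T\|_1$ together with $\|a\otimes b\|_1=\|a\|\,\|b\|$---is a genuinely different and cleaner route than the paper's. The paper proceeds by the duality formula $\|A\|_1=\sup_{\opnorm{C}\le1}\mathsf{Tr}(CA)$: it fixes a contraction $C$ and an orthonormal basis $\{e_i\}$, expands $\mathsf{Tr}(C(B-B'))=\sum_i\mathbb{E}[\scl{e_i}{Z}\scl{CZ}{e_i}-\scl{e_i}{Z'}\scl{CZ'}{e_i}]$, splits this into two symmetric pieces, and controls each via Cauchy--Schwarz in $\ell^2$ and Parseval. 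Your alternative sketch at the end is exactly this duality argument, but stated more compactly (recognizing $\sum_i\scl{e_i}{Z}\scl{CZ}{e_i}=\scl{CZ}{Z}$ upfront). The Bochner-integral approach buys you a basis-free argument and avoids the interchange of sums and expectations; the paper's approach, by contrast, stays entirely within scalar estimates and does not require checking strong measurability or invoking the Bochner machinery.
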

\begin{proof}
    Let $C\in\cmpts$ with $\opnorm{C}\le 1$ and fix any orthonormal basis $\{e_i\}_i\subset H$. We have
    \begin{align*}
    \mathsf{Tr}(C(B-B'))=\sum_{i=1}^{+\infty}\scl{C(B-B')e_i}{e_i}&\le\sum_{i=1}^{+\infty}|\scl{CBe_i}{e_i}-\scl{CB'e_i}{e_i}|
    \\&=\sum_{i=1}^{+\infty}|\mathbb{E}\left[\scl{e_i}{Z}\scl{CZ}{e_i}\right]-\mathbb{E}\left[\scl{e_i}{Z'}\scl{CZ'}{e_i}\right]|.
    \end{align*}
    By applying the triangular inequality, we get
    \begin{align*}
    \mathsf{Tr}(C(B-B'))=\dots=&\sum_{i=1}^{+\infty}|\mathbb{E}\left[\scl{e_i}{Z}\scl{CZ}{e_i}\right]-\mathbb{E}\left[\scl{e_i}{Z}\scl{CZ'}{e_i}\right]|+\\&\sum_{i=1}^{+\infty}|\mathbb{E}\left[\scl{e_i}{Z}\scl{CZ'}{e_i}\right]-\mathbb{E}\left[\scl{e_i}{Z'}\scl{CZ'}{e_i}\right]|.
    \end{align*}
    Notice that the two terms are symmetric in $Z$ and $Z'$, thus, it suffices to study one of the two. For instance, focusing on the first one yields
    \begin{multline*}
    \sum_{i=1}^{+\infty}|\mathbb{E}\left[\scl{e_i}{Z}\scl{CZ}{e_i}\right]-\mathbb{E}\left[\scl{e_i}{Z}\scl{CZ'}{e_i}\right]|=\\=\sum_{i=1}^{+\infty}|\mathbb{E}\left[\scl{e_i}{Z}\scl{C(Z-Z')}{e_i}\right]|
    \le\sum_{i=1}^{+\infty}\mathbb{E}^{1/2}|\scl{e_i}{Z}|^2\mathbb{E}^{1/2}|\scl{C(Z-Z')}{e_i}|^2    \le\\
    \le\left(\sum_{i=1}^{+\infty}\mathbb{E}|\scl{e_i}{Z}|^2\right)^{1/2}\left(\sum_{i=1}^{+\infty}\mathbb{E}|\scl{C(Z-Z')}{e_i}|^2\right)^{1/2}=
    \\
    = \left(\mathbb{E}\sum_{i=1}^{+\infty}|\scl{e_i}{Z}|^2\right)^{1/2}\left(\mathbb{E}\sum_{i=1}^{+\infty}|\scl{C(Z-Z')}{e_i}|^2\right)^{1/2}=
    \\
    = \mathbb{E}^{1/2}\|Z\|^2\cdot\mathbb{E}^{1/2}\|C(Z-Z')\|^2.
    \end{multline*}
    Since $\|C(Z-Z')\|\le\opnorm{C}\cdot\|Z-Z'\|\le\|Z-Z'\|$, it follows that
    $$
    \sum_{i=1}^{+\infty}|\mathbb{E}\left[\scl{e_i}{Z}\scl{CZ}{e_i}\right]-\mathbb{E}\left[\scl{e_i}{Z}\scl{CZ'}{e_i}\right]|\le\dots\le\mathbb{E}^{1/2}\|Z\|^2\cdot\mathbb{E}^{1/2}\|Z-Z'\|^2,
    $$
    and thus $\mathsf{Tr}(C(B-B'))\le \left(\mathbb{E}^{1/2}\|Z\|^2 + \mathbb{E}^{1/2}\|Z'\|^2\right)\mathbb{E}^{1/2}\|Z-Z'\|^2.$ Passing at the supremum over $C$ yields the conclusion.
\end{proof}

\section{Proofs of Section \ref{sec:preliminaries}}
\label{appendix:classical-proofs}

\paragraph{Complementary proof to Lemma \ref{lemma:svd}}
    Let \review{$A\in\cmpts(H_1,H_2)$} be a compact operator \review{between two separable Hilbert spaces}. \review{Then, by classical results, see, e.g., \cite[Theorem VI.17]{reed1980methods} and \cite[Exercise VI.47]{reed1980methods}, one has
    $$A=\sum_{i=n+1}^{+\infty}\sigma_i(A)\scl{\cdot}{u_i}_{H_1}v_i,$$
    for some orthonormal basis $\{u_i\}_i\subset H_1$ and $\{v_i\}_i\in\subset H_2.$ To complete the proof of the Lemma, we aim to prove} 
    that the truncated series $A_n=\sum_{i=1}^{n}\sigma_i(A)\scl{\cdot}{u_i}v_i$ yields the best $n$-rank approximation of $A$ in both the operator norm and the Hilbert-Schmidt norm. Finally, \review{we shall also prove that, in the Hilbert-Schmidt case,} $A_n$ is the unique best approximation of $A$ whenever $\sigma_{n}(A)>\sigma_{n+1}(A)$ strictly. \review{For simplicity, we limit this part of the proof to the case $H_1=H_2$. We shall then write $H$ for the Hilbert space and $\|\cdot\|$ for its norm.}
    
    \review{Starting with} the operator norm, simply notice that for every $x\in H$ one has
    \begin{equation}  
    \|(A-A_n)(x)\|^2=\sum_{i=n+1}^{+\infty}\sigma_i(A)^2|\scl{x}{u_i}|^2\le\sigma_{n+1}(A)^2\sum_{i=n+1}^{+\infty}|\scl{x}{u_i}|^2=\sigma_{n+1}(A)^2\|x\|^2,
  \end{equation}
    due monotonicity of the singular values. It follows that
    $$\opnorm{A-A_n}\le\sigma_{n+1}(A)=\inf\left\{\opnorm{A-L}\;:\;L\in\hilbs,\;\rank(L)\le n\right\},$$
    as claimed.
    Let us now discuss the case of the Hilbert-Schmidt norm. Notice that $L\in\cmpts(H)$ with $\rank(L)\le n$ implies $L\in\hilbs(H)$. In particular, if $A\notin\hilbs(H)$ then \eqref{eq:hs-minimizer} is obvious since $A-L\notin\hilbs(H)$ and thus $\hsnorm{A-L}=+\infty$, coherently with the fact that $\sum_{i>n}\sigma_i(A)^2$ must diverge. Instead, assume that $A\in\hilbs(H)$. We can leverage a Von Neumann type inequality, which states that for any $B\in\hilbs(H)$ one has
    $\textsf{Tr}(AB^*)\le\sum_{i}\sigma_{i}(A)\sigma_{i}(B)$, cf. \cite{dirr2019neumann}. As a direct consequence, see, e.g., \cite[Corollary 7.4.1.3]{horn2012matrix},
    $$\hsnorm{A-B}^2\ge\sum_{i=1}^{+\infty}|\sigma_i(A)-\sigma_i(B)|^2.$$
    We now notice that if $\rank(B)\le n$, then $\sigma_i(B)=0$ for all $i\ge n+1$, as clearly seen by expanding $B$ in its canonical form. It follows that, in this case,
    $$\hsnorm{A-B}^2\ge\sum_{i=1}^{n}|\sigma_i(A)-\sigma_i(B)|^2+\sum_{i>n}\sigma_i(A)^2\ge\sum_{i>n}\sigma_i(A)^2=\hsnorm{A-A_n}^2,$$
    as claimed. The above also shows that any $n$-rank operator $B$ minimizing $\hsnorm{A-B}$ must satisfy $\sigma_i(B)=\sigma_i(A)$ for all $i=1,\dots,n.$ In particular, $B$ and $A_n$ must share the same singular values. In light of this fact, let $B=\sum_{i=1}^{n}\sigma_i(A)\scl{\cdot}{\tilde u_i}\tilde v_i$. We have
    $$\hsnorm{A-B}^2=\hsnorm{A}^2+\hsnorm{B}^2-2\textsf{Tr}(B^*A)=\sum_{i=1}^{+\infty}\sigma_i(A)^2+\sum_{i=1}^{n}\sigma_i(A)^2-2\textsf{Tr}(B^*A).$$
    In particular, $B$ must be such that $\textsf{Tr}(B^*A)$ is maximized. To this end, let $\{\tilde u_{j}\}_{j=n+1}^{+\infty}$ be an orthonormal basis for $\tilde U^\top$, where $\tilde U:=\text{span}(\{\tilde u_j\}_{j=1}^{m})$, so that $\{\tilde u_j\}_{j=1}^{+\infty}$ forms an orthonormal basis of $H.$ Since $B\tilde u_j=0$ for all $j>n$, we have,
    $$\textsf{Tr}(B^*A)=\sum_{i=1}^{n}\scl{A\tilde u_i}{B\tilde u_i}=\sum_{i=1}^{n}\sigma_i(A)\scl{A\tilde u_i}{\tilde v_i}\le\sqrt{\sum_{i=1}^{n}\sigma_{i}^2(A)}\sqrt{\sum_{i=1}^{n}|\scl{A\tilde u_i}{\tilde v_i}|^2}.$$
    It follows that
    $$\textsf{Tr}(B^*A)\le \sqrt{\sum_{i=1}^{n}\sigma_{i}^2(A)}\le \sqrt{\sum_{i=1}^{n}\|A\tilde u_i\|^2}\le \sum_{i=1}^{n}\sigma_{i}^2(A),$$
    the last inequality coming from Corollary \ref{corollary:Auj}. On the other hand, the upper-bound is reached whenever $B=A_n$, thus, if $B$ is actually a maximizer of $\textsf{Tr}(B^*A)$, the above needs to be an equality. In turn, this implies that $\tilde U= U = \text{span}(\{u_i\}_{i=1}^{n})$, once again by Corollary \ref{corollary:Auj}. In particular, since $A$ and $A_n$ coincide on $U$, we may re-write $\textsf{Tr}(B^*A)$ as
    $$\textsf{Tr}(B^*A)=\dots=\sum_{i=1}^{n}\sigma_i(A)\scl{A\tilde u_i}{\tilde v_i}=\sum_{i=1}^{n}\sigma_i(A_n)\scl{A_n\tilde u_i}{\tilde v_i}=\dots=\textsf{Tr}(B^*A_n).$$
    On the other hand, due minimality (resp. maximality) it must be
    $$\textsf{Tr}(B^*A_n)=\textsf{Tr}(B^*A)=\sum_{i=1}^{n}\sigma_i(A)^2=\hsnorm{A_n}^2$$
    Now recall that $\hsnorm{B}=\hsnorm{A_n}=:\rho$. Since $\hilbs$ is a Hilbert space, due uniform convexity, there is a unique element in $\{C\in\hilbs\;\text{with}\;\hsnorm{C}\le \rho\}$ maximizing $\scl{C}{A_n}_{\textnormal{HS}}=\textsf{Tr}(C^*A_n)$, which is precisely $A_n.$ It follows that $B=A_n$, as claimed.\qed
    %

\paragraph{Proof of Lemma \ref{lemma:pod}}
Let $(H,\|\cdot\|)$ be a separable Hilbert space and let $X$ be a square-integrable $H$-valued random variable with a given probability law $\mathbb{P}$. Let $B:H\to H$ be the linear operator
    $B:u\mapsto \mathbb{E}\left[\scl{u}{X}X\right].$ We prove the following.

\begin{itemize}
    \item [i)] $B\in\trace$ in a symmetric positive semidefinite trace class operator.

    \begin{proof}
    We notice that, for all $u,u'\in H$, we have
    $$\langle B(u), u'\rangle = \langle\expe[\langle X, u\rangle X],u'\rangle=\expe[\langle X, u\rangle \langle X,u'\rangle] = \langle u, B(u')\rangle.$$
    In particular, for $u'=u$, $\langle B(u), u\rangle=\expe|\scl{X}{u}|^2\ge0.$ Thus, having fized any orthonormal basis $\{e_i\}_i\subset H$, we may compute $\|B\|_1$ as
    \begin{equation*}
    \sum_{i=1}^{+\infty}\langle B(e_{i}),e_{i}\rangle=\sum_{i=1}^{+\infty}\expe[\langle X, e_{i}\rangle^{2}]=\expe\left[\sum_{i=1}^{+\infty}\langle X, e_{i}\rangle^{2}\right]=\expe\|X\|^{2}<+\infty.\tag*{\qedhere}\end{equation*}\end{proof}
    \item [ii)]     
    There exists a sequence of (scalar) random variables $\{\eta_{i}\}_{i=1}^{+\infty}$ with $\expe[\eta_i\eta_j]=\delta_{i,j}$ such that
        $X=\sum_{i=1}^{+\infty}\sqrt{\lambda_i}\eta_iv_i$
        $\mathbb{P}$-almost surely, where $\lambda_1\ge\lambda_2\ge\dots\ge0$ and $v_i\in H$ are the eigenvalues and eigenvectors of $B$, respectively.
    \begin{proof}
    Let $\eta_{i}:=\langle X,v_{i}\rangle/\sqrt{\lambda_{i}}.$
    It is straightforward to see that for all $i,j\in\mathbb{N}$ one has 
    \begin{align*}
    \expe[\eta_{i}\eta_{j}]=\frac{1}{\sqrt{\lambda_{i}\lambda_{j}}}\expe[\langle X, v_{i}\rangle\langle X, v_{j}\rangle]=\frac{1}{\sqrt{\lambda_{i}\lambda_{j}}}\langle B(v_{i}),v_{j}\rangle=\frac{\lambda_{i}}{\sqrt{\lambda_{i}\lambda_{i}}}\langle v_{i},v_{j}\rangle=\delta_{i,j}.
    \end{align*}
    Finally, it is clear that $X=\sum_{i=1}^{+\infty}\sqrt{\lambda_{i}}\eta_{i}v_{i}$ by definition.\end{proof}
    \item[iii)]
    For every orthogonal projection $P:H\to H$ one has $\mathbb{E}\|X-PX\|^2=\mathbb{E}\|X\|^2-\sum_{i=1}^{+\infty}\lambda_i\|Pv_i\|^2.$
    \begin{proof}
    Let $P:H\to H$ be an orthogonal projection and let $V:=P(H)$. Let $\{w_j\}_{j\in J}$ be an orthonormal basis of $V$, be it finite or infinite depending on the dimension of $V\subseteq H.$ Due orthogonality,
    $$\mathbb{E}\|X-PX\|^2=\mathbb{E}\|X\|^2-\mathbb{E}\|PX\|^2.$$
    Expanding the second term, due $\mathbb{E}$-orthonormality of the $\eta_{i}$ and orthonormality of the $w_j$'s, reads,
        \begin{multline*}
        \mathbb{E}\|PX_\gparam\|^2=\mathbb{E}\left[\sum_{j\in J}|\scl{w_j}{X}|^2\right]=\sum_{j\in J}\mathbb{E}\left|\sum_{i=1}^{\infty}\eta_{i}\sqrt{\lambda_i}\scl{w_j}{v_i}\right|^2=\\=\sum_{j\in J}\sum_{i=1}^{+\infty}\lambda_i|\scl{w_j}{v_i}|^2=\sum_{i=1}^{+\infty}\lambda_i\sum_{j\in J}|\scl{w_j}{v_i}|^2=\sum_{i=1}^{+\infty}\lambda_i\|Pv_i\|^2.\tag*{\qedhere}\end{multline*}
    \end{proof}
    \item [iv)] For every $n\in\mathbb{N}_+$ the random variable $X_n:=\sum_{i=1}^{n}\sqrt{\lambda_i}\eta_iv_i$ satisfies
        $\mathbb{E}\|X-X_n\|^2=\inf_{Z\in Q_n}\mathbb{E}\|X-Z\|^2,$
        where $Q_n=\{Z\in L^{2}_H\;:\;\exists V\subseteq H,\;\dim(V)\le n,\;Z\in V\;\mathbb{P}\text{-almost surely}\}.$
    \begin{proof}
    Fix any $Z\in Q_n.$ Let $V\subseteq H$ be a subspace of dimension $n$ such that $Z\in V$ $\mathbb{P}$-almost surely. Let $P:H\to V$ denote the orthogonal projection onto $V$. Due optimality of orthogonal projections, we have
    $$\mathbb{E}\|X-Z\|^2\ge\mathbb{E}\|X-PX\|^2=\mathbb{E}\|X\|^2-\sum_{i=1}^{+\infty}\lambda_i\|Pv_i\|^2,$$
    the equality following from point (iii). Now, let $a_i:=\|Pv_i\|^2$, and let $\{w_j\}_{j=1}^{n}$ be an orthonormal basis for $V$. Without loss of generality, extend the latter to a complete orthonormal basis, $\{w_j\}_{j=1}^{+\infty}$, spanning the whole $H$. We notice that $0\le a_i\le 1$ and $$\sum_{i=1}^{+\infty}a_i=\|P\|_{\textnormal{HS}}^2=\sum_{j=1}^{+\infty}\|Pw_j\|^2=\sum_{j=1}^{n}\|w_j\|^2=n.$$
    Therefore, as a direct consequence of Lemma \ref{lemma:truncation}, 
    $\mathbb{E}\|X-Z\|^2\ge\dots\ge\mathbb{E}\|X\|^2-\sum_{i=1}^{n}\lambda_i=\sum_{i=1}^{+\infty}\lambda_i-\sum_{i=1}^{n}\lambda_i=\sum_{i=n+1}^{+\infty}\lambda_i.$
    Since $\mathbb{E}\|X-X_n\|^2$ is easily shown to equal $\sum_{i=n+1}^{+\infty}\lambda_i$, the conclusion follows.
    \end{proof}
\end{itemize}

\paragraph{Proof of Theorem \ref{theorem:argmin}}
\review{Let $(\gspace,d_{\gspace})$ and $(C,d_{C})$ be Polish spaces, $C$ compact, and let $J:\gspace\times C\to \mathbb{R}$ be: \emph{(i)} lower semi-continuous in $(\gparam,c)$; \emph{(ii)} marginally continuous in $\gparam$. We aim to prove the existence of a Borel measurable map $c_*:\gspace\to C$ such that
    $$J(\gparam,c_*(\gparam))=\min_{c\in C}J(\gparam,c)\quad\quad\forall \gparam\in \gspace.$$
We will then show that $c_*$ is continuous if the parametrized minimization problem admits a unique solution for every $\gparam\in\gspace.$}

\review{Starting with the measurability, let us define the map $F:\gspace\to 2^{C}$ 
$$F:\gparam\mapsto\left\{c\in C\;\text{such that}\;J(\gparam,c)=\min_{c'\in C}J(\gparam,c')\right\},$$
mapping every $\gparam\in\gspace$ onto a suitable subset of $C$. We shall prove that
\begin{itemize}
    \item[(i)] for every $\gparam\in \gspace$, the set $F(\gparam)\subseteq C$ is closed and not empty;
    \item[(ii)] for every open set $A\subseteq C$, the set
$$\mathcal{S}_{A}:=\{\gparam\in \gspace\;:\;F(\gparam)\cap A\neq\emptyset\}$$
is Borel measurable.
\end{itemize}
In order to prove the above, for every $\gparam\in \gspace$, let us denote by $j_\gparam$ the map from $C\to\mathbb{R}$ defined as $j_\gparam(c):=J(\gparam,c).$ By hypothesis, $j_\gparam$ is lower-semicontinuous as a function of $c\in C$. Then, (i) is trivial. In fact, it is well known that every lower-semicontinuous map defined over a compact metric space admits a minimum. In particular, if $\alpha_{\gparam}:=\min_{c'\in C}J(\gparam,c'),$ then
$$F(\gparam)=j_\gparam^{-1}\left(\left\{\alpha_x\right\}\right)=
j_\gparam^{-1}\left(-\infty,\;\alpha_\gparam\right],$$
is closed (by lower-semicontinuity of $j_\gparam$) and nonempty (by minimality). As for the second statement, instead, let us first consider the case in which $A\subset C$ is compact. We aim to prove that $\mathcal{S}_A$ is closed, and thus Borel measurable. To this end, let $\{\gparam_{n}\}_{n}\subseteq \mathcal{S}_{A}$ be a sequence converging to some $\gparam\in \gspace$. By definition of $\mathcal{S}_{A}$, for each $\gparam_{n}$ there exists some  $c_{n}\in A$ such that $c_{n}\in F(\gparam_{n})$, i.e. for which $J(\gparam_{n},c_{n})=\min_{c'} J(\gparam_{n},c')$. Since $A$ is compact, up to passing to a subsequence, there exists some $c\in A$ such that $c_{n}\to c$. 
Let now $\tilde{c}\in C$ be a minimizer for $\gparam$, i.e. a suitable element for which $J(\gparam,\tilde{c})=\min_{c'\in C}J(\gparam,c')$. Since $J$ is lower semi-continuous but also continuous in its first argument,
\begin{equation*}    J(\gparam,c)\le \liminf_{n\to+\infty}J(\gparam_n,c_{n})=\liminf_{n\to+\infty}\min_{c' \in C}J(\gparam_{n},c')\le\liminf_{n\to+\infty}J(\gparam_{n},\tilde{c})=J(\gparam,\tilde{c}),
\end{equation*}
implying that $c$ is also a minimizer for $\gparam$. It follows that $c\in A\cap F(\gparam)$ and thus $\gparam\in \mathcal{S}_{A}$. In particular, $\mathcal{S}_{A}$ is closed. Finally, to see that (ii) holds notice that every open set $A\subseteq C$ can be written as the countable union of compact sets $A=\cup_{n\in\mathbb{N}}A_n$. Since $\mathcal{S}_A=\cup_{n\in\mathbb{N}}\mathcal{S}_{A_n}$, the conclusion follows.}

\review{Since both (i) and (ii) hold true, we notice that $F$ is in fact a \emph{measurable set-valued map}, see, e.g., \cite[Definition 8.1.1]{aubin2009set}. In particular, we are in the position to invoke the celebrated Kuratowski–Ryll-Nardzewski selection theorem, which ensures the existence of a measurable map $c_*:\gspace\to C$ such that $c_*(\gparam)\in F(\gparam)$, i.e. $J(\gparam,c_*(\gparam))=\min_{c\in C}J(\gparam,c)$: see, e.g., \cite[Theorem 8.1.3]{aubin2009set}. The first statement in Theorem \ref{theorem:argmin} is thus proven.}
\\\\
\review{Concerning the continuity, instead, consider the limit case $A=C$ in the previous argument. Then, $\mathcal{S}_A=\gspace$. Since each $\gparam$ now admits a unique minimizer $c_*(\gparam)\in C$, the above argument shows that for every sequence $\{\gparam_n\}_n\subseteq \gspace$ converging to some $\gparam\in\gspace$, the corresponding sequence of minimizers $\{c_*(\gparam_n)\}_{n}$ admits a subsequence $\{c_*(\gparam_{n_k})\}_k$ converging to a suitable minimizer of $J(\gparam,\cdot)$. Then, by uniqueness, it must be $c_*(\gparam_{n_k})\to c_*(\gparam).$ In other words, for every convergent sequence $\gparam_n\to\gparam$ in $\gspace$, there exists a subsequence such that $c_*(\gparam_{n_k})\to c_*(\gparam).$ It follows that $c_*$ is continuous.} 

\review{To see this, notice that $c_*^{-1}(K)$ is closed whenever $K\subseteq C$ is closed. In fact, if $\gparam_n\to\gparam$ with $\{\gparam_n\}_n\subseteq c_*^{-1}(K)$, then there exists a subsequence $\{c_*(\gparam_{n_k})\}_k\subseteq K$ with $c_*(\gparam_{n_k})\to c_*(\gparam)$. Since $K$ is closed, we have $c_*(\gparam)\in K$, meaning that $\gparam\in c_*^{-1}(K)$. Thus, $c_*^{-1}(K)$ is closed. Since $K$ was arbitrary, this shows that $c_*$ is continuous.}\qed

\end{appendices}

\end{document}